\def\Im{\mathop{\rm Im}\nolimits}
\def\Re{\mathop{\rm Re}\nolimits}
\def\Im{\mathop{\rm Im}\nolimits}
\def\Re{\mathop{\rm Re}\nolimits}
\def\R{\mathbb R}
\def\C{\mathbb C}
\def\N{\mathbb N}
\def\ds{\displaystyle}
\def\SG{\textbf{\textrm{SG}}}
\newcommand\dslash{d\llap {\raisebox{.9ex}{$\scriptstyle-\!$}}}
\newcommand{\beqsn}{\arraycolsep1.5pt\begin{eqnarray*}}
\newcommand{\eeqsn}{\end{eqnarray*}\arraycolsep5pt}
\newcommand{\beqs}{\arraycolsep1.5pt\begin{eqnarray}}
\newcommand{\eeqs}{\end{eqnarray}\arraycolsep5pt}
\newtheorem{theorem}{Theorem}
\newtheorem{lemma}{Lemma}
\newtheorem{proposition}{Proposition}
\newtheorem{definition}{Definition}
\newtheorem{remark}{Remark}
\renewcommand{\section}%
   {\setcounter{equation}{0}\@startsection {section}{1}{\z@}{-3.5ex plus -1ex
  minus -.2ex}{2.3ex plus .2ex}{\Large\bf}}
\title[$3$-evolution equations in Gelfand-Shilov spaces]{The Cauchy problem for $3$-evolution equations with data in Gelfand-Shilov spaces}
\author[A. Arias Junior]{Alexandre Arias Junior}
\address{ Department of Mathematics, Federal University of Paran\'a, CEP 81531-980, Curitiba, Brazil}
\email{arias@ufpr.br}
\author[A. Ascanelli]{Alessia Ascanelli}
\address{Dipartimento di Matematica ed Informatica\\Universit\`a di Ferrara\\
Via Machiavelli 30\\
44121 Ferrara\\
Italy}
\email{alessia.ascanelli@unife.it}
\author[M. Cappiello]{Marco Cappiello}
\address{Dipartimento di Matematica ``G. Peano'' \\Universit\`a di Torino\\
Via Carlo Alberto 10\\
10123 Torino\\
Italy}
\email{marco.cappiello@unito.it}
\thanks{The first author would like to thank Funda\c{c}\~ao Arauc\'aria for the financial support during the development of this paper. The second author has been supported by the Italian national research funds FFABR2017 and GNAMPA 2020. The third author is supported by the fund GNAMPA 2020.}
\begin{document}


\begin{abstract}
We consider the Cauchy problem for a third order evolution operator $P$ with $(t,x)$-depending coefficients and complex valued lower order terms. We assume the initial data to be Gevrey regular with an exponential decay at infinity, that is, the data belong to some Gelfand-Shilov space of type $\mathscr{S}$. Under suitable assumptions on the decay at infinity of the imaginary parts of the coefficients of $P$  
we prove the existence of a solution with the same Gevrey regularity of the data and we describe its behavior for $|x| \to\infty$. 
\end{abstract}

\maketitle

\noindent  \textit{2010 Mathematics Subject Classification}: 35Q41, 35S05, 46F05 \\

\noindent
\textit{Keywords and phrases}: $p$-evolution equations, Gelfand-Shilov spaces, infinite order pseudodifferential operators

\section{Introduction and main result}\label{section_introduction}
Let us consider for $(t,x) \in [0,T] \times \R$ the Cauchy problem in the unknown $u=u(t,x)$:
\begin{equation}\label{Cauchy_problem_in_introduction}
\begin{cases}
P(t, x, D_t, D_x) u(t,x)= f(t, x)	\\
u(0,x) = g(x)
\end{cases},
\end{equation}
where
\begin{equation}\label{diffP}
P(t, x, D_t, D_x) = D_t + a_p(t)D^{p}_{x} + \displaystyle\sum_{j=0}^{p-1}a_j(t,x)D^{j}_{x},
\end{equation}
with $D=\frac1i\partial,$ $p\geq 2$, $a_p\in C([0,T],\R), a_p(t) \neq 0$ for $t \in [0,T]$, and $a_j \in C([0,T],C^\infty(\R;\C))$, $j = 0, \ldots, p-1$. The operator $P$ is known in literature as $p-$evolution operator, cf. \cite{Mizo}, and $p$ is the evolution degree. The well posedness of \eqref{Cauchy_problem_in_introduction}, \eqref{diffP} has been investigated in various functional settings for arbitrary $p$, cf.\cite{ascanelli_chiara_zhanghirati_well_posedness_of_cauchy_problem_for_p_evo_equations, ascanelli_chiara_zanghirati_necessary_condition_for_H_infty_well_posedness_of_p_evo_equations, ascanelli_cappiello_wheigted_energy_estimates_p_evolution_equations_SG_classes, cicognani_colombini_cauchy_problem_for_p_evolution_equations}. Further results concern special values of $p$ which correspond to classes of operators of particular interest in Mathematical Physics, cf. \cite{ascanelli_cappiello_schrodinger_equations_Gelfand_shillov, ACR, baba_H_infty_well_posedness_schrodinger_equations, CRJEECT, ichinose_remarks_cauchy_problem_schrodinger_necessary_condition, kajitani_baba_cauchy_problem_schrodinger_equations} for the case $p=2$ and \cite{AAC3evolGevrey} in the case $p=3$. 
The condition that $a_p$ is real valued means that the principal symbol of $P$ (in the sense of Petrowski)  has the real characteristic $\tau=-a_p(t)\xi^p$;
this guarantees that the operator \eqref{diffP} satisfies the assumptions of Lax-Mizohata theorem.
The presence of complex valued coefficients in the lower order terms of \eqref{diffP} plays a crucial role in the analysis of the problem \eqref{Cauchy_problem_in_introduction} in all the above mentioned papers. In fact, 
when the coefficients $a_j(t,x)$, $j = 0, \ldots, p-1,$ are real valued and of class $\mathcal B^\infty$ with respect to $x$ (uniformly bounded together with all their $x$-derivatives),  it is well known that the problem (\ref{Cauchy_problem_in_introduction}) is well-posed in $L^2(\R)$ (and in $L^2$-based Sobolev spaces $H^{m}$, $m\in\R$). On the contrary if any of the coefficients $a_j(t,x)$ are complex valued, then in order to obtain well-posedness either in $L^2(\R)$, or in $H^{\infty}(\R)=\cap_{m\in\R}H^m(\R)$, some decay conditions at infinity on the imaginary part of the coefficients $a_j$ are needed
(see \cite{ascanelli_chiara_zanghirati_necessary_condition_for_H_infty_well_posedness_of_p_evo_equations}, \cite{ichinose_remarks_cauchy_problem_schrodinger_necessary_condition}). 

Sufficient conditions for well-posedness in $L^2$ and $H^\infty$ have been given in \cite{baba_H_infty_well_posedness_schrodinger_equations} and\cite{kajitani_baba_cauchy_problem_schrodinger_equations} for the case $p=2$, in \cite{ascanelli_chiara_zhanghirati_well_posedness_of_cauchy_problem_for_p_evo_equations} for larger $p$.
Considering the Cauchy problem \eqref{Cauchy_problem_in_introduction} in the framework of weighted Sobolev-Kato spaces $H^m= H^{(m_1,m_2)}$, with $m=(m_1,m_2) \in \R^2$, defined as
\begin{equation} \label{KatoSobolev}
	H^{m}(\R)=\{u \in \mathscr{S}'(\R) : \langle x \rangle^{m_2} \langle D_{x} \rangle^{m_1} u \in L^{2}(\R) \},
	\end{equation}
where the analysis of regularity and decay/growth of the solution can be performed simulta\-ne\-ous\-ly, and assuming the coefficients $a_j$ to be polynomially bounded, the second and the third author obtained in \cite{ascanelli_cappiello_wheigted_energy_estimates_p_evolution_equations_SG_classes} well-posedness also in the Schwartz space $\mathscr{S}(\R)$ of smooth and rapidly decreasing functions and in the dual space $\mathscr{S}'(\R)$. We recall that $\mathscr{S}(\R) = \cap_{m \in \R^2} H^{m}(\R)$ and $\mathscr{S}'(\R) = \cup_{m \in \R^2} H^{m}(\R)$.
 In short, the above mentioned results can be summarized as follows: if 
\begin{equation}\label{genericdecay}
|\partial^{\beta}_{x} \Im a_{j}(t,x)| \leq C_{\beta} \langle x \rangle^{-\frac{j}{p-1}\sigma - |\beta|}, \quad (t,x) \in [0,T]\times \R,\, \beta \in \N_{0},\ C_\beta>0, j=0,\ldots, p-1,
\end{equation}
the problem \eqref{Cauchy_problem_in_introduction} is well-posed in:
\\
- $L^{2}(\R)$, $H^{m}(\R)$ for every $m \in\R^2$ when $\sigma > 1$;
\\
- $H^{\infty}(\R)$, $\mathscr{S}(\R)$  
when $\sigma = 1$.
In general, a finite loss of regularity of the solution with respect to the initial data is observed in the case $\sigma=1$.

\medskip
\noindent
Now we want to consider the case when an estimate of the form \eqref{genericdecay} for $j=p-1$ holds for some $\sigma \in (0,1).$
In this situation, there are no results in the literature for $p$-evolution operators of arbitrary order. In \cite{CRJEECT, kajitani_baba_cauchy_problem_schrodinger_equations} the case $p=2$, which corresponds to Schr\"odinger-type equations, is considered assuming $0<\sigma<1$ and $C_\beta = C^{|\beta|+1} \beta!^{s_0}$ for some $s_0 \in (1, 1/(1-\sigma))$ in \eqref{genericdecay}. The authors find well posedness results in certain Gevrey spaces of order $\theta$ with $s_0 \leq \theta<1/(1-\sigma)$, namely in the class
 $$\mathcal{H}^\infty_\theta= \bigcup_{\rho >0}H^m_{\rho;\theta},\quad  H^m_{\rho;\theta} := \{u \in L^2\vert\  e^{\rho \langle D \rangle^{1/\theta}}u \in H^m \},\, m\in\R.$$
In both papers, starting from data $f,g$ in $H^m_{\rho;\theta}$ for some $\rho>0$ the authors obtain a solution in $H^m_{\rho-\delta;\theta}$ for some $\delta >0$ such that $\rho-\delta >0.$ This means a sort of loss of regularity in the constant $\rho$ which rules the Gevrey behavior. We also notice that the condition $s_0 \leq \theta <1/(1-\sigma)$ means that the rate of decay of the coefficients of $P$ imposes a restriction on the spaces $\mathcal{H}^\infty_\theta$ in which the problem \eqref{Cauchy_problem_in_introduction} is well posed. Finally, the case $\theta >s_0=1/(1-\sigma)$ is investigated in \cite{ACR} where the authors prove that a decay condition as $|x|\to\infty$  on a datum in $H^m$, $m\geq 0$, produces a solution with (at least locally) the same regularity as the data, but with a different behavior at infinity.
In the recent paper \cite{ascanelli_cappiello_schrodinger_equations_Gelfand_shillov} the role of data with exponential decay on the regularity of the solution has been also analyzed for 2-evolution equations in arbitrary space dimension, in the frame of Gelfand-Shilov type spaces which can be seen as the global counterpart of classical Gevrey spaces, cf. Subsection \ref{section_gelfand_shilov_spaces}. In particular, it is proved that starting from data with an exponential decay at infinity, we can find a solution with the same Gevrey regularity of the data but with a possible exponential growth at infinity in $x$. Moreover, this holds for every $\theta >s_0$.  Finally, the result in \cite{ascanelli_cappiello_schrodinger_equations_Gelfand_shillov} is proved under the more general assumption with respect to \eqref{genericdecay} 
that the coefficients $a_1,\ a_0$ may admit an algebraic growth at infinity, namely
\begin{eqnarray} \label{alggrowth}
|\partial^{\beta}_{x} \Im a_1(t,x)| &\leq& C^{|\beta|+1} \beta!^{s_0} \langle x \rangle^{-\sigma - |\beta|},
\\ \label{alggrowth2}
 |\partial^{\beta}_{x} \Re a_1(t,x)| +|\partial^{\beta}_{x} a_0(t,x)| &\leq& C^{|\beta|+1} \beta!^{s_0} \langle x \rangle^{1-\sigma - |\beta|}.
 \end{eqnarray}

Recently, we started to consider the case $p=3$ in a Gevrey setting in one space dimension under assumption \eqref{genericdecay} with $j=p-1$ taking $\sigma \in (0,1)$. This case is of particular interest because linear $3$-evolution equations can be regarded as linearizations of relevant physical semilinear models like KdV and KdV-Burgers equation and their generalizations, see for instance \cite{JM, Kato, KdV, KKN,TMI}. There are some results  concerning KdV-type equations  with coefficients not depending on $(t,x)$ in the Gevrey setting, see \cite{GHHP,Goubet,hhp}. Our aim is to treat the more general case of variable coefficients. The present paper and \cite{AAC3evolGevrey} are devoted to establish the linear theory which is a preliminary step to treat the semilinear case. In a future paper we shall consider the case when the coefficients $a_j$ may depend also on $u$ following the approach developed in \cite{ABtame} in the $H^\infty$ setting.
\\
\indent
Also for the case $p=3$, assuming a condition of the form \eqref{genericdecay} with $j=2$ on the term $a_2$ for some $\sigma \in (0,1)$, namely \beqs\label{c}|\partial_x^\beta\Im a_2(t,x)|\leq C_\beta\langle x\rangle^{-\sigma-\beta},\eeqs is enough to lose in general well posedness both in $H^\infty$ and in $\mathscr{S}$, since the necessary condition for $H^\infty$ well-posedness 
\begin{equation}\label{neccond}\sup_{x\in\R}\min_{0\leq\tau\leq t\leq T}\int_{-\varrho}^\varrho 
\Im a_{2}(t,x+3 a_3(\tau)\theta)d\theta\leq M\log(1+\varrho)+N,\qquad 
\forall \varrho>0,\; {\rm for\ some\ } M,N>0 \end{equation}
proved in \cite{ascanelli_chiara_zanghirati_necessary_condition_for_H_infty_well_posedness_of_p_evo_equations} is no more satisfied. Namely, well posedness in $H^\infty$ or in $\mathscr{S}$ may fail due to an infinite loss of regularity or of decay. To give an idea of the latter phenomenon, consider the following initial value problem
	\begin{equation}\label{CPexample} \begin{cases} D_t u+D^3_x u +a_2(t,x)D_x^2 u +a_1(t,x)D_x u +a_0(t,x)u =0 \\ u(0,x) = e^{-\langle x \rangle^{1-\sigma}} \end{cases}, \qquad (t,x) \in [0,T]\times \R, \end{equation}
	where $$a_2(t,x)= i(t-1)(1-\sigma) x \langle x \rangle^{-\sigma-1},$$
	$$a_1(t,x)= 2(t-1)(1-\sigma) [\langle x \rangle^{-\sigma-1}-(\sigma+1)x^2 \langle x \rangle^{-\sigma-3}],$$
	$$a_0(t,x)= i \langle x \rangle^{1-\sigma}+i(t-1)(1-\sigma^2)[3x \langle x \rangle^{-\sigma-3}-(\sigma+3)x^3 \langle x \rangle^{-\sigma-5}].$$
	Notice that the coefficients $a_j$ are analytic and satisfy conditions \eqref{genericdecay} for $j=2$ and  \eqref{alggrowth}, \eqref{alggrowth2} for $j=0,1$. Moreover the initial datum belongs to $\mathscr{S}(\R)$ since $\sigma \in (0,1)$. It is easy to verify that the problem \eqref{CPexample} admits the solution
	$$u(t,x)= e^{(t-1)\langle x \rangle^{1-\sigma}} \notin C([0,T], \mathscr{S}(\R)),$$
	if $T \geq 1$. Analogously, $u \notin  C([0,T], H^\infty(\R))$. More precisely, we notice that the solution has the same regularity as the initial data but it grows exponentially for $|x| \to \infty$ when $t \geq 1.$ This motivates us to study the effect of an exponential decay of the data on the solution of \eqref{Cauchy_problem_in_introduction}.

In the recent paper \cite{AAC3evolGevrey} we proved a result of well posedness in the space $\mathcal{H}^\infty_{\theta}$ for the problem \eqref{Cauchy_problem_in_introduction} which extends to the case $p=3$ the results obtained in \cite{CRJEECT, kajitani_baba_cauchy_problem_schrodinger_equations} for the case $p=2$ (at least in one space dimension). As in the latter case, also in \cite{AAC3evolGevrey} a loss of regularity in the index $\rho$ appears. However, the previous example suggests that this loss can be avoided assuming the initial data to admit a suitable exponential decay. The price to pay is a considerable loss of decay which may produce solutions admitting an exponential growth.  
In view of the considerations above, it is quite natural to analyze the problem \eqref{Cauchy_problem_in_introduction} when the initial data belong to Gelfand-Shilov spaces, cf. Subsection 2.1 for the definition.

In order to state our main result we need to recall the definition of Gevrey type $\SG$-symbol classes and of Gelfand-Shilov Sobolev spaces.

Given $\mu, \nu \geq 1$, $m = (m_1, m_2) \in \R^{2}$, we denote by $\SG^{m_1,m_2}_{\mu, \nu}(\R^{2})$ (or by $\SG^{m}_{\mu, \nu}(\R^{2})$) the space of all functions $p \in C^\infty(\R^{2})$ for which there exist $C,C_1 > 0$ such that 
	$$
	|\partial^{\alpha}_{\xi} \partial^{\beta}_{x}p(x,\xi)| \leq C_1 C^{|\alpha+\beta|}\alpha!^{\mu}\beta!^{\nu} \langle \xi \rangle^{m_1 -|\alpha|} \langle x \rangle^{m_2 - |\beta|}, \quad x,\xi \in \R, \alpha, \beta \in \N,$$
see also Definition \ref{definition_SG_symbol_classe_finite_order}. In the case $\mu=\nu$ we write  $\SG^{m_1,m_2}_{\mu}(\R^{2})$ instead of $\SG^{m_1,m_2}_{\mu, \mu}(\R^{2})$. In the following we shall obtain our results via energy estimates, hence we need to introduce the Gelfand-Shilov-Sobolev spaces
$H^{m}_{\rho;s,\theta}(\R)$ defined, for $m = (m_1, m_2), \rho = (\rho_1, \rho_2)$ in $\R^{2}$ and $\theta, s > 1$, by
$$
H^{m}_{\rho; s,\theta} (\R) = \{ u \in \mathscr{S}'(\R) : \langle x \rangle^{m_2} \langle D \rangle^{m_1} 
e^{\rho_2 \langle x \rangle^{\frac{1}{s}}} e^{\rho_1 \langle D \rangle^{\frac{1}{\theta}}} u \in L^{2}(\R) \},
$$
where $e^{\rho_1 \langle D \rangle^{\frac{1}{\theta}}}$ is the Fourier multiplier with symbol $e^{\rho_1 \langle \xi \rangle^{\frac{1}{\theta}}}$. When $\rho = (0, 0)$ we recover the usual notion of weighted Sobolev spaces \eqref{KatoSobolev}.

\medskip
Our pseudodifferential approach allows to consider more general $3$-evolution operators of the form
\begin{equation}\label{equation_main_class_of_3_evolution_of_pseudo_diff_operators}
P(t, D_t, x, D_x) = D_t + a_3(t, D_x) + a_2(t, x, D_x) + a_1(t, x, D_x) + a_0(t, x, D_x),
\end{equation} 
$t \in [0, T],\ x \in \R,$ where $a_3(t,D_x)$ is a pseudodifferential operator with symbol $a_3(t,\xi)\in\R$ while, for $j=0,1,2$, $a_j(t,x,D_x)$ are pseudodifferential operators with symbols $a_j(t,x,\xi)\in\C$. 
Notice that \eqref{diffP} in the case $p=3$ is a particular case of \eqref{equation_main_class_of_3_evolution_of_pseudo_diff_operators}.
Our main result reads as follows.
\begin{theorem}\label{theorem_main_theorem}
	Let $P(t,  x, D_t, D_x)$ be an operator as in ($\ref{equation_main_class_of_3_evolution_of_pseudo_diff_operators}$) and assume the following:
\begin{itemize}
	\item[(i)] $a_3 \in C([0, T], \SG^{3, 0}_{1}(\R^2))$, $a_3$ is real valued and there are $C_{a_3}, R_{a_3} > 0$ and $\sigma \in (0,1)$  such that
	$$
	|\partial_{\xi} a_3(t, \xi)| \geq C_{a_3}|\xi|^{2}, \qquad \forall |\xi| \geq R_{a_3}, \,\, \forall t \in [0, T];
	$$
	\item[(ii)]  $\Re  \, a_2 \in C([0, T], \SG^{2, 0}_{1, s_0}(\R^2))$, $\Im \, a_2 \in C( [0, T], \SG^{2, -\sigma}_{1, s_0}(\R^2))$;
	\item[(iii)]  $\Re  \, a_1 \in C([0, T], \SG^{1, 1-\sigma}_{1, s_0}(\R^2))$, $\Im \, a_1 \in C( [0, T], \SG^{1, -\frac{\sigma}{2}}_{1, s_0}(\R^2))$;
	\item[(iv)]  $a_0 \in C([0, T], \SG^{0, 1-\sigma}_{1, s_0}(\R^2))$.
\end{itemize}
Let $s, \theta > 1$ such that $s_0 \leq s < \frac{1}{1-\sigma}$ and $\theta > s_0$. Let moreover $f\in C([0,T]; H^{m}_{\rho; s, \theta}(\R))$ and $g\in H^{m}_{\rho; s, \theta}(\R)$, where $m=(m_1,m_2),\rho=(\rho_1,\rho_2) \in \R^2$ and $\rho_2 > 0$. Then the Cauchy problem \eqref{Cauchy_problem_in_introduction}
	admits a solution $u\in C([0,T]; H^{m}_{(\rho_1,-\tilde{\delta}); s,\theta}(\R))$ for every $\tilde{\delta}>0$, which satisfies the following energy estimate
	\beqs\label{energy_estimate}
	\| u(t) \|^{2}_{H^{m}_{(\rho_1, -\tilde{\delta}); s, \theta}} \leq C \left( \| g \|^{2}_{H^m_{\rho; s, \theta}} + \int_{0}^{t} \| f (\tau) \|^{2}_{H^{m}_{\rho; s, \theta}} d\tau \right),
	\eeqs
	for all $t\in [0,T]$ and for some $C>0$.
\end{theorem}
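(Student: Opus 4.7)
The plan is to adapt to the Gelfand--Shilov setting for $p=3$ the conjugation scheme developed in \cite{ascanelli_cappiello_schrodinger_equations_Gelfand_shillov, AAC3evolGevrey}. Introducing a new unknown $v = e^{\Lambda(t,x,D_x)}u$ for a carefully chosen infinite-order symbol
\begin{equation*}
\Lambda(t,x,\xi) = \rho_1 \langle\xi\rangle^{1/\theta} + (\rho_2 + \kappa(t))\langle x\rangle^{1/s} + \lambda_2(t,x,\xi) + \lambda_1(t,x,\xi) + \lambda_0(t,x,\xi),
\end{equation*}
I would rewrite \eqref{Cauchy_problem_in_introduction} as an equivalent problem for $v$ of the form $(D_t + a_3(t,D_x) + Q)v = \tilde f$, $v(0) = \tilde g$, for which a straightforward $L^2$-energy estimate is available. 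The correction terms $\lambda_j$ are designed so that the commutators produced by $e^{\Lambda}$ with $a_3(t,D_x)$ absorb the ``non-$L^2$'' contributions of $\Im a_2$, $\Im a_1$, $a_0$ in $P$; the time-dependent factor $\kappa(t)$, decreasing from $0$ to some value $-\tilde\delta<0$, supplies extra dissipation through the term $\dot\kappa(t)\langle x\rangle^{1/s}$ coming from $\partial_t e^{\Lambda}$. Translating the $L^2$ estimate for $v$ back via $u = e^{-\Lambda}v$ will produce \eqref{energy_estimate} with the claimed loss $\rho_2 \mapsto -\tilde\delta$.

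The core analytic step is the explicit construction of the $\lambda_j$. Exploiting assumption (i), which gives $|\partial_\xi a_3(t,\xi)| \geq C_{a_3}|\xi|^2$ for $|\xi|$ large, one solves a cascade of transport equations of the schematic form
\begin{equation*}
(\partial_\xi a_3)(t,\xi)\,\partial_x \lambda_j(t,x,\xi) = c_j\,\Im a_j(t,x,\xi) + r_j(t,x,\xi), \qquad j=2,1,0,
\end{equation*}
with suitable constants $c_j\neq 0$, by integration in $x$; here the $r_j$ collect lower-order remainders produced at previous steps of the cascade. Hypotheses (ii)--(iv) are tailored precisely so that the resulting $\lambda_j$, after a cut-off in $|\xi|\geq R_{a_3}$, belong to the $\SG$ classes with $x$-order at most $1-\sigma$; since $s<1/(1-\sigma)$ one has $1-\sigma<1/s$, so each $\lambda_j$ is negligible compared to the exponential weight $(\rho_2+\kappa(t))\langle x\rangle^{1/s}$ and $e^{\Lambda}$ becomes a bona fide element of (a suitable variant of) the infinite-order class $\gam$. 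Invertibility of $e^{\Lambda}$ modulo regularizing remainders follows via a Neumann-type argument inside the calculus of formal series $\formal$, and the composition $e^{\Lambda}\circ P\circ e^{-\Lambda}$ can then be expanded asymptotically; its sub-principal imaginary part turns out to be nonnegative modulo $\dot\kappa(t)\langle x\rangle^{1/s}$ and lower-order bounded terms, so that the sharp G{\aa}rding inequality adapted to the $\SG$ classes, together with a suitable choice of $|\dot\kappa(t)|$ large, gives the desired $L^2$ estimate for $v$ by Gronwall's lemma.

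The main obstacle will be the infinite-order symbolic calculus underlying the construction of $e^{\Lambda}$ and its inverse: since $e^{\Lambda}$ is not pseudodifferential of any finite order, composition formulas must be developed inside a class of formal amplitudes such as $\pig$, with sharp remainder estimates tracking both the Gelfand--Shilov regularity index $s$ and the Gevrey index $\theta$ coming from $\rho_1\langle\xi\rangle^{1/\theta}$. One must ensure that every term produced by the conjugation $e^{\Lambda}\circ P\circ e^{-\Lambda}$, beyond the explicitly computed leading ones, lies in a class of nonpositive order in both $x$ and $\xi$, so as to be $L^2$-bounded; this requires a delicate interplay among the parameters $s$, $\theta$, $s_0$ and $\sigma$, and the quantitative hypotheses $s_0 \leq s < 1/(1-\sigma)$ and $\theta > s_0$ are precisely what make this bookkeeping consistent. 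The loss $\rho_2 \mapsto -\tilde\delta$ in \eqref{energy_estimate} then reflects the minimal ``budget'' of exponential decay consumed by the corrections $\lambda_j$ and by the dissipative factor $\kappa(t)$ over the time interval $[0,T]$.
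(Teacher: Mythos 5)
Your overall strategy — conjugate $iP$ by an infinite-order exponential built around the sign of $\partial_\xi a_3$, make the subprincipal symbol nonnegative, then apply G{\aa}rding-type inequalities and Gronwall — is indeed the one the paper follows. But there is a concrete gap in your choice of conjugating symbol $\Lambda$, and the paper deliberately avoids exactly this pitfall.

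You include the Gelfand--Shilov weight $(\rho_2+\kappa(t))\langle x\rangle^{1/s}$ as a summand of $\Lambda$. Then $e^{\Lambda}\circ ia_3(t,D)\circ e^{-\Lambda}$ produces, at subprincipal level, the real term $-(\rho_2+\kappa(t))\,\partial_\xi a_3(t,\xi)\,\partial_x\langle x\rangle^{1/s}$, whose symbol has $\xi$-order $2$, $x$-order $1/s-1$, and \emph{indefinite sign} (it depends on $\operatorname{sgn}(x\,\partial_\xi a_3)$). The only positive candidate to absorb it is $M_2|\partial_\xi a_3|\langle x\rangle^{-\sigma}$ coming from $\lambda_2$, which sits in $x$-order $-\sigma$. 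But the standing hypothesis $s<1/(1-\sigma)$ is equivalent to $1/s-1>-\sigma$, so for $|x|\to\infty$ the bad interaction term dominates and the order-$2$ positivity needed for Fefferman--Phong fails. You cannot repair this by enlarging $\lambda_2$ to $x$-order $1/s$ without destroying the perturbative picture, since then $e^{\lambda_2}$ itself consumes the full exponential budget $\rho_2$ rather than an arbitrarily small amount. The paper's fix is structural: the weight $\rho_2\langle x\rangle^{1/s}$ is kept \emph{only in the function spaces} $H^m_{\rho;s,\theta}$, never in the conjugating exponent. The conjugation uses $\Lambda=k(t)\langle x\rangle^{1-\sigma}_h+\lambda_2+\lambda_1$, whose $x$-growth is capped at $1-\sigma<1/s$; the resulting interaction $k(t)\partial_\xi a_3\,\partial_x\langle x\rangle^{1-\sigma}_h$ has the same order $(2,-\sigma)$ as the $\lambda_2$-correction and is absorbed by taking $M_2$ large, while Theorem \ref{theorem_continuity_infinite_order_in_gelfand_shilov_sobolev_spaces}(i) (which uses $1/(1-\sigma)>s$) shows that $e^{\pm\Lambda}$ costs only an arbitrary $\bar\delta>0$ of $\rho_2$-decay, which explains the loss $\rho_2\mapsto-\tilde\delta$. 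Two further points you gloss over: the paper's $\lambda_j$ are $t$-\emph{independent} — they involve only the constant $w(\xi/h)=-\operatorname{sgn}(\partial_\xi a_3)$, not $\partial_\xi a_3$ itself — so no $\partial_t\lambda_j$ terms appear, whereas your cascade of transport equations would give $t$-dependent correctors; and the continuity of $\{e^{\tilde\Lambda}\}^{-1}$ on the Gelfand--Shilov Sobolev spaces is not automatic from the Neumann series — the paper proves a dedicated spectral-invariance theorem (Theorem \ref{theorem_spectral_invariance_SG_operator_with_gevrey_estimates}) to get Gevrey control of the inverse symbol.
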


\begin{remark}
We notice that the solution obtained in Theorem \ref{theorem_main_theorem} has the same Gevrey regularity as the initial data but it may lose the decay exhibited at $t=0$ and admit an exponential growth for $|x| \to \infty$ when $t>0$. Moreover, the
loss $\rho_2+\tilde\delta$ for an arbitrary $\tilde\delta>0$ in the behavior at infinity is independent of $\theta$, $s$ and $\rho_1.$
Both these phenomena had been already observed in the case $p=2$, see \cite{ascanelli_cappiello_schrodinger_equations_Gelfand_shillov}.
\end{remark}

\begin{remark}
		Let us compare Theorem \ref{theorem_main_theorem} with the recent result obtained in \cite{AAC3evolGevrey}. In the latter paper, taking $a_0$ uniformly bounded, $a_1 \sim \langle x\rangle^{-\sigma/2} $ and $ a_2 \sim \langle x\rangle^{-\sigma}$ for some $\sigma\in (1/2,1)$, and the Cauchy data $f(t),g\in H^{(m_1,0)}_{(\rho_1,0);s,\theta}$ with $ s_0<\theta<1/(2(1-\sigma))$ we prove the existence of a unique solution $u \in C([0,T], H^{(m_1,0)}_{(\rho_1',0);s,\theta}(\R))$, for some $\rho_1' \in (0,\rho_1)$ i.e. a solution less regular than the data. Theorem \ref{theorem_main_theorem} in the present paper shows that if the data $f(t),g\in H^{(m_1,m_2)}_{(\rho_1,\rho_2);s,\theta}$ with $\rho_2 >0, \theta >s_0$ and $s_0 \leq s< 1/(1-\sigma)$, then there exists a solution $u \in C([0,T], H^{(m,0)}_{(\rho_1,-\tilde{\delta});s,\theta})$, $\forall \tilde{\delta}>0$, i.e. a solution with the same index $\rho_1$ as the data, but with a possible worse behavior at infinity: in particular, this solution may grow exponentially for $|x|\to\infty$.
Concerning the assumptions, with respect to the existing literature, in particular \cite{AAC3evolGevrey, kajitani_baba_cauchy_problem_schrodinger_equations}, in our result we allow:\\
- a polynomial growth of exponent $1-\sigma\in (0,1)$ for the coefficients $\Re a_1$ and $a_0$; \\
- an arbitrary Gevrey regularity index $\theta >s_0$ both for the data and for the solution, without any upper bound: namely there is no relation between the rate of decay of the data and the Gevrey regularity of the solution.
\end{remark}

\begin{remark} Part of the recent literature on $p$-evolution equations is focused on the research of necessary conditions for the well posedness of the problem \eqref{Cauchy_problem_in_introduction} in various functional settings, see \cite{dreher,  ascanelli_chiara_zanghirati_necessary_condition_for_H_infty_well_posedness_of_p_evo_equations, ichinose_remarks_cauchy_problem_schrodinger_necessary_condition}. Necessary conditions are usually expressed in an integral form as in \eqref{neccond} instead than via pointwise decay estimates as in \eqref{c}. As far as we know the only result of this type in the Gevrey setting concerns the case $p=2$, see \cite{dreher}. Our purpose is to investigate this problem in the next future for generic $p$.
\end{remark}

In order to help the reading of the next sections we briefly outline the strategy of the proof of Theorem \ref{theorem_main_theorem}.
Let
$$
iP = \partial_{t} + ia_3(t,D) + \sum_{j = 0}^{2} ia_j(t,x,D) = \partial_{t} + ia_{3}(t,D) + A(t,x,D).
$$
Noticing that $a_3(t,\xi)$ is real valued, we have 
\begin{align*}
\frac{d}{dt} \| u(t) \|^{2}_{L^{2}} &= 2Re\, (\partial_{t} u(t), u(t))_{L^{2}} \\ 
&= 2Re\, (iPu(t), u(t))_{L^2} - 2Re\, (ia_3(t,D) u(t), u(t))_{L^2} - 2Re\,(Au(t), u(t))_{L^2} \\
&\leq \| Pu(t) \|^{2}_{L^{2}} + \| u(t) \|^{2}_{L^{2}} - \,((A+A^{*})u(t), u(t))_{L^2}.
\end{align*}
Since $(A+A^{*})(t) \in \SG^{2, 1-\sigma}(\R^{2})$ we cannot derive an energy inequality in $L^2$ from the estimate above. The idea is then to conjugate the operator $iP$ by a suitable pseudodifferential operator $e^{\Lambda}(t,x,D)$ in order to obtain 
$$
(iP)_{\Lambda} := e^{\Lambda} (iP) \{e^{\Lambda}\}^{-1} = \partial_{t} + ia_3(t,D) + A_{\Lambda}(t,x,D),
$$
where $A_{\Lambda}$ still has symbol $A_\Lambda(t,x,\xi) \in \SG^{2, 1-\sigma}(\R^2)$ but with $Re\, A_{\Lambda} \geq 0$. In this way, with the aid of Fefferman-Phong (see \cite{fefferman_Phong_inequality}) and sharp G{\aa}rding (see Theorem $1.7.15$ of \cite{nicola_rodino_global_pseudo_diffferential_calculus_on_euclidean_spaces}) inequalities,  we obtain the estimate from below  
$$
Re\, (A_{\Lambda} v(t), v(t))_{L^{2}} \geq -c \| v(t) \|^{2}_{L^{2}},
$$ 
and therefore for the solution $v$ of the Cauchy problem associated to the operator $P_\Lambda$ we get
$$
\frac{d}{dt} \| v(t) \|^{2}_{L^{2}} \leq 
C(\| (iP)_{\Lambda}v(t) \|^{2}_{L^{2}} + \| v(t) \|^{2}_{L^{2}}).
$$
Gronwall inequality then gives  the desired energy estimate for the conjugated operator $(iP)_{\Lambda}$. By standard arguments in the energy method we then obtain that the Cauchy problem associated with $P_{\Lambda}$
\begin{equation}\label{equation_conjugated_cauchy_problem}
	\begin{cases}
	P_{\Lambda}(t,D_t, x, D_x) v(t,x) = e^{\Lambda}(t,x,D_x) f(t,x), \quad (t,x) \in [0,T] \times \R \\
	v(0,x)= e^{\Lambda}(0,x, D_x)g(x), \quad x \in \R
\end{cases},
\end{equation}
 is well-posed in the weighted Sobolev spaces $H^{m}(\R)$ in \eqref{KatoSobolev}.
Finally, we derive the existence of a solution of (\ref{Cauchy_problem_in_introduction}) from the well posedness of \eqref{equation_conjugated_cauchy_problem}.
In fact, if $u$ solves (\ref{Cauchy_problem_in_introduction}) then $v=e^{\Lambda}u$ solves (\ref{equation_conjugated_cauchy_problem}), and if $v$ solves (\ref{equation_conjugated_cauchy_problem}) then $u=\{e^{\Lambda}\}^{-1}v$ solves (\ref{Cauchy_problem_in_introduction}). In this step the continuous mapping properties of $e^{\Lambda}$ and $\{e^{\Lambda}\}^{-1}$ will play an important role.

The construction of the operator $e^\Lambda$ will be the core of the proof. The function $\Lambda(t,x,\xi)$ will be of the form
$$
\Lambda(t, x, \xi) = k(t) \langle x \rangle^{1-\sigma}_{h} + \lambda_2(x,\xi) + \lambda_1(x,\xi), \quad t \in [0,T],\, x, \xi \in \R,
$$
where $\lambda_1, \lambda_2 \in \SG^{0, 1-\sigma}_{\mu} (\R^2)$, $k \in C^1([0,T]; \R)$ is a non increasing function to be chosen later on and $\langle x \rangle_{h}=\sqrt{h^2+x^2}$ for some $h \geq 1$ to be chosen later on. The role of the terms $\lambda_1, \lambda_2, k$ will be the following:
\\
-  the transformation with $\lambda_2$ will change the terms of order $2$ into the sum of a positive operator of the same order plus a remainder of order $1$; 
\\
- the transformation with $\lambda_1$ will not change the terms of order $2$, but it will turn the terms of order $1$ into the sum of a positive operator of order $1$ plus a remainder of order $0$, with some growth with respect to $x$;
\\
- the transformation with $k(t)\langle x \rangle_{h}^{1-\sigma}$ will correct this remainder term, making it positive.\\
The precise definitions of $\lambda_2$ and $\lambda_1$ will be given in Section \ref{section_definition_and_estimates_of_lambda_2_lambda_1}. Since $\Lambda$ admits an algebraic growth on the $x$ variable, then $e^{\Lambda}$ presents an exponential growth; this is the reason why we need to work with pseudodifferential operators of infinite order.

The paper is organized as follows. In Section \ref{sec2} we recall some basic definitions and properties of Gelfand-Shilov spaces and the calculus for pseudodifferential operators of infinite order that we will use in the next sections. Section \ref{section_spectral_invariance_for_SG_with_Gevrey_estimates} is devoted to prove a result of spectral invariance for pseudodifferential operators with Gevrey regular symbols which is new in the literature and interesting per se. In this paper the spectral invariance will be used to prove the continuity properties of the inverse $\{e^{\Lambda}(t,x,D)\}^{-1}$.  In Section \ref{section_definition_and_estimates_of_lambda_2_lambda_1} we introduce the functions $\lambda_1,\lambda_2$ mentioned above and prove the invertibility of the operator $e^{\tilde{\Lambda}}(x,D), \tilde{\Lambda}=\lambda_1+\lambda_2$. In Section \ref{section_conjugation_of_iP} we perform the change of variable and the conjugation of the operator $P$. Section \ref{section_choices_of_M1_M2_kt} concerns the choice of the parameters appearing in the definition of $\Lambda$ in order to obtain  a positive operator on $L^2(\R)$. Finally, in Section \ref{section_proof_of_main_theorem}, we give the proof of Theorem \ref{theorem_main_theorem}.

\section{Gelfand-Shilov spaces and pseudodifferential operators of infinite order on $\R^n$}\label{sec2}
\subsection{Gelfand-Shilov spaces}\label{section_gelfand_shilov_spaces}

Given $s, \theta \geq 1$ and $A, B > 0$ we say that a smooth function $f$ belongs to $\mathcal{S}^{\theta, A}_{s, B} (\R^{n})$ if there is a constant $C > 0$  such that 
$$
|x^{\beta} \partial^{\alpha}_{x}f(x)| \leq C A^{|\alpha|} B^{|\beta|}\alpha!^{\theta} \beta!^{s}, 
$$
for every $\alpha, \beta \in \N^{n}_{0}$ and $x \in \R^{n}$. The norm 
$$
\| f \|_{\theta, s, A, B} \,= \sup_{\overset{x \in \R^{n}}{\alpha, \beta \in \N^{n}_{0}}} |x^{\beta} \partial^{\alpha}_{x}f(x)|A^{-|\alpha|}B^{-|\beta|}\alpha!^{-\theta}\beta!^{-s} 
$$
turns $\mathcal{S}^{\theta, A}_{s, B}(\R^{n})$ into a Banach space. We define 
$$
\mathcal{S}^{\theta}_{s}(\R^{n}) = \bigcup_{A,B >0} \mathcal{S}^{\theta, A}_{s, B} (\R^{n})
$$ 
and we can equip it with the inductive limit topology of the Banach spaces $\mathcal{S}^{\theta, A}_{s, B}(\R^{n})$. The spaces $\mathcal{S}^{\theta}_{s}(\R^{n})$ have been originally introduced in the book \cite{GS2}, see \cite{Pi}. We also consider the projective version, that is 
$$
\Sigma^{\theta}_{s} (\R^{n}) = \bigcap_{A, B > 0} \mathcal{S}^{\theta, A}_{s, B} (\R^{n})
$$
equipped with the projective limit topology. When $\theta = s$ we simply write $\mathcal{S}_{\theta}$, $\Sigma_{\theta}$ instead of $\mathcal{S}^{\theta}_{\theta}, \Sigma^{\theta}_{\theta}$. We can also define, for $C, \varepsilon > 0$, the normed space $\mathcal{S}^{\theta, \varepsilon}_{s, C}(\R^{n})$ given by the functions $f \in C^\infty(\R^n)$ such that there is $C > 0$ satisfying
$$
\|f\|_{s,\theta}^{\varepsilon, C}:= \sup_{\stackrel{x \in \R^{n}}{\alpha \in \N^{n}_{0}}}C^{-|\alpha|} \alpha!^{-\theta} e^{\varepsilon |x|^{\frac{1}{s}}}|\partial^{\alpha}_{x}f(x)| <\infty ,
$$
and we have (with equivalent topologies) 
$$
\mathcal{S}^{\theta}_{s}(\R^{n}) = \bigcup_{C,\varepsilon > 0} \mathcal{S}^{\theta, \varepsilon}_{s, C}(\R^{n}) \quad \text{and} \quad \Sigma^{\theta}_{s}(\R^{n}) = \bigcap_{C,\varepsilon > 0} \mathcal{S}^{\theta, \varepsilon}_{s, C}(\R^{n}).
$$

The following inclusions are continuous (for every $\varepsilon > 0$)
$$
\Sigma^{\theta}_{s} (\R^{n}) 
\subset \mathcal{S}^{\theta}_{s}(\R^{n}) \subset \Sigma^{\theta+\varepsilon}_{s+\varepsilon} (\R^{n}).
$$
All the previous spaces can be written in terms of the Gelfand-Shilov Sobolev spaces $H^m_{\rho;s,\theta},$ with $\rho, m \in \R^2$ defined in the Introduction. Namely, we have
$$\mathcal{S}^{\theta}_{s}(\R^{n})= \bigcup_{\stackrel{\rho \in \R^2}{\rho_j >0, j=1,2}}H^m_{\rho;s,\theta}(\R^{n}), \qquad \Sigma^{\theta}_{s}(\R^{n})= \bigcap_{\stackrel{\rho \in \R^2}{\rho_j >0, j=1,2}}H^m_{\rho;s,\theta}(\R^{n}).$$

From now and on we shall denote by $(\mathcal{S}^{\theta}_{s})' (\R^{n})$, $(\Sigma^{\theta}_{s})' (\R^{n})$  the respective dual spaces.

Concerning the action of the Fourier transform we have the following isomorphisms
$$
\mathcal{F}: \Sigma^{\theta}_{s} (\R^{n}) \to \Sigma^{s}_{\theta} (\R^{n}), \quad 
\mathcal{F}: \mathcal{S}^{\theta}_{s}(\R^{n}) \to \mathcal{S}^{s}_{\theta}(\R^{n}),
$$
$$\mathcal{F}: H^{(m_1,m_2)}_{(\rho_1,\rho_2);s,\theta}(\R^{n}) \to H^{(m_2,m_1)}_{(\rho_2,\rho_1);\theta,s}(\R^{n}).$$

\subsection{Pseudodifferential operators of infinite order }\label{section_pseudodifferential_operators_of_infinite_order}

We start defining the symbol classes of infinite order.

\begin{definition}\label{definition_SG_symbol_classes_infinte_order}
	Let $\tau \in \R$, $\kappa, \theta, \mu, \nu > 1$ and $C ,c> 0$.
	\begin{itemize}
		\item [(i)] We denote by $\SG^{\tau, \infty}_{\mu,\nu;\kappa}(\R^{2n}; C, c)$ the Banach space of all functions
		$p\in C^\infty(\R^{2n})$ satisfying the following condition:
		$$
\|p\|_{C,c}:=\sup_{\alpha, \beta \in \N^n_0}	C^{-|\alpha + \beta|} \alpha!^{-\mu} \beta!^{-\nu}\sup_{x,\xi \in \R^n}\langle \xi \rangle^{-\tau+|\alpha|}\langle x \rangle^{|\beta|}e^{-c|x|^{\frac1{\kappa}}}|\partial^{\alpha}_{\xi}\partial^{\beta}_{x}p(x,\xi)| <\infty.
		$$
		We set $\SG^{\tau, \infty}_{\mu,\nu;\kappa}(\R^{2n}):=\bigcup_{C,c>0}\SG^{\tau, \infty}_{\mu,\nu;\kappa}(\R^{2n}; C, c)$ with the topology of inductive limit of the Banach spaces 
		$\SG^{\tau, \infty}_{\mu,\nu;\kappa}(\R^{2n}; C,c)$.
		
		\item [(ii)]We denote by $\SG^{\infty, \tau}_{\mu,\nu;\theta}(\R^{2n}; C,c)$ the Banach space of all functions $p\in C^\infty(\R^{2n})$ satisfying the following condition:
		$$
		\|p\|^{C,c}:=\sup_{\alpha, \beta \in \N^{n}_{0}}C^{-|\alpha + \beta|} \alpha!^{-\mu} \beta!^{-\nu}\sup_{x, \xi \in \R^{n}} \langle \xi \rangle^{|\alpha|}  \langle x \rangle^{-\tau + |\beta|}e^{-c|\xi|^{\frac{1}{\theta}}} |\partial^{\alpha}_{\xi}\partial^{\beta}_{x}p(x,\xi)| <\infty .
		$$
		We set $\SG^{\infty, \tau}_{\mu,\nu; \theta}(\R^{2n}) :=\bigcup_{C,c>0} \SG^{\infty, \tau}_{\mu,\nu;\theta}(\R^{2n}; C,c)$ with the topology of inductive limit of the spaces 
		$\SG^{\infty, \tau}_{\mu,\nu; \theta}(\R^{2n}; C,c)$.
	\end{itemize} 
\end{definition}

We also need the following symbol classes of finite order.

\begin{definition}\label{definition_SG_symbol_classe_finite_order}
	Let $\mu, \nu \geq 1$, $m = (m_1, m_2) \in \R^{2}$ and $C > 0$. We denote by $\SG^{m}_{\mu,\nu}(\R^{2n}; C)$ the Banach space of all functions $p\in C^\infty(\R^{2n})$ satisfying the following condition:
	$$
	\sup_{\alpha, \beta \in \N^{n}_{0}}C^{-|\alpha + \beta|} \alpha!^{-\mu} \beta!^{-\nu}\sup_{x, \xi \in \R^{n}} \langle \xi \rangle^{-m_1+|\alpha|}  \langle x \rangle^{-m_2 + |\beta|}|\partial^{\alpha}_{\xi} \partial^{\beta}_{x}p(x,\xi)| <\infty.
	$$
We set $\SG^{m}_{\mu, \nu}(\R^{2n}):=\bigcup_{C>0}\SG^{m}_{\mu,\nu}(\R^{2n}; C)$.
	
	Finally we say that $p \in \SG^{m}(\R^{2n})$ if for any $\alpha, \beta \in \N^{n}_{0}$ there is $C_{\alpha, \beta} > 0$ satisfying
	$$
	|\partial^{\alpha}_{\xi} \partial^{\beta}_{x}p(x,\xi)| \leq C_{\alpha,\beta} 
	\langle \xi \rangle^{m_1 -|\alpha|} \langle x \rangle^{m_2 - |\beta|}, 
	\quad x,\xi \in \R^{n}.
	$$	
\end{definition}

When $\mu = \nu$ we write $\SG^{m}_{\mu}(\R^{2n})$, $\SG^{\tau,\infty}_{\mu,\kappa}(\R^{2n})$, $\SG^{\infty,\tau}_{\mu,\theta}(\R^{2n})$ instead of $\SG^{m}_{\mu, \mu}(\R^{2n})$, $\SG^{\tau, \infty}_{\mu,\mu; k}(\R^{2n})$, $\SG^{\infty, \tau}_{\mu, \mu; \theta}(\R^{2n})$. 

As usual, given a symbol $p(x,\xi)$ we shall denote by $p(x,D)$ or by $\textrm{op} (p)$ the pseudodifferential operator defined as standard by 
$$
p(x,D) u (x) = \int e^{i x \cdot \xi} p(x,\xi) \widehat{u}(\xi) \dslash \xi, \quad x \in \R^{n},
$$
where $u$ belongs to some suitable function space depending on the assumptions on $p$, and $\dslash\xi$ stands for $(2\pi)^{-n}d\xi$. 
We have the following continuity results.

\begin{proposition}\label{proposition_continuity_pseudo_infinite_order_on_x_Gelfand_shilov_spaces}
	Let $\tau \in \R$, $s > \mu \geq 1$, $\nu \geq 1$ and $p \in \SG^{\tau, \infty}_{\mu, \nu; s}(\R^{2n})$. Then for every $\theta >\nu$ the pseudodifferential operator $p(x,D)$ with symbol $p(x,\xi)$ is continuous on $\Sigma^{\theta}_{s}(\R^{n})$ and it extends to a continuous map on $(\Sigma^{\theta}_{s})'(\R^{n})$.
\end{proposition}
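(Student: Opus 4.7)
\medskip

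\noindent
\textbf{Proof strategy.} Starting from the representation
$$
p(x,D)u(x) = \int e^{ix\cdot\xi} p(x,\xi)\widehat{u}(\xi)\dslash\xi, \quad u\in\Sigma^{\theta}_{s}(\R^n),
$$
and recalling that $\widehat{u}\in\Sigma^{s}_{\theta}(\R^n)$ satisfies, for every $A',\delta>0$,
$|\partial_\xi^{\gamma}\widehat{u}(\xi)|\leq C_{A',\delta}\,A'^{|\gamma|}\gamma!^{s}\,e^{-\delta|\xi|^{1/\theta}}$, the plan is to show that for every $\varepsilon>0$ and every $A>0$ there exists $C=C(\varepsilon,A)>0$ such that
$$
|\partial_x^{\alpha}[p(x,D)u](x)|\leq C\,A^{|\alpha|}\alpha!^{\theta}\,e^{-\varepsilon|x|^{1/s}},\qquad x\in\R^n,\ \alpha\in\N_0^n,
$$
which is exactly the Gelfand--Shilov norm estimate characterizing $\Sigma^{\theta}_{s}(\R^n)$.

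\medskip

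\noindent
\textbf{Key steps.} First I would write
$$
\partial_x^{\alpha}[p(x,D)u](x)=\sum_{\alpha_1+\alpha_2=\alpha}\binom{\alpha}{\alpha_1}\int e^{ix\cdot\xi}\,\xi^{\alpha_1}\,\partial_x^{\alpha_2}p(x,\xi)\,\widehat{u}(\xi)\dslash\xi,
$$
and, to transfer the factor $x^{\gamma}$ to $\xi$-derivatives, use the identity $(ix)^{\gamma}e^{ix\cdot\xi}=\partial_{\xi}^{\gamma}e^{ix\cdot\xi}$ followed by integration by parts. Bounding the resulting integrand via Leibniz's rule, the symbol estimate for $p\in\SG^{\tau,\infty}_{\mu,\nu;s}(\R^{2n})$ (producing factors $D^{|\gamma_2+\alpha_2|}\gamma_2!^{\mu}\alpha_2!^{\nu}\langle\xi\rangle^{\tau-|\gamma_2|}\langle x\rangle^{-|\alpha_2|}e^{c_0|x|^{1/s}}$), and the decay of $\widehat{u}$, I would obtain
$$
|x|^{|\gamma|}\,|\partial_x^{\alpha}[p(x,D)u](x)|\leq C_{A',\delta}\,\gamma!^{s}\,\alpha!^{\theta}\,B_{A',A'''}^{|\gamma|+|\alpha|}\,e^{c_0|x|^{1/s}}\int\langle\xi\rangle^{|\tau|+|\alpha|}e^{-\delta|\xi|^{1/\theta}}d\xi.
$$
The two crucial tools here are the inequality $|\xi|^{|\alpha_1|}e^{-\delta|\xi|^{1/\theta}/2}\leq C_\delta^{|\alpha_1|+1}\alpha_1!^{\theta}$, used to absorb the $\xi^{\alpha_1}$ factor into an $\alpha!^{\theta}$ bound, and, exploiting the strict inequalities $s>\mu$ and $\theta>\nu$, the trade-off estimates
$$
D^{|\gamma_2|}\gamma_2!^{\mu}\leq C(A''')\,A'''^{|\gamma_2|}\gamma_2!^{s},\qquad D^{|\alpha_2|}\alpha_2!^{\nu}\leq C(A'''')\,A''''^{|\alpha_2|}\alpha_2!^{\theta},
$$
with $A''',A''''>0$ arbitrary. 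These allow the constant $B_{A',A'''}$ in front of $\gamma!^{s}$ to be chosen as small as desired, at the price of enlarging $C_{A',\delta}$.

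\medskip

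\noindent
\textbf{Conclusion and the main obstacle.} Once $B$ is made sufficiently small, optimizing in $|\gamma|=N$ via Stirling (choosing $N\sim \eta|x|^{1/s}$) converts the bound $N!^{s}B^{N}|x|^{-N}$ into $e^{-\kappa_B |x|^{1/s}}$ with $\kappa_B\to\infty$ as $B\to 0$. Combining with the fixed growth $e^{c_0|x|^{1/s}}$ of the symbol, one gets decay $e^{-(\kappa_B-c_0)|x|^{1/s}}$, which for $B$ small enough beats any prescribed $\varepsilon$, yielding the required estimate. This gives the continuity $p(x,D)\colon\Sigma^{\theta}_{s}(\R^n)\to\Sigma^{\theta}_{s}(\R^n)$. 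The extension to $(\Sigma^{\theta}_{s})'(\R^n)$ then follows by duality, defining $\langle p(x,D)u,\varphi\rangle=\langle u,{}^{t}p(x,D)\varphi\rangle$ for $\varphi\in\Sigma^{\theta}_{s}(\R^n)$, provided one checks that the transpose symbol ${}^{t}p$ again belongs to $\SG^{\tau,\infty}_{\mu,\nu;s}(\R^{2n})$; this is a standard consequence of the symbolic calculus of infinite order (recalled in Section \ref{section_pseudodifferential_operators_of_infinite_order}). The main technical obstacle is precisely the tension between the fixed exponential growth $e^{c_0|x|^{1/s}}$ of the symbol and the need to produce arbitrary exponential decay $e^{-\varepsilon|x|^{1/s}}$ in the image; this is resolved only because the hypothesis $s>\mu$ gives room to shrink the effective Gevrey constant, and because $\widehat{u}\in\Sigma^{s}_{\theta}$ supplies decay of the form $e^{-\delta|\xi|^{1/\theta}}$ for every $\delta>0$.
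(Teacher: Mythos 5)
Your main argument is sound and is essentially the intended one: the paper does not write the proof out, but refers to the analogous statements in \cite{ascanelli_cappiello_schrodinger_equations_Gelfand_shillov} and \cite{AAC3evolGevrey}, whose proofs proceed exactly as you propose — differentiate under the integral, transfer powers of $x$ to $\xi$-derivatives by integration by parts, estimate by Leibniz using the symbol bounds and the decay of $\widehat{u}\in\Sigma^{s}_{\theta}$, absorb $\xi^{\alpha_1}$ through $|\xi|^{N}e^{-\delta|\xi|^{1/\theta}}\leq C_\delta^{N}N!^{\theta}$, exploit the strict gaps $s>\mu$ and $\theta>\nu$ to make the geometric constants as small as needed, and finally optimize over $|\gamma|$ to convert $N!^{s}B^{N}|x|^{-N}$ into $e^{-\kappa_B|x|^{1/s}}$ with $\kappa_B$ beating the growth $e^{c_0|x|^{1/s}}$ of the symbol. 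This is the correct mechanism, and your identification of the $s>\mu$, $\theta>\nu$ trade-off as the crucial point matches the intended proof.

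The one genuine weak point is the justification of the extension to $(\Sigma^{\theta}_{s})'(\R^{n})$. You reduce it to showing that the transpose ${}^{t}p(x,D)$ is continuous on $\Sigma^{\theta}_{s}(\R^{n})$ and propose to get this from the symbolic calculus of Section \ref{section_pseudodifferential_operators_of_infinite_order}, claiming ${}^{t}p\in\SG^{\tau,\infty}_{\mu,\nu;s}(\R^{2n})$. But Theorem \ref{theorem_symbolic_calculus_of_infinte_order} requires $\kappa>\mu+\nu-1$, i.e.\ $s>\mu+\nu-1$ here, and this does not follow from the hypotheses $s>\mu$, $\theta>\nu$ (take, say, $\mu=1$, $\nu=2$, $s=3/2$, $\theta=5/2$); moreover even when it applies it produces a remainder in $\mathcal{S}_{\mu+\nu-1}$ whose action on $\Sigma^{\theta}_{s}$ would need a further condition. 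So, as written, this step is not covered by the stated assumptions. The fix is to bypass the calculus: the transpose is the ``reverse''-type operator ${}^{t}p(x,D)\varphi(x)=\iint e^{i(x-y)\cdot\xi}p(y,-\xi)\varphi(y)\,dy\,\dslash\xi$ (cf.\ \eqref{reverseop}), equivalently $\mathcal{F}^{-1}$ applied to $g(\xi)=\int e^{-iy\cdot\xi}p(y,-\xi)\varphi(y)\,dy$, and its continuity on $\Sigma^{\theta}_{s}$ follows from the same direct estimates you already set up: here the growth $e^{c_0|y|^{1/s}}$ of the symbol is integrated against $\varphi$ and is beaten directly by the decay $e^{-\varepsilon|y|^{1/s}}$, valid for every $\varepsilon>0$, while the factorial bookkeeping again uses only $s>\mu$ and $\theta>\nu$. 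With that replacement (and tracking constants for continuity between the Fr\'echet topologies), the proof is complete.
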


\begin{proposition}\label{proposition_continuity_pseudo_infinite_order_on_xi_Gelfand_shilov_spaces}
	Let $\tau \in \R$, $\theta > \nu \geq 1$, $\mu \geq 1$ and $p \in \SG^{\infty, \tau}_{\mu, \nu; \theta}(\R^{2n})$. Then for every $s >\mu$ the operator $p(x,D)$ is continuous on $\Sigma^{\theta}_{s}(\R^{n})$ and it extends to a continuous map on $(\Sigma^{\theta}_{s})'(\R^{n})$).
\end{proposition}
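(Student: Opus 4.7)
First I would adapt the strategy of Proposition \ref{proposition_continuity_pseudo_infinite_order_on_x_Gelfand_shilov_spaces}, but now with the roles of $x$ and $\xi$ interchanged: since the symbol $p$ grows only polynomially in $x$ and subexponentially of order $1/\theta$ in $\xi$, all the exponential absorption must happen on the Fourier side, and the decay of $p(x,D)u$ in $x$ will be extracted via integration by parts in $\xi$. Indeed, as $\mathcal{F}\colon\Sigma^{\theta}_{s}(\R^n)\to\Sigma^{s}_{\theta}(\R^n)$ is an isomorphism, for $u\in\Sigma^{\theta}_{s}(\R^n)$ the Fourier transform $\widehat{u}$ satisfies
$$
|\partial_{\xi}^{\delta}\widehat{u}(\xi)|\le D_{A,\varepsilon}\, A^{|\delta|}\delta!^{s}\, e^{-\varepsilon|\xi|^{1/\theta}}
$$
with $A>0$ arbitrarily small and $\varepsilon>0$ arbitrarily large.

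Second, starting from $p(x,D)u(x)=\int e^{ix\xi}p(x,\xi)\widehat{u}(\xi)\,\dslash\xi$, I would differentiate $\alpha$ times in $x$ using Leibniz, producing terms of the form $(i\xi)^{\alpha-\gamma}\partial_{x}^{\gamma}p(x,\xi)$ with $\gamma\le\alpha$, and then, for each coordinate $j$, apply the identity $x_{j}^{N}e^{ix\xi}=(-i\partial_{\xi_{j}})^{N}e^{ix\xi}$ combined with $N$ integrations by parts in $\xi$ to obtain
$$
x_{j}^{N}\partial_{x}^{\alpha}p(x,D)u(x)=\int e^{ix\xi}\,(i\partial_{\xi_{j}})^{N}\!\Bigl[\sum_{\gamma\le\alpha}\binom{\alpha}{\gamma}(i\xi)^{\alpha-\gamma}\partial_{x}^{\gamma}p(x,\xi)\,\widehat{u}(\xi)\Bigr]\dslash\xi.
$$
Inserting the symbol estimates of Definition \ref{definition_SG_symbol_classes_infinte_order}(ii) and the bounds above for $\widehat{u}$, and choosing $\varepsilon$ strictly larger than the exponential growth constant $c$ of $p$ in $\xi$, the integrand decays as $e^{-(\varepsilon-c)|\xi|^{1/\theta}}$, hence is integrable. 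After reorganising the resulting nested Leibniz sums, the strict inequalities $s>\mu$ and $\theta>\nu$ allow all factorials to be absorbed into $N!^{s}$ and $\alpha!^{\theta}$ respectively through elementary estimates of the type $(N-k)!^{\mu}k!^{s}\le N!^{s}$ and $\gamma!^{\nu}\le\gamma!^{\theta}\le\alpha!^{\theta}$; in this way one arrives at
$$
|x_{j}^{N}\partial_{x}^{\alpha}p(x,D)u(x)|\le C_{1}\langle x\rangle^{\tau}\,\widetilde{A}^{|\alpha|+N}\,\alpha!^{\theta}N!^{s},
$$
where $\widetilde{A}$ can be made arbitrarily small by choosing $A$ small and $\varepsilon$ large. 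Optimizing over $N$ (taking $N\sim c_{1}|x|^{1/s}$ and applying Stirling) converts this polynomial-in-$|x|$ control into an exponential one of the form $|\partial_{x}^{\alpha}p(x,D)u(x)|\le C_{2}\,\widetilde{A}^{|\alpha|}\alpha!^{\theta}e^{-\varepsilon'|x|^{1/s}}$ with $\varepsilon'>0$ arbitrary; as $\widetilde{A}$ and $\varepsilon'$ are both arbitrary, this is exactly $p(x,D)u\in\Sigma^{\theta}_{s}(\R^n)$ and provides the seminorm estimates needed for continuity.

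Finally, to extend $p(x,D)$ to a continuous map on $(\Sigma^{\theta}_{s})'(\R^n)$, I would rely on the symbolic calculus recalled in Section \ref{sec2} to verify that the formal transpose ${}^{t}p(x,D)$ is again a pseudodifferential operator whose left symbol lies in $\SG^{\infty,\tau}_{\mu,\nu;\theta}(\R^{2n})$ (possibly with enlarged constants). Then the continuity just proven, applied to ${}^{t}p(x,D)$, and the prescription $\langle p(x,D)u,\varphi\rangle:=\langle u,{}^{t}p(x,D)\varphi\rangle$ give the desired continuous extension. I expect the main obstacle to be the combinatorial bookkeeping in the double Leibniz expansion: it is precisely in packaging the four kinds of factorial factors (coming from $\partial_{\xi}^{k}\widehat{u}$, from $\partial_{\xi}\partial_{x}p$, and from the Leibniz coefficients in $\alpha$ and in $N$) that the strict inequalities $s>\mu$ and $\theta>\nu$ enter in an essential, non-cosmetic way, and care is required to keep the resulting constants independent of $\alpha$ and $N$ while allowing $\widetilde{A}$ to be prescribed as small as needed.
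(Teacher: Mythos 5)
Your overall strategy — integration by parts in $\xi$ combined with a Leibniz expansion, exploiting the Fourier isomorphism $\mathcal{F}\colon\Sigma^{\theta}_{s}\to\Sigma^{s}_{\theta}$, and choosing $\varepsilon$ strictly larger than the growth constant $c$ of the symbol — is the standard one and matches the line indicated by the paper, which refers the reader to \cite[Proposition 2.3]{ascanelli_cappiello_schrodinger_equations_Gelfand_shillov} and \cite[Proposition 1]{AAC3evolGevrey}. However, two points need correction. First, the model inequalities $(N-k)!^{\mu}k!^{s}\le N!^{s}$ and $\gamma!^{\nu}\le\gamma!^{\theta}\le\alpha!^{\theta}$ already hold at equality $s=\mu$, $\theta=\nu$, so they are not where strictness enters. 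What $s>\mu$ and $\theta>\nu$ actually buy is the absorption of the symbol's geometric constants into the factorial slack: one has $C^{|\gamma|}\gamma!^{\nu}\le C(A')\,A'^{|\gamma|}\gamma!^{\theta}$ for every $A'>0$ because $C^{|\gamma|}/\gamma!^{\theta-\nu}$ decays faster than any geometric sequence, and likewise $C^{k}k!^{\mu}\le C(A')A'^{k}k!^{s}$. Only through these absorptions can the base $\widetilde{A}$ in your final estimate be made arbitrarily small, which is not optional: $\Sigma^{\theta}_{s}$ is a \emph{projective} limit, so you must produce bounds with arbitrarily small $A'$ and arbitrarily large $\varepsilon'$. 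In addition, your sketch does not explain where the output Gevrey index $\theta$ in $x$ comes from: the symbol contributes only $\gamma!^{\nu}$, and the missing $(\alpha-\gamma)!^{\theta}$ must be generated by pairing the monomial $|\xi|^{|\alpha-\gamma|-k_1}$ (left after $k_1$ of the $\xi$-derivatives hit $(i\xi)^{\alpha-\gamma}$) with a slice of the decay $e^{-(\varepsilon-c)|\xi|^{1/\theta}}$, via $\sup_{\xi}|\xi|^{M}e^{-\delta|\xi|^{1/\theta}}\le (C/\delta^{\theta})^{M}M!^{\theta}$, and then combining with $\gamma!\,(\alpha-\gamma)!\le\alpha!$. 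That step is the real source of $\alpha!^{\theta}$ and must be kept explicit.

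Second, the dual-extension step does not go through as written. To claim that the left symbol of ${}^{t}p(x,D)$ again lies in $\SG^{\infty,\tau}_{\mu,\nu;\theta}$ you would need the analogue of Theorem \ref{theorem_symbolic_calculus_of_infinte_order} for this class, which requires $\theta>\mu+\nu-1$; the hypotheses $\theta>\nu\ge 1$, $\mu\ge 1$ of the present proposition do not imply that inequality (take $\mu=2$, $\nu=1$, $\theta=3/2$). Rather than reducing to a left symbol, one should treat ${}^{t}p$ directly as the amplitude operator $v\mapsto\iint e^{i(y-x)\xi}p(x,-\xi)v(x)\,dx\,\dslash\xi$; the amplitude $p(x,-\xi)$ satisfies the same estimates as $p$, and essentially the same computation as in the first part shows its continuity on $\Sigma^{\theta}_{s}$, which then yields the extension to $(\Sigma^{\theta}_{s})'$ by transposition.
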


The proof of Propositions \ref{proposition_continuity_pseudo_infinite_order_on_x_Gelfand_shilov_spaces} and \ref{proposition_continuity_pseudo_infinite_order_on_xi_Gelfand_shilov_spaces} can be derived following the argument in the proof of \cite[Proposition 2.3]{ascanelli_cappiello_schrodinger_equations_Gelfand_shillov} and \cite[Proposition 1]{AAC3evolGevrey}. We leave details to the reader.

Now we define the notion of asymptotic expansion and recall some fundamental results, which can be found in the Appendix A of \cite{ascanelli_cappiello_schrodinger_equations_Gelfand_shillov}. For $t_1, t_2 \geq 0$ set 
$$
Q_{t_1,t_2} = \{(x,\xi) \in \R^{2n} : \langle x \rangle < t_1 \,\, \text{and} \,\, \langle \xi \rangle < t_2 \}
$$
and $Q^{e}_{t_1, t_2} = \R^{2n} \setminus Q_{t_1, t_2}$. When $t_1 = t_2 = t$ we simply write $Q_t$ and $Q^{e}_{t}$.

\begin{definition}\label{definition_classes_of_asymptotic_expansions} We say that: 
	\begin{itemize}
		\item [(i)] $\sum\limits_{j \geq 0} a_j \in FSG^{\tau, \infty}_{\mu, \nu; \kappa}$ if $a_j(x, \xi) \in C^{\infty}(\R^{2n})$ and there are $C, c, B > 0$ satisfying
		$$
		|\partial^{\alpha}_{\xi}\partial^{\beta}_{x} a_j(x, \xi)| \leq C^{|\alpha| + |\beta| + 2j + 1} \alpha!^{\mu} \beta!^{\nu} j!^{\mu + \nu -1} 
		\langle \xi \rangle^{\tau - |\alpha| - j} \langle x \rangle^{-|\beta| - j} e^{c|x|^{\frac{1}{\kappa}}}, 
		$$
		for every $\alpha, \beta \in \N^{n}_{0}$, $j \geq 0$ and $(x, \xi) \in Q^{e}_{B(j)}$, where $B(j) := Bj^{\mu + \nu - 1}$;
		
		\item [(ii)] $\sum_{j \geq 0} a_j \in FSG^{\infty, \tau}_{\mu, \nu ; \theta}$ if $a_j(x, \xi) \in C^{\infty}(\R^{2n})$ and there are $C, c, B > 0$ satisfying
		$$
		|\partial^{\alpha}_{\xi}\partial^{\beta}_{x} a_j(x, \xi)| \leq C^{|\alpha| + |\beta| + 2j + 1} \alpha!^{\mu} \beta!^{\nu} j!^{\mu + \nu -1} 
		\langle \xi \rangle^{- |\alpha| - j}  e^{c|\xi|^{\frac{1}{\theta}}} \langle x \rangle^{ \tau -|\beta| - j},
		$$
		for every $\alpha, \beta \in \N^{n}_{0}$, $j \geq 0$ and $(x, \xi) \in Q^{e}_{B(j)}$;
		
		\item [(iii)] $\sum\limits_{j \geq 0} a_j \in FSG^{m}_{\mu, \nu}$ if $a_j(x, \xi) \in C^{\infty}(\R^{2n})$ and there are $C,  B > 0$ satisfying
		$$
		|\partial^{\alpha}_{\xi}\partial^{\beta}_{x} a_j(x, \xi)| \leq C^{|\alpha| + |\beta| + 2j + 1} \alpha!^{\mu} \beta!^{\nu} j!^{\mu + \nu -1} 
		\langle \xi \rangle^{m_1 - |\alpha| - j} \langle x \rangle^{m_2 -|\beta| - j}, 
		$$
		for every $\alpha, \beta \in \N^{n}_{0}$, $j \geq 0$ and $(x, \xi) \in Q^{e}_{B(j)}$.
	\end{itemize}	   
\end{definition}

\begin{definition}
	Let $\sum\limits_{j \geq 0} a_j$, $\sum\limits_{j \geq 0} b_j$ in $FSG^{\tau, \infty}_{\mu, \nu; \kappa}$. We say that $\sum\limits_{j \geq 0} a_j \sim \sum\limits_{j \geq 0} b_j$ in $FSG^{\tau, \infty}_{\mu, \nu ; \kappa}$ if  there are $C, c, B > 0$ satisfying 
	$$
	|\partial^{\alpha}_{\xi}\partial^{\beta}_{x} \sum_{j < N} (a_j - b_j) (x, \xi)| \leq C^{|\alpha| + |\beta| + 2N + 1} \alpha!^{\mu} \beta!^{\nu} N!^{\mu + \nu - 1} 
	\langle \xi \rangle^{\tau - |\alpha| - N} \langle x \rangle^{-|\beta| - N} e^{c|x|^{\frac{1}{\kappa}}}, 
	$$
	for every $\alpha, \beta \in \N^{n}_{0}$, $N \geq 1$ and $(x, \xi) \in Q^{e}_{B(N)}$. Analogous definitions for the classes $FSG^{\infty, \tau}_{\mu, \nu; \theta}$, $FSG^{m}_{\mu, \nu}$.
\end{definition}

\begin{remark}
	If $\sum\limits_{j \geq 0} a_j \in FSG^{\tau, \infty}_{\mu, \nu ; \kappa}$, then $a_0 \in \SG^{\tau, \infty}_{\mu, \nu ; \kappa}$. Given $a \in \SG^{\tau, \infty}_{\mu, \nu ; \kappa}$ and setting $b_0 = a$, $b_j = 0$, $j \geq 1$, we have $a = \sum\limits_{j \geq 0}b_j$. Hence we can consider $\SG^{\tau, \infty}_{\mu, \nu ; \kappa}$ as a subset of $FSG^{\tau, \infty}_{\mu, \nu ; \kappa}$.
\end{remark} 

\begin{proposition}\label{Proposition_existence_of_a_stymbol_which_has_an_given_asymptotica_expansion}
	Given $\sum\limits_{j \geq 0} a_j \in FSG^{\tau, \infty}_{\mu, \nu ; \kappa},$ there exists $a \in \SG^{\tau, \infty}_{\mu, \nu ; \kappa}$ such that $a \sim \sum\limits_{j \geq 0} a_j$ in $FSG^{\tau, \infty}_{\mu, \nu ; \kappa}$. Analogous results for the classes $FSG^{\infty, \tau}_{\mu, \nu ; \theta}$ and $FSG^{m}_{\mu, \nu}$.
\end{proposition}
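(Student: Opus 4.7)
I would follow the classical Borel-type construction, realizing the formal series as a true symbol obtained by summing truncated copies of each $a_j$. The candidate is
$$a(x,\xi) := \sum_{j \geq 0} \chi_j(x,\xi) \, a_j(x,\xi),$$
where $\chi_j \in C^\infty(\R^{2n})$ is a Gevrey cutoff designed to kill $a_j$ precisely in the region $Q_{B(j)}$ where the hypothesis of Definition \ref{definition_classes_of_asymptotic_expansions}(i) is not available, and to equal $1$ sufficiently far outside. Fixing a large parameter $R>1$ to be determined, one asks for $\chi_j$ with $\chi_j \equiv 0$ on $Q_{RB(j)}$, $\chi_j \equiv 1$ on $Q^e_{2RB(j)}$, together with the Gevrey estimates
$$|\partial^\alpha_\xi \partial^\beta_x \chi_j(x,\xi)| \leq C_0^{|\alpha|+|\beta|+j+1} \alpha!^\mu \beta!^\nu j!^{\mu+\nu-1} \langle \xi\rangle^{-|\alpha|} \langle x\rangle^{-|\beta|}.$$
Such cutoffs are standard: one takes $\chi_j$ of product form in $\langle x\rangle$ and $\langle\xi\rangle$ and builds each factor by convolving of the order of $j^{\mu+\nu-1}$ characteristic functions of small intervals, as in the Appendix of \cite{ascanelli_cappiello_schrodinger_equations_Gelfand_shillov}. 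Since $\chi_j(x,\xi)=0$ as soon as $RB(j)>\max(\langle x\rangle,\langle\xi\rangle)$, the series defining $a$ is locally finite and determines a smooth function.

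To verify $a \in \SG^{\tau,\infty}_{\mu,\nu;\kappa}$ I would differentiate termwise via Leibniz and apply on $\supp \chi_j$ the bound of Definition \ref{definition_classes_of_asymptotic_expansions}(i), then sum in $j$. The asymptotic relation $a\sim\sum_j a_j$ follows by splitting, for $(x,\xi)\in Q^e_{B'(N)}$ with $B'$ chosen large,
$$a(x,\xi) - \sum_{j<N} a_j(x,\xi) = \sum_{j<N}(\chi_j-1) a_j(x,\xi) \,+\, \sum_{j\geq N}\chi_j(x,\xi) a_j(x,\xi).$$
Each term of the first sum is supported in $Q_{2RB(N-1)}$, so the whole first sum vanishes identically on $Q^e_{B'(N)}$ provided $B' \geq 2RB$. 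The second sum begins at $j=N$, and on the support of each $\chi_j$ the estimate on $a_j$ supplies the factor $\langle\xi\rangle^{-N}\langle x\rangle^{-N}$ needed for the target bound, while the remaining powers can be summed geometrically in $j$.

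The central technical point is the bookkeeping of constants. The factorial $j!^{\mu+\nu-1}$ produced both in the hypothesis on $a_j$ and by the derivatives of $\chi_j$ must be absorbed by the decay available on $\supp \chi_j$. The choice $B(j)=Bj^{\mu+\nu-1}$ is precisely what makes this work: by Stirling, $(RB(j))^{-j} \lesssim (eRB)^{-j}\,j!^{-(\mu+\nu-1)}$, so the factorial cancels and leaves a geometric factor $(eRB)^{-j}$ that forces convergence once $R$ is chosen larger than a suitable multiple of the constant $C$ in Definition \ref{definition_classes_of_asymptotic_expansions}. The exponential $e^{c|x|^{1/\kappa}}$ is transported through unchanged because it depends only on $x$ and appears on both sides with the same exponent $\kappa$. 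The statements for $FSG^{\infty,\tau}_{\mu,\nu;\theta}$ and $FSG^m_{\mu,\nu}$ follow from the same scheme, exchanging the roles of $x$ and $\xi$ in the first case and simply omitting the exponential factor in the second.
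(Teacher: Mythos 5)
Your strategy---cut off each $a_j$ away from $Q_{B(j)}$ with a Gevrey cutoff $\chi_j$ and sum the truncated terms---is the same as the paper's, and the splitting $a - \sum_{j<N}a_j = \sum_{j<N}(\chi_j-1)a_j + \sum_{j\ge N}\chi_j a_j$ you use to verify $a\sim\sum a_j$ is also identical. The gap is in the cutoff estimate you posit for $\chi_j$, namely
$$
|\partial^\alpha_\xi \partial^\beta_x \chi_j(x,\xi)| \leq C_0^{|\alpha|+|\beta|+j+1}\, \alpha!^\mu\, \beta!^\nu\, j!^{\mu+\nu-1}\, \langle \xi\rangle^{-|\alpha|}\, \langle x\rangle^{-|\beta|}.
$$
The factor $j!^{\mu+\nu-1}$ is both unnecessary and, as written, fatal. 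In the Leibniz expansion of $\partial^\alpha_\xi\partial^\beta_x(\chi_j a_j)$ each term then carries \emph{two} powers of $j!^{\mu+\nu-1}$: one from the hypothesis on $a_j$ in Definition~\ref{definition_classes_of_asymptotic_expansions}(i), and one from your bound on $\chi_j$. The only decay at your disposal on $\supp\chi_j$ is $\langle\xi\rangle^{-j}\langle x\rangle^{-j}\le (RB(j))^{-j}$, and by Stirling $(RB(j))^{-j}$ absorbs exactly \emph{one} power of $j!^{\mu+\nu-1}$. Your bookkeeping paragraph silently cancels a single factorial, so with the estimate you wrote the series defining $a$ is not even bounded pointwise, let alone in $\SG^{\tau,\infty}_{\mu,\nu;\kappa}$.

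The cure is to remove the $j!^{\mu+\nu-1}$ from the cutoff estimate, which is what the paper's construction achieves. One fixes a single Gevrey cutoff $\psi$ with $\psi\equiv 0$ on $Q_2$, $\psi\equiv 1$ on $Q^e_3$, and $|\partial^\gamma_\xi\partial^\delta_x\psi|\le C^{|\gamma|+|\delta|+1}\gamma!^\mu\delta!^\nu$, and then rescales, $\psi_j(x,\xi)=\psi\left(x/R(j),\xi/R(j)\right)$. This yields $|\partial^\gamma_\xi\partial^\delta_x\psi_j|\le C^{|\gamma|+|\delta|+1}\gamma!^\mu\delta!^\nu R(j)^{-|\gamma|-|\delta|}$, and on the support of any nonzero derivative one has $R(j)\sim\langle x\rangle\sim\langle\xi\rangle$, so the prefactor converts into $\langle\xi\rangle^{-|\gamma|}\langle x\rangle^{-|\delta|}$ with no $j!$-dependence whatsoever. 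Note also that the construction you sketch---convolving about $j^{\mu+\nu-1}$ characteristic functions of small intervals---does not produce a $C^\infty$ function: a convolution of $N$ indicators is a piecewise polynomial of degree $N-1$, hence only $C^{N-2}$, so some fixed smooth Gevrey profile is needed regardless. Once the cutoff estimate is corrected, the remainder of your argument goes through exactly as the paper's proof does.
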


\begin{proposition}
	Let $a \in \SG^{0, \infty}_{\mu, \nu ; \kappa}$ such that $a \sim 0$ in $FSG^{0, \infty}_{\mu, \nu ; \kappa}$.	
	If $\kappa > \mu + \nu - 1$, then $a \in \mathcal{S}_\delta(\R^{2n})$ for every $\delta \geq \mu + \nu - 1$. Analogous results for the classes $FSG^{\infty, \tau}_{\mu, \nu ; \theta}$ and $FSG^{m}_{\mu, \nu}$.
\end{proposition}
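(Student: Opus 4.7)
The plan is to reduce to the case $\delta = \mu+\nu-1$ using the continuous inclusion $\mathcal{S}_{\mu+\nu-1}(\R^{2n}) \hookrightarrow \mathcal{S}_\delta(\R^{2n})$. Set $\delta_0 := \mu+\nu-1$. Since $\mu,\nu\leq\delta_0$, to show $a\in\mathcal{S}_{\delta_0}(\R^{2n})$ it suffices to exhibit $C,\varepsilon>0$ such that
$$|\partial^\alpha_\xi\partial^\beta_x a(x,\xi)| \leq C^{|\alpha|+|\beta|+1}\alpha!^\mu\beta!^\nu\, e^{-\varepsilon(\langle x\rangle^{1/\delta_0}+\langle\xi\rangle^{1/\delta_0})}$$
for every $\alpha,\beta\in\N^n_0$ and all $(x,\xi)\in\R^{2n}$.

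The main tool is the relation $a\sim 0$: there exist constants $C_0,c,B>0$ so that for every $N\geq 1$ and every $(x,\xi)\in Q^e_{BN^{\delta_0}}$,
$$|\partial^\alpha_\xi\partial^\beta_x a(x,\xi)| \leq C_0^{|\alpha|+|\beta|+2N+1}\alpha!^\mu\beta!^\nu N!^{\delta_0}\langle\xi\rangle^{-|\alpha|-N}\langle x\rangle^{-|\beta|-N} e^{c|x|^{1/\kappa}}.$$
First I would choose, for $(x,\xi)$ with $\max(\langle x\rangle,\langle\xi\rangle)$ sufficiently large, the integer $N_0 = \lfloor c_0\max(\langle x\rangle,\langle\xi\rangle)^{1/\delta_0}\rfloor$ with a small constant $c_0>0$ such that $c_0^{\delta_0}B\leq 1$; this guarantees $(x,\xi)\in Q^e_{BN_0^{\delta_0}}$. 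Applying Stirling's formula, $N_0!^{\delta_0}\leq K^{N_0}N_0^{N_0\delta_0}\leq K^{N_0}c_0^{N_0\delta_0}\max(\langle x\rangle,\langle\xi\rangle)^{N_0}$. Combining this with one of the factors $\langle\xi\rangle^{-N_0}\langle x\rangle^{-N_0}\leq \max^{-N_0}\min^{-N_0}$ produces an absorbing factor $(Kc_0^{\delta_0})^{N_0}$ which, after further shrinking $c_0$, beats the nuisance $C_0^{2N_0}$ from the remainder estimate by $e^{-c_2 N_0}$ for some $c_2>0$. At this point the bound reads
$$|\partial^\alpha_\xi\partial^\beta_x a(x,\xi)|\leq C^{|\alpha|+|\beta|+1}\alpha!^\mu\beta!^\nu\, e^{-c_2 N_0}\min(\langle x\rangle,\langle\xi\rangle)^{-N_0}\langle\xi\rangle^{-|\alpha|}\langle x\rangle^{-|\beta|}e^{c|x|^{1/\kappa}}.$$

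The central obstacle is the growth factor $e^{c|x|^{1/\kappa}}$, and this is precisely where the hypothesis $\kappa>\delta_0$ is used. I would split according to $\max$: if $\langle\xi\rangle\geq\langle x\rangle$, then $N_0\sim c_0\langle\xi\rangle^{1/\delta_0}$ and $|x|^{1/\kappa}\leq\langle\xi\rangle^{1/\kappa}$; since $1/\delta_0 > 1/\kappa$, for $\langle\xi\rangle$ large enough the exponential $e^{-c_2N_0}$ dominates $e^{c\langle\xi\rangle^{1/\kappa}}$ and we obtain a decay $e^{-(c_0c_2/2)\langle\xi\rangle^{1/\delta_0}}$. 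Because $\langle x\rangle^{1/\delta_0}\leq\langle\xi\rangle^{1/\delta_0}$ in this region, this bound is controlled by $e^{-(c_0c_2/4)(\langle x\rangle^{1/\delta_0}+\langle\xi\rangle^{1/\delta_0})}$. The opposite case $\langle x\rangle\geq\langle\xi\rangle$ is handled symmetrically: now $N_0\sim c_0\langle x\rangle^{1/\delta_0}$, the comparison $1/\delta_0>1/\kappa$ is again decisive, and $e^{-c_2N_0}e^{c\langle x\rangle^{1/\kappa}}\leq e^{-(c_0c_2/2)\langle x\rangle^{1/\delta_0}}$, dominating the joint decay in $x$ and $\xi$. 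The polynomial factors $\langle\xi\rangle^{-|\alpha|}$, $\langle x\rangle^{-|\beta|}$ are harmless, bounded by $1$.

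The preceding argument holds for $\max(\langle x\rangle,\langle\xi\rangle)\geq R_0$ with $R_0$ chosen large enough for the dominations to take effect. On the complementary compact region $\{(x,\xi):\max(\langle x\rangle,\langle\xi\rangle)\leq R_0\}$, both the target exponential weight and the defining estimate of the symbol class $\SG^{0,\infty}_{\mu,\nu;\kappa}$, namely
$$|\partial^\alpha_\xi\partial^\beta_x a(x,\xi)|\leq C_a^{|\alpha|+|\beta|+1}\alpha!^\mu\beta!^\nu\langle\xi\rangle^{-|\alpha|}\langle x\rangle^{-|\beta|}e^{c|x|^{1/\kappa}},$$
are uniformly controlled, so the desired inequality follows by merely adjusting constants. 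Patching the two regions and using $\alpha!^\mu\beta!^\nu\leq \alpha!^{\delta_0}\beta!^{\delta_0}$ yields $a\in\mathcal{S}_{\delta_0}(\R^{2n})$. The analogous cases $FSG^{\infty,\tau}_{\mu,\nu;\theta}$ and $FSG^m_{\mu,\nu}$ follow by the same scheme, with the roles of $x$ and $\xi$ exchanged in the first and with the decay $e^{c|x|^{1/\kappa}}$ absent in the purely finite-order setting, so no threshold on $\kappa$ is needed there.
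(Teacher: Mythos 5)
Your proof is correct and takes essentially the same route as the paper's: for each $(x,\xi)$ one chooses $N$ comparable to $\max(\langle x\rangle,\langle\xi\rangle)^{1/(\mu+\nu-1)}$ so that $(x,\xi)\in Q^e_{B(N)}$, uses Stirling on $N!^{\mu+\nu-1}$ together with the factor $\langle\xi\rangle^{-N}\langle x\rangle^{-N}$ to extract a decay $e^{-\tau\max(\langle x\rangle,\langle\xi\rangle)^{1/(\mu+\nu-1)}}$, and finally invokes $\kappa>\mu+\nu-1$ to absorb the nuisance $e^{c|x|^{1/\kappa}}$. The only cosmetic difference is that you optimize over $N$ by an explicit choice split into the cases $\max=\langle\xi\rangle$ and $\max=\langle x\rangle$, whereas the paper bounds $\langle\xi\rangle^{-N}\langle x\rangle^{-N}$ by a power of $(\langle x\rangle+\langle\xi\rangle)^{-1}$ and appeals to a standard infimum lemma for $\inf_N C^{2N}N!^{\mu+\nu-1}t^{-N}$; the two are equivalent.
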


Concerning the symbolic calculus and the continuous mapping properties on the Gelfand-Shilov Sobolev spaces we have the following results, cf. \cite[Propositions A.12 and A.13]{ascanelli_cappiello_schrodinger_equations_Gelfand_shillov}.

\begin{theorem}\label{theorem_symbolic_calculus_of_infinte_order}
	Let $p \in \SG^{\tau,  \infty}_{\mu, \nu; \kappa}(\R^{2n})$, $q \in \SG^{\tau', \infty}_{\mu, \nu; \kappa}(\R^{2n})$ with $\kappa > \mu + \nu - 1$. Then the $L^{2}$ adjoint $p^{*}$ and the composition $p\circ q$ have the following structure:
\begin{itemize}	
\item[] $p^{*}(x,D) = a(x, D) + r(x,D)$ where $r\in \mathcal{S}_{\mu+\nu-1}(\R^{2n})$, $a \in \SG^{\tau,  \infty}_{\mu, \nu; \kappa}(\R^{2n})$, and
	$$
	a(x,\xi) \sim \sum_{\alpha} \frac{1}{\alpha!} \partial^{\alpha}_{\xi}D^{\alpha}_x \overline{p(x,\xi)} \,\, \text{in} \,\,
	FSG^{\tau,  \infty}_{\mu, \nu; \kappa}(\R^{2n});
	$$
\item[] $p(x,D)\circ q(x, D) = b(x,D) + s(x,D)$, where $s \in \mathcal{S}_{\mu+\nu-1}(\R^{2n})$, $b \in \SG^{\tau+\tau',  \infty}_{\mu, \nu; \kappa}(\R^{2n})$ and
	$$
	b(x, \xi) \sim \sum_{\alpha} \frac{1}{\alpha!} \partial^{\alpha}_{\xi}p(x,\xi) D^{\alpha}_xq(x,\xi) \,\, \text{in} \,\,
	FSG^{\tau+\tau',  \infty}_{\mu, \nu; \kappa}(\R^{2n}).
	$$
	\end{itemize}
	Analogous results for the classes $\SG^{\infty, \tau}_{\mu, \nu; \theta}(\R^{2n})$ and $\SG^{m}_{\mu,\nu}(\R^{2n})$.
\end{theorem}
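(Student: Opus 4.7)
The plan is to adapt the standard $\SG$-symbolic calculus to the present infinite-order Gelfand-Shilov setting, treating the adjoint first and then reducing the composition to a similar computation. For the adjoint I would start from the oscillatory integral representation
$$
p^*(x,\xi)=\iint e^{-iy\cdot\eta}\,\overline{p(x+y,\xi+\eta)}\,dy\,\dslash\eta,
$$
interpreted in the usual regularized sense, and perform an $N$-th order Taylor expansion of $\overline{p(x+y,\xi+\eta)}$ in $\eta$ around $0$ with integral remainder. Integration by parts in $y$ turns each monomial $\eta^\alpha$ into $D_y^\alpha$, so the leading terms reproduce exactly $\sum_{|\alpha|<N}\frac{1}{\alpha!}\partial_\xi^\alpha D_x^\alpha\overline{p(x,\xi)}$. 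The first step is to verify that the whole formal series belongs to $FSG^{\tau,\infty}_{\mu,\nu;\kappa}$: grouping terms of total order $j=|\alpha|$ and using $\alpha!^{\mu}\alpha!^{\nu}/\alpha!\le j!^{\mu+\nu-1}$ together with the hypothesis on $p$ produces the required factorial bound $j!^{\mu+\nu-1}$ and the correct behavior in $\langle\xi\rangle$ and $\langle x\rangle$.

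Once this is checked, Proposition \ref{Proposition_existence_of_a_stymbol_which_has_an_given_asymptotica_expansion} yields an actual symbol $a\in\SG^{\tau,\infty}_{\mu,\nu;\kappa}$ with $a\sim\sum_\alpha \frac{1}{\alpha!}\partial_\xi^\alpha D_x^\alpha\overline{p}$ in the formal class. The core analytic step is then to show that $r:=p^*(x,\xi)-a(x,\xi)$ lies in $\mathcal{S}_{\mu+\nu-1}(\R^{2n})$. From the integral remainder in Taylor's formula, together with standard integration by parts in the regions $\langle\eta\rangle\gtrsim\langle\xi\rangle$ or $\langle y\rangle\gtrsim\langle x\rangle$, one obtains bounds on $\partial^\alpha_\xi\partial^\beta_x r$ with a gain of $N$ orders on $Q_{B(N)}^e$, showing that $r\sim 0$ in $FSG^{0,\infty}_{\mu,\nu;\kappa}$. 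The proposition immediately preceding Theorem \ref{theorem_symbolic_calculus_of_infinte_order}, applied under the hypothesis $\kappa>\mu+\nu-1$, then gives $r\in\mathcal{S}_{\mu+\nu-1}(\R^{2n})$, so the operator $r(x,D)$ is regularizing in the required strong sense.

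The composition is handled along the same lines starting from
$$
(p(x,D)q(x,D))\,u(x)=\iint e^{i(x-y)\cdot\eta}\,p(x,\eta)\,(q(\cdot,D)u)(y)\,dy\,\dslash\eta,
$$
which, written in terms of $\widehat{u}$, yields a double oscillatory integral with amplitude $p(x,\eta)q(y,\xi)$. Taylor expanding $p$ in $\eta$ around $\xi$ and integrating by parts in $y$ produces the series $\sum_\alpha \frac{1}{\alpha!}\partial_\xi^\alpha p(x,\xi)\,D_x^\alpha q(x,\xi)$, whose membership in $FSG^{\tau+\tau',\infty}_{\mu,\nu;\kappa}$ follows from a Leibniz-type estimate identical in spirit to the one used for the adjoint. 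The remainder is then controlled by the same $\sim 0$ plus $\mathcal{S}_{\mu+\nu-1}$ argument.

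The main obstacle is the convergence of these oscillatory integrals in the presence of the exponential weight $e^{c|x|^{1/\kappa}}$ coming from the class $\SG^{\tau,\infty}_{\mu,\nu;\kappa}$, and the careful tracking of how the cutoff of the form $\psi(y/R(N),\eta/R(N))$ with $R(N)\sim N^{\mu+\nu-1}$ used in the realization of the asymptotic expansion interacts with the Taylor remainder. The strict inequality $\kappa>\mu+\nu-1$ is precisely the quantitative condition that makes the factor $e^{c|x|^{1/\kappa}}$ negligible relative to the $j!^{\mu+\nu-1}$-gain in the asymptotic expansion, and that promotes the remainder from a mere $\sim 0$ symbol to an element of $\mathcal{S}_{\mu+\nu-1}$. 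The analogous statements for $\SG^{\infty,\tau}_{\mu,\nu;\theta}$ and $\SG^m_{\mu,\nu}$ follow from the same scheme by exchanging the roles of $x$ and $\xi$ or by dropping the exponential weights altogether.
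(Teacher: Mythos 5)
The paper does not prove this theorem itself; it refers to Propositions A.12 and A.13 of the earlier work by two of the authors cited as \cite{ascanelli_cappiello_schrodinger_equations_Gelfand_shillov}. Your sketch is the standard route one would expect to find there: write the adjoint/composition symbol as an oscillatory integral, Taylor expand, verify that the formal series lies in $FSG^{\tau,\infty}_{\mu,\nu;\kappa}$ via the factorial bookkeeping $\alpha!^{\mu}\alpha!^{\nu}/\alpha!\le |\alpha|!^{\mu+\nu-1}$, realize it by a genuine symbol using Proposition~\ref{Proposition_existence_of_a_stymbol_which_has_an_given_asymptotica_expansion}, and then show that the remainder is $\sim 0$ so that the proposition immediately preceding the theorem (which is exactly where $\kappa>\mu+\nu-1$ enters) yields membership in $\mathcal{S}_{\mu+\nu-1}$. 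You have correctly isolated the two genuine difficulties, namely controlling the oscillatory integral in the presence of the weight $e^{c|x|^{1/\kappa}}$ and making the $R(N)\sim N^{\mu+\nu-1}$ excision scale cooperate with the gain from the Taylor remainder; both hinge on the same strict inequality $\kappa>\mu+\nu-1$. The outline is correct and matches the intended machinery, though as stated it is a sketch: a full proof would need the quantitative integration-by-parts estimates in the regions $\langle\eta\rangle\gtrsim\langle\xi\rangle$ and $\langle y\rangle\gtrsim\langle x\rangle$ spelled out, together with the sublinearity argument $|x+y|^{1/\kappa}\le |x|^{1/\kappa}+|y|^{1/\kappa}$ that lets the polynomial gain from integration by parts absorb the exponential factor.
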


\begin{theorem}\label{theorem_continuity_finite_order_in_gelfand_shilov_sobolev_spaces}
	Let $p \in \SG^{m'}_{\mu, \nu}(\R^{2n})$ for some $m' \in \R^{2}$. Then for every $m, \rho \in \R^{2}$ and $s,\theta$ such that $\min \{s, \theta\} > \mu + \nu - 1$ the operator $p(x, D)$ maps $H^{m}_{\rho; s, \theta}(\R^{n})$ into $H^{m-m'}_{\rho; s, \theta}(\R^{n})$ continuously.
\end{theorem}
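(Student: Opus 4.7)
The plan is to reduce the stated continuity to an $L^{2}$ boundedness result and then to identify the symbol of a conjugated operator via the symbolic calculus. Observe that the operator
\[
\Omega_{m,\rho} := \langle x \rangle^{m_2} \langle D \rangle^{m_1} e^{\rho_2 \langle x \rangle^{1/s}} e^{\rho_1 \langle D \rangle^{1/\theta}}
\]
is, by the very definition of $H^{m}_{\rho;s,\theta}(\R^n)$, a topological isomorphism from $H^{m}_{\rho;s,\theta}(\R^n)$ onto $L^{2}(\R^n)$, with inverse $\Omega_{m,\rho}^{-1} = e^{-\rho_1 \langle D \rangle^{1/\theta}} e^{-\rho_2 \langle x \rangle^{1/s}} \langle D \rangle^{-m_1} \langle x \rangle^{-m_2}$. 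Consequently, the continuity of $p(x,D) : H^{m}_{\rho;s,\theta}(\R^n) \to H^{m-m'}_{\rho;s,\theta}(\R^n)$ is equivalent to the $L^{2}$-boundedness of
\[
T := \Omega_{m-m',\rho} \circ p(x,D) \circ \Omega_{m,\rho}^{-1},
\]
with operator norm controlled by finitely many seminorms of $p$ in $\SG^{m'}_{\mu,\nu}(\R^{2n})$.

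The core of the argument is the computation of the symbol of $T$ through the symbolic calculus. Each factor appearing in $\Omega_{m,\rho}^{\pm 1}$ is a pseudodifferential operator in a class covered by our calculus: the Fourier multipliers $e^{\pm\rho_1 \langle \xi \rangle^{1/\theta}}$ and $\langle \xi \rangle^{\pm m_1}$ lie, respectively, in $\SG^{\infty,0}_{1,1;\theta}(\R^{2n})$ and $\SG^{(\pm m_1, 0)}_{1,1}(\R^{2n})$, while the multiplication operators by $e^{\pm\rho_2 \langle x \rangle^{1/s}}$ and $\langle x \rangle^{\pm m_2}$ lie in $\SG^{0,\infty}_{1,1;s}(\R^{2n})$ and $\SG^{(0, \pm m_2)}_{1,1}(\R^{2n})$. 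Under the standing hypothesis $\min\{s,\theta\} > \mu+\nu-1$, Theorem \ref{theorem_symbolic_calculus_of_infinte_order} and its analogues for the classes $\SG^{\infty,\tau}_{\mu,\nu;\theta}$ and $\SG^{m}_{\mu,\nu}$ allow these factors to be composed successively with $p(x,D)$. At each step the leading contribution consists of a product in which the exponential or polynomial weight pairs with its inverse to produce $1$, while the error terms arising from the asymptotic expansion involve derivatives of the weights that gain factors $\langle \xi \rangle^{1/\theta-1}$ or $\langle x \rangle^{1/s-1}$; these are dominated by the $\langle \xi \rangle^{-|\alpha|}\langle x \rangle^{-|\beta|}$ decay furnished by the expansion precisely because $\min\{s,\theta\} > \mu+\nu-1$. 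As a result, modulo an operator with kernel in $\mathcal{S}_{\mu+\nu-1}(\R^{2n})$, the symbol of $T$ has principal part $\langle x \rangle^{-m'_2} \langle \xi \rangle^{-m'_1} p(x,\xi)$ and, since $p \in \SG^{m'}_{\mu,\nu}(\R^{2n})$, belongs to $\SG^{(0,0)}_{\mu,\nu}(\R^{2n})$.

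To conclude, it suffices to invoke the $L^{2}$-boundedness of pseudodifferential operators with symbol in $\SG^{(0,0)}_{\mu,\nu}(\R^{2n})$ (a Calder\'on-Vaillancourt type statement, cf. \cite{nicola_rodino_global_pseudo_diffferential_calculus_on_euclidean_spaces}), together with the trivial $L^{2}$ continuity of the regularizing term with kernel in $\mathcal{S}_{\mu+\nu-1}(\R^{2n})$. The main technical obstacle is the careful bookkeeping in these successive compositions, in particular verifying that the mixed compositions of infinite-order symbols in the $x$ and $\xi$ directions remain in the correct $FSG$-classes modulo smoothing; the hypothesis $\min\{s,\theta\} > \mu+\nu-1$ is exactly what makes the relevant formal sums well defined and guarantees that the derivatives of the exponential weights are absorbed by the gains in the asymptotic expansion.
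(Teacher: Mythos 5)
The paper does not give a proof of this statement; it cites \cite[Propositions~A.12 and A.13]{ascanelli_cappiello_schrodinger_equations_Gelfand_shillov}, where the argument is precisely the one you propose: conjugate $p(x,D)$ by the isometry $\Omega_{m,\rho}$ onto $L^2$ and show the conjugate has symbol in $\SG^{(0,0)}_{\mu,\nu}$ modulo smoothing, then apply Calder\'on--Vaillancourt. So your strategy and the role of the hypothesis $\min\{s,\theta\}>\mu+\nu-1$ are exactly what is expected.

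One point in your write-up is stated too loosely and, read literally, would not work. You claim that ``Theorem~\ref{theorem_symbolic_calculus_of_infinte_order} and its analogues \ldots allow these factors to be composed successively with $p(x,D)$.'' That theorem only covers compositions of two symbols lying in the \emph{same} one of the three classes ($\SG^{\tau,\infty}_{\mu,\nu;\kappa}$, $\SG^{\infty,\tau}_{\mu,\nu;\theta}$, or $\SG^m_{\mu,\nu}$). The operator $e^{\rho_2\langle x\rangle^{1/s}}e^{\rho_1\langle D\rangle^{1/\theta}}$, if treated as a single composition, has symbol with exponential growth in both $x$ and $\xi$, which lies outside every class in the calculus. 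To make the argument rigorous one must conjugate from the inside out: first handle $e^{\pm\rho_1\langle D\rangle^{1/\theta}}p(x,D)e^{\mp\rho_1\langle D\rangle^{1/\theta}}$ within the class $\SG^{\infty,\tau}_{\mu,\nu;\theta}$ and \emph{prove} (via the resummation with cut-offs, as in the unnumbered lemma of Section~\ref{section_conjugation_of_iP}) that the result collapses to a finite-order symbol in $\SG^{m'}_{\mu,\nu}$ plus a remainder in $\mathcal{S}_{\mu+\nu-1}(\R^{2n})$, using $\theta>\mu+\nu-1$; only then pass to the conjugation by $e^{\pm\rho_2\langle x\rangle^{1/s}}$, which lives in the class $\SG^{\tau,\infty}_{\mu,\nu;s}$ and again collapses thanks to $s>\mu+\nu-1$. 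Along the way one must also verify that the smoothing remainder produced in the first step stays smoothing after conjugation by the $x$-exponential; this is another place where $s>\mu+\nu-1$ is used. You do flag ``careful bookkeeping'' as the main obstacle, which is fair, but the ordering constraint above is the concrete reason the naive ``compose everything'' reading would fail, and it deserves to be made explicit.
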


A simple application of the Fa\`a formula gives us the following result.

\begin{proposition}\label{proposition_exponential_of_symbols_of_finite_order}
	If $\lambda \in \SG^{0,\frac{1}{\kappa}}_{\mu}(\R^{2n})$, then $e^{\lambda} \in \SG^{0, \infty}_{\mu; \kappa}(\R^{2n})$. If $\lambda \in \SG^{\frac{1}{\theta}, 0}_{\mu}(\R^{2n})$, then $e^{\lambda} \in \SG^{\infty, 0}_{\mu; \theta}(\R^{2n})$.
\end{proposition}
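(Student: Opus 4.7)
The plan is to apply the multivariate Faà di Bruno formula to $e^{\lambda(x,\xi)}$ and estimate each resulting term via the symbol bounds on $\lambda$. The second statement follows from the first by interchanging the roles of $x$ and $\xi$ throughout, so I focus on the first. For $\alpha=\beta=0$, evaluating the hypothesis at $\alpha=\beta=0$ gives $|\lambda(x,\xi)|\leq C\langle x\rangle^{1/\kappa}$, hence $|e^{\lambda}|\leq e^{C\langle x\rangle^{1/\kappa}}\leq C_{1} e^{c|x|^{1/\kappa}}$, which is already of the required form. For $|\alpha|+|\beta|\geq 1$, Faà di Bruno yields
$$\partial^{\alpha}_{\xi}\partial^{\beta}_{x} e^{\lambda} = e^{\lambda}\sum_{k=1}^{|\alpha|+|\beta|}\frac{1}{k!}\sum \frac{\alpha!\,\beta!}{\prod_{j=1}^{k}\alpha_{j}!\,\beta_{j}!} \prod_{j=1}^{k}\partial^{\alpha_{j}}_{\xi}\partial^{\beta_{j}}_{x}\lambda,$$
where the inner sum runs over decompositions $\alpha=\alpha_{1}+\dots+\alpha_{k}$, $\beta=\beta_{1}+\dots+\beta_{k}$ with $(\alpha_{j},\beta_{j})\neq 0$.

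Substituting $|\partial^{\alpha_{j}}_{\xi}\partial^{\beta_{j}}_{x}\lambda|\leq C^{|\alpha_{j}+\beta_{j}|+1}\alpha_{j}!^{\mu}\beta_{j}!^{\mu}\langle\xi\rangle^{-|\alpha_{j}|}\langle x\rangle^{1/\kappa-|\beta_{j}|}$ and applying $\prod_{j}\alpha_{j}!^{\mu-1}\beta_{j}!^{\mu-1}\leq \alpha!^{\mu-1}\beta!^{\mu-1}$ (valid since $\mu\geq 1$), the product of derivatives is dominated by $C^{|\alpha|+|\beta|+k}\alpha!^{\mu-1}\beta!^{\mu-1}\langle\xi\rangle^{-|\alpha|}\langle x\rangle^{k/\kappa-|\beta|}$. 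The crux is to absorb the extra factor $\langle x\rangle^{k/\kappa}$ into $e^{c|x|^{1/\kappa}}$: the elementary inequality $t^{k}\leq (k/c)^{k} e^{ct}$ combined with Stirling's bound $k^{k}\leq e^{k} k!$ gives $\langle x\rangle^{k/\kappa}\leq (e/c)^{k} k!\, e^{c\langle x\rangle^{1/\kappa}}$. Since $k\leq |\alpha|+|\beta|$ and $(|\alpha|+|\beta|)!\leq 2^{|\alpha|+|\beta|}|\alpha|!\,|\beta|!\leq C^{|\alpha|+|\beta|}\alpha!\,\beta!$, the factor $k!$ combines with $\alpha!^{\mu-1}\beta!^{\mu-1}$ to produce the required $\alpha!^{\mu}\beta!^{\mu}$, while the number of decompositions and the sum over $k$ contribute at most geometrically in $|\alpha|+|\beta|$. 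Together these estimates give
$$|\partial^{\alpha}_{\xi}\partial^{\beta}_{x}e^{\lambda}(x,\xi)|\leq C_{1} C^{|\alpha|+|\beta|}\alpha!^{\mu}\beta!^{\mu}\langle\xi\rangle^{-|\alpha|}\langle x\rangle^{-|\beta|} e^{c|x|^{1/\kappa}},$$
so $e^{\lambda}\in \SG^{0,\infty}_{\mu;\kappa}(\R^{2n})$.

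I expect the main technical obstacle to be the bookkeeping of combinatorial factors in the Faà di Bruno expansion, in particular verifying that the factor $k!$ produced by the Stirling estimate can be absorbed into $\alpha!\beta!$ uniformly in $k$ and in the choice of decomposition, and that the number of decompositions is controlled geometrically. The underlying reason the argument goes through cleanly is that the exponent $1/\kappa<1$ governing the $\langle x\rangle$-growth of $\lambda$ matches \emph{exactly} the exponent appearing in the exponential weight defining $\SG^{0,\infty}_{\mu;\kappa}$, so bounding $e^{\lambda}$ itself incurs no loss, and only the derivative factors need to be reorganized.
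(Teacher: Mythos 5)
The paper offers no written proof of this proposition beyond the remark that it follows from ``a simple application of the Fa\`a formula,'' and your argument is exactly that computation carried out in detail: the multivariate Fa\`a di Bruno expansion, the bound $|e^{\lambda}|\leq C e^{c|x|^{1/\kappa}}$, the absorption of $\langle x\rangle^{k/\kappa}$ into the exponential weight, and geometric control of the combinatorics, with the correct final estimate and with the second case obtained by exchanging $x$ and $\xi$. One bookkeeping remark: the factor $k!$ coming from $\langle x\rangle^{k/\kappa}\leq k!\,c^{-k}e^{c\langle x\rangle^{1/\kappa}}$ is most cleanly canceled against the prefactor $1/k!$ already present in your Fa\`a di Bruno formula, while the multinomial numerator $\alpha!\,\beta!$ together with $\prod_{j}\alpha_j!^{\mu-1}\beta_j!^{\mu-1}\leq\alpha!^{\mu-1}\beta!^{\mu-1}$ supplies the upgrade to $\alpha!^{\mu}\beta!^{\mu}$; as you phrase it, the Stirling $k!$ is made to do both jobs (leaving $1/k!$ and $\alpha!\,\beta!$ unbalanced for small $k$), but the correct allocation is immediate from the factors you have already displayed, so this is a presentational slip rather than a gap.
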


We conclude this section proving the following theorem.

\begin{theorem}\label{theorem_continuity_infinite_order_in_gelfand_shilov_sobolev_spaces}
	Let $\rho, m \in \R^{2}$ and $s, \theta, \mu > 1$ with $\min \{s,\theta\} > 2\mu - 1$. Let $\lambda \in \SG^{0, \frac{1}{\kappa}}_{\mu}(\R^{2n})$. Then: \\
i)
if $\kappa >s$, the operator $e^\lambda(x,D):H^{m}_{\rho; s, \theta} (\R^{n})\longrightarrow H^{m}_{\rho-\delta e_2; s, \theta}(\R^{n})$ is continuous for every $\delta >0$, where $e_2 = (0, 1)$; \\
ii) if $\kappa =s $, the operator $e^{\lambda}(x,D):H^{m}_{\rho; s, \theta} (\R^{n})\longrightarrow H^{m}_{\rho-\delta e_2; s, \theta}(\R^{n})$ is continuous for every $$\delta >C(\lambda):= \sup_{(x,\xi) \in \R^{2n}} \ds\frac{\lambda(x,\xi)}{\langle x \rangle^{1/s}}.$$
\end{theorem}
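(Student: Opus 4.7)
The plan is to reduce the stated continuity to an $L^2$-boundedness statement for a conjugated operator and then invoke the symbolic calculus of Theorem \ref{theorem_symbolic_calculus_of_infinte_order} together with the continuity result of Theorem \ref{theorem_continuity_finite_order_in_gelfand_shilov_sobolev_spaces}. Setting $E_{\rho,m}:=\langle x\rangle^{m_2}\langle D\rangle^{m_1}e^{\rho_2\langle x\rangle^{1/s}}e^{\rho_1\langle D\rangle^{1/\theta}}$, by the very definition of $H^m_{\rho;s,\theta}(\R^n)$ this operator realizes an isomorphism onto $L^2(\R^n)$, so the claim is equivalent to the $L^2$-continuity of
$$
T := E_{\rho-\delta e_2,m}\circ e^\lambda(x,D)\circ E_{\rho,m}^{-1}.
$$
Writing $e^{(\rho_2-\delta)\langle x\rangle^{1/s}}=e^{-\delta\langle x\rangle^{1/s}}\,e^{\rho_2\langle x\rangle^{1/s}}$ and using that multiplications by functions of $x$ commute (and separately, Fourier multipliers commute), $T$ can be reorganized as
$$
T=\langle x\rangle^{m_2}\langle D\rangle^{m_1}\,e^{-\delta\langle x\rangle^{1/s}}\,B\,\langle D\rangle^{-m_1}\langle x\rangle^{-m_2},
$$
where $B := e^{\rho_2\langle x\rangle^{1/s}}e^{\rho_1\langle D\rangle^{1/\theta}}\circ e^\lambda(x,D)\circ e^{-\rho_1\langle D\rangle^{1/\theta}}e^{-\rho_2\langle x\rangle^{1/s}}$ is the conjugation of $e^\lambda(x,D)$ by the outer exponential weights.

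I would then apply Theorem \ref{theorem_symbolic_calculus_of_infinte_order} (applicable because $\kappa\geq s>2\mu-1$) to expand the symbol of $B$. Since the outer multiplications $e^{\pm\rho_2\langle x\rangle^{1/s}}$ depend only on $x$ and the outer Fourier multipliers $e^{\pm\rho_1\langle D\rangle^{1/\theta}}$ depend only on $\xi$, the asymptotic expansion of the composition symbol has a leading term in which the two outer exponentials collapse, $e^{\pm\rho_2\langle x\rangle^{1/s}}\cdot e^{\mp\rho_2\langle x\rangle^{1/s}}=1$ and $e^{\pm\rho_1\langle\xi\rangle^{1/\theta}}\cdot e^{\mp\rho_1\langle\xi\rangle^{1/\theta}}=1$, leaving the principal part $e^{\lambda(x,\xi)}$. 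All the other terms in the expansion involve either a derivative $D_x^\alpha e^{-\rho_2\langle x\rangle^{1/s}}$ or a derivative $\partial_\xi^\alpha e^{\pm\rho_1\langle\xi\rangle^{1/\theta}}$ and are strictly lower order in the $\SG$-scale. Hence $B=\mathrm{op}(b)+R_1$ with $b\in \SG^{0,\infty}_{\mu;\kappa}(\R^{2n})$, $b\sim e^{\lambda(x,\xi)}$ in $FSG^{0,\infty}_{\mu,\mu;\kappa}$, and $R_1\in\mathcal{S}_{2\mu-1}(\R^{2n})$; the realization of the symbol $b$ from its asymptotic expansion relies on Proposition \ref{Proposition_existence_of_a_stymbol_which_has_an_given_asymptotica_expansion}.

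The heart of the argument is then to show that the symbol $c:=e^{-\delta\langle x\rangle^{1/s}}\,b$ lies in a finite-order class to which Theorem \ref{theorem_continuity_finite_order_in_gelfand_shilov_sobolev_spaces} applies. The principal part is $e^{\lambda(x,\xi)-\delta\langle x\rangle^{1/s}}$, and here the two cases of the hypothesis enter: in case (i), since $|\lambda(x,\xi)|\lesssim\langle x\rangle^{1/\kappa}$ and $1/\kappa<1/s$, one has $\lambda(x,\xi)-\delta\langle x\rangle^{1/s}\to-\infty$ as $|x|\to\infty$ uniformly in $\xi$; in case (ii), the very definition of $C(\lambda)$ gives $\lambda(x,\xi)\leq C(\lambda)\langle x\rangle^{1/s}$, so $\lambda(x,\xi)-\delta\langle x\rangle^{1/s}\leq(C(\lambda)-\delta)\langle x\rangle^{1/s}\leq 0$. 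In both situations $e^{\lambda-\delta\langle x\rangle^{1/s}}$ is uniformly bounded on $\R^{2n}$. A Fa\`a di Bruno computation, in the spirit of Proposition \ref{proposition_exponential_of_symbols_of_finite_order}, then produces the derivative estimates for $c$: one uses the $\SG^{0,1/\kappa}_\mu$-bounds on $\lambda$ together with the Stirling-type absorption $\sup_{x}\langle x\rangle^{k/s}e^{-\varepsilon\langle x\rangle^{1/s}}\leq C^k k!$, which converts the super-polynomial decay of $e^{-\delta\langle x\rangle^{1/s}}$ into the algebraic decay $\langle x\rangle^{-|\beta|}$ required by $\SG^{0,0}_\mu$. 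This yields a symbol in $\SG^{0,0}_\mu(\R^{2n})$ (up to adjustment of the Gevrey index permitted by $\min\{s,\theta\}>2\mu-1$) plus a regularizing remainder. Conjugating by the finite-order operators $\langle x\rangle^{\pm m_2}\langle D\rangle^{\pm m_1}$ preserves this class, and Theorem \ref{theorem_continuity_finite_order_in_gelfand_shilov_sobolev_spaces} applied with $m=m'=(0,0)$ delivers $T:L^2\to L^2$ continuously, which is the desired conclusion.

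The main obstacle I foresee is the derivative bookkeeping in the third step: one must track carefully, via Fa\`a di Bruno, how the Stirling-type absorption interacts with the Gevrey constants of $\lambda$, ensuring that the final Gevrey index remains within the range permitted by $\min\{s,\theta\}>2\mu-1$. It is precisely here that the quantitative gap $1/\kappa<1/s$ in case (i), or the strict inequality $\delta>C(\lambda)$ in case (ii), enters in a decisive way, providing the room needed for these combinatorial estimates.
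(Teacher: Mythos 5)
Your reduction to the $L^{2}$--boundedness of $T=E_{\rho-\delta e_2,m}\circ e^{\lambda}(x,D)\circ E_{\rho,m}^{-1}$ is formally exact, but the step where you expand $B=e^{\rho_2\langle x\rangle^{1/s}}e^{\rho_1\langle D\rangle^{1/\theta}}\circ e^{\lambda}(x,D)\circ e^{-\rho_1\langle D\rangle^{1/\theta}}e^{-\rho_2\langle x\rangle^{1/s}}$ by invoking Theorem \ref{theorem_symbolic_calculus_of_infinte_order} is a genuine gap. That theorem only provides a calculus for compositions of two symbols lying in the \emph{same} type of infinite-order class: both in $\SG^{\tau,\infty}_{\mu,\nu;\kappa}$ (exponential growth in $x$ only), or both in $\SG^{\infty,\tau}_{\mu,\nu;\theta}$ (exponential growth in $\xi$ only), or both of finite order. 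In $B$ you must compose $e^{\pm\rho_1\langle D\rangle^{1/\theta}}$, whose symbol grows exponentially in $\xi$, with $e^{\lambda}(x,D)$ and $e^{\pm\rho_2\langle x\rangle^{1/s}}$, whose symbols grow exponentially in $x$; the intermediate symbols then grow exponentially in \emph{both} variables, and no result in the paper (nor the cited appendix) gives an asymptotic expansion with a regularizing remainder for such mixed compositions. This is not a cosmetic point: the paper deliberately arranges its conjugations (Section \ref{section_conjugation_of_iP}) so that $e^{\rho_1\langle D\rangle^{\frac1\theta}}$ is only ever conjugated against \emph{finite-order} symbols, precisely to avoid developing a two-sided infinite-order calculus. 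Also note that the terms of your would-be expansion, such as $\partial_\xi^{j}e^{\rho_1\langle\xi\rangle^{1/\theta}}D_x^{j}\bigl(e^{\lambda}\cdots\bigr)$, still carry the full factor $e^{\lambda}$ (and, before cancellation is justified, the factor $e^{\rho_1\langle\xi\rangle^{1/\theta}}$), so "strictly lower order in the $\SG$-scale" cannot be asserted without exactly the missing calculus; and for $\rho_2<0$ or $\rho_1<0$ the "decaying" outer weights become growing ones, so no sign assumption rescues the argument.

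The paper's own proof avoids all of this and is much more elementary: it never conjugates. One splits the symbol itself, $e^{\lambda}=\phi(x)e^{\lambda}+(1-\phi(x))e^{\lambda}$ with a Gevrey cut-off $\phi$ supported in $|x|\leq K$, observes that $\phi e^{\lambda}\in\SG^{0,0}_{\mu}$, and on the support of $1-\phi$ (with $K$ large in case (i), or using $\delta>C(\lambda)$ in case (ii)) writes $(1-\phi)e^{\lambda}=e^{\delta\langle x\rangle^{1/s}}\,\tilde a_2(x,\xi)$ with $\tilde a_2=(1-\phi)e^{\lambda-\delta\langle x\rangle^{1/s}}$ of order $(0,0)$. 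Then $e^{\lambda}(x,D)u=a_1(x,D)u+e^{\delta\langle x\rangle^{1/s}}\tilde a_2(x,D)u$, and Theorem \ref{theorem_continuity_finite_order_in_gelfand_shilov_sobolev_spaces} applied to the finite-order operators $a_1(x,D)$, $\tilde a_2(x,D)$ gives the mapping into $H^{m}_{\rho-\delta e_2;s,\theta}$ directly, the loss $\delta$ being carried entirely by the explicit weight $e^{\delta\langle x\rangle^{1/s}}$. Your observation that $e^{\lambda-\delta\langle x\rangle^{1/s}}$ is bounded under the hypotheses of (i) and (ii) is exactly the right mechanism, but it should be exploited at the level of the symbol $e^{\lambda}$ itself (as above), not after a conjugation whose expansion the available calculus cannot justify. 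If you wish to keep your conjugation scheme, you would first have to prove a composition/conjugation theorem for symbols with exponential growth in both $x$ and $\xi$, which is a substantial piece of machinery not contained in this paper.
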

\begin{proof}
(i) Let $\phi(x)\in \gamma^\mu(\R^n)$, that is, a uniform Gevrey function of index $\mu$, be a cut-off function such that, for a positive constant $K$, $\phi(x)=1$ for $|x|<K/2$, $\phi(x)=0$ for $|x|>K$ and $0\leq\phi(x)\leq 1$ for every $x\in\R^n$. We split the symbol $e^{\lambda(x,\xi)}$ as
\beqs\label{l1}e^{\lambda(x,\xi)} = \phi(x)e^{\lambda(x,\xi)} +(1-\phi(x))e^{\lambda(x,\xi)} = a_1(x,\xi)+ a_2(x,\xi).
\eeqs
Since $\phi$ has compact support and $\lambda$ has order zero with respect to $\xi$, we have $a_1 \in \SG_\mu^{0,0}$. 
On the other hand, given any $\delta>0$ and choosing $K$ large enough, since $\kappa>s$ we may write $|\lambda(x,\xi)|\langle x\rangle^{-1/s}<\delta$ on the support of $a_2(x,\xi)$.  Hence we obtain
$$a_2(x,\xi)=e^{\delta\langle x\rangle^{1/s}}(1-\phi(x))e^{\lambda(x,\xi)-\delta\langle x\rangle^{1/s}},$$
with $(1-\phi(x))e^{\lambda(x,\xi)-\delta\langle x\rangle^{1/s}}$
of order $(0,0)$ because $\lambda(x,\xi)-\delta\langle x\rangle^{1/s}<0$ on the support of
$(1-\phi(x))$. Thus, \eqref{l1} becomes
\beqsn e^{\lambda(x,\xi)}= a_1(x,\xi)+e^{\delta\langle x\rangle^{1/s}}\tilde a_2(x,\xi),
\eeqsn
$a_1$ and $\tilde a_2$ of order $(0,0)$.
Since by Theorem \ref{theorem_continuity_finite_order_in_gelfand_shilov_sobolev_spaces} the operators $a_1(x,D)$ and $\tilde a_2(x,D)$ map continuously $H^m_{\rho,s,\theta}$ into
itself, then we obtain i). The proof of (ii) follows a similar argument and can be found in \cite[Theorem 2.4]{ascanelli_cappiello_schrodinger_equations_Gelfand_shillov}.
\end{proof}
 
\section{Spectral invariance for $SG$-$\Psi$DO with Gevrey estimates}\label{section_spectral_invariance_for_SG_with_Gevrey_estimates}

Let $p \in \SG^{0,0}(\R^{2n})$, then $p(x, D)$ extends to a continuous operator on $L^{2}(\R^{n})$. Suppose that $p(x,D): L^{2}(\R^{n}) \to L^{2}(\R^{n})$ is bijective. The question is to determine whether or not the inverse $p^{-1}$ is also an $\SG$ operator of order $(0,0)$. This is known as the spectral invariance problem and it has an affirmative answer, see \cite{dasgupta_wong_spectral_invariance_SG_pseduo_diff_operators}. 

Following the ideas presented in \cite[pp. 51-57]{dasgupta_wong_spectral_invariance_SG_pseduo_diff_operators}, we will prove that the symbol of $p^{-1}$ satisfies Gevrey estimates, whenever the symbol $p \in \SG^{0,0}_{\mu, \nu}(\R^{2n})$. This is an important step in the study of the continuous mapping properties of $\{e^{\Lambda}(x,D)\}^{-1}$ on Gelfand-Shilov Sobolev spaces $H^m_{\rho;s,\theta}$.

Theorems \ref{theorem_aktinson}, \ref{theorem_composition_fredholm_operators}, \ref{theorem_equivalence_ellipticity_fredholmness} here below can be found in \cite[Chapters 20, 21]{wong_book_introduction_to_pseudo_diff_operators}.

\begin{theorem}\label{theorem_aktinson}
	Let $X, Y$ separable Hilbert spaces. Then a bounded operator $A: X \to Y$ is Fredholm if and only if there are $B: Y \to X$ bounded, $K_1: X \to Y$ and $K_2: Y \to X$ compact operators such that
	$$
	BA = I_X - K_1, \qquad AB = I_{Y} - K_2.
	$$ 
\end{theorem}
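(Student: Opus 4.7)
The plan is to prove the two implications separately, handling the forward direction by orthogonal decomposition and the converse by Riesz--Schauder spectral theory for compact operators.

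For the ($\Rightarrow$) direction, since $A$ is Fredholm, $\ker A$ is closed and finite-dimensional, so one has an orthogonal splitting $X = \ker A \oplus (\ker A)^\perp$. Likewise, $\operatorname{range}(A)$ being closed with finite codimension yields $Y = \operatorname{range}(A) \oplus \operatorname{range}(A)^\perp$. The restriction $\tilde A := A|_{(\ker A)^\perp} \colon (\ker A)^\perp \to \operatorname{range}(A)$ is a bounded bijection between Hilbert spaces, so the open mapping theorem produces a bounded inverse $\tilde A^{-1}$. I would then define $B := \tilde A^{-1} \circ P$, where $P \colon Y \to \operatorname{range}(A)$ is the orthogonal projection (extended by $0$ on $\operatorname{range}(A)^\perp$). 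A direct calculation gives
$$
BA = I_X - P_{\ker A}, \qquad AB = I_Y - P_{\operatorname{range}(A)^\perp},
$$
and both correction terms are finite rank, hence compact. This produces the required parametrix.

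For the ($\Leftarrow$) direction, given $B, K_1, K_2$ as in the statement, any $x \in \ker A$ satisfies $x = BAx + K_1 x = K_1 x$, so $\ker A \subseteq \ker(I_X - K_1)$, which is finite-dimensional by the Riesz--Schauder theorem applied to the compact operator $K_1$. For the cokernel, the identity $AB = I_Y - K_2$ forces $\operatorname{range}(A) \supseteq \operatorname{range}(I_Y - K_2)$, and the latter is closed with finite codimension in $Y$ by Riesz--Schauder; hence $\operatorname{range}(A)$ has finite codimension. To conclude that $\operatorname{range}(A)$ is also topologically closed, I would consider the bounded bijection $X/\ker A \oplus V \to Y$, $(x + \ker A, v) \mapsto Ax + v$, where $V$ is any finite-dimensional algebraic complement of $\operatorname{range}(A)$; by the open mapping theorem its inverse is continuous, so the image $\operatorname{range}(A)$ of the closed subspace $X/\ker A \oplus \{0\}$ must be closed.

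The principal difficulty I expect lies precisely in this closed-range step in the converse, since having finite algebraic codimension does not of itself yield topological closedness without using both the closedness of $\ker A$ and an open mapping argument. Everything else reduces to routine manipulations of orthogonal projections together with the classical Riesz--Schauder spectral theory for compact operators; the approach mirrors the Atkinson-type characterization of invertibility in the Calkin algebra.
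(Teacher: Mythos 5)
The paper does not actually prove this statement: Theorem~\ref{theorem_aktinson} is recorded as a known result (Atkinson's characterization of Fredholm operators) with a citation to Wong's book, so there is no paper proof to compare against. Your blind proof is, however, correct and is essentially the classical argument.

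Both implications check out. In the forward direction, the bounded right/left inverse $B := \tilde A^{-1}\circ P$ satisfies $BA = I_X - P_{\ker A}$ and $AB = I_Y - P_{\operatorname{range}(A)^\perp}$, and finite-rank projections are compact; nothing is missing. In the converse, you correctly deduce $\ker A \subseteq \ker(I_X - K_1)$, and the Riesz--Schauder theorem gives finite dimensionality; the inclusion $\operatorname{range}(A) \supseteq \operatorname{range}(I_Y-K_2)$ gives finite codimension. Your treatment of the closed-range step is sound: $\ker A$ is closed (being finite-dimensional), so $X/\ker A$ is a Banach space, the map $(x+\ker A, v)\mapsto Ax+v$ into $Y$ is a bounded bijection of Banach spaces, and the open mapping theorem makes it a homeomorphism, sending the closed subspace $X/\ker A \oplus \{0\}$ onto $\operatorname{range}(A)$. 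You could shorten this step by observing directly that any subspace of $Y$ containing a closed finite-codimensional subspace $W$ is of the form $W + F$ with $F$ finite-dimensional, hence closed; that avoids invoking the open mapping theorem a second time. Either route is standard, and your self-identified concern about algebraic versus topological codimension is exactly the right thing to guard against and is handled correctly.
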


\begin{theorem}\label{theorem_composition_fredholm_operators}
	Let $X, Y, Z$ be separable Hilbert spaces and let $A: X \to Y$, $B: Y \to Z$ be Fredholm operators. Then: 
	\begin{itemize}
		\item $B \circ A : X \to Z$ is Fredholm and $i(BA) = i(B) + i(A)$, where $i(\cdot)$ stands for the index of a Fredholm operator;
		\item $Y = N(A^{t}) \oplus R(A)$.
	\end{itemize}
\end{theorem}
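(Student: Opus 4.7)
My plan is to reduce both assertions to standard consequences of Atkinson's theorem (Theorem \ref{theorem_aktinson}) together with elementary Hilbert space facts.

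For the first bullet, I would apply Atkinson's theorem to both $A$ and $B$ to obtain bounded parametrices $A' : Y \to X$ and $B' : Z \to Y$ satisfying $A'A = I_X - K_1$, $AA' = I_Y - K_2$, $B'B = I_Y - K_3$, $BB' = I_Z - K_4$ with all $K_j$ compact. The natural candidate parametrix for $BA$ is $A'B'$, and a direct computation, combined with the fact that the compact operators form a two-sided ideal, gives $(A'B')(BA) = A'A - A'K_3A$ and $(BA)(A'B') = BB' - BK_2B'$, both of which equal the identity modulo a compact operator. Atkinson's theorem then yields that $BA$ is Fredholm. To establish the index identity, I would construct the classical six-term exact sequence
\[
0 \to N(A) \hookrightarrow N(BA) \xrightarrow{A} N(B) \xrightarrow{q} Y/R(A) \xrightarrow{\bar{B}} Z/R(BA) \xrightarrow{\pi} Z/R(B) \to 0,
\]
where $q$ is the quotient map $y \mapsto y + R(A)$, $\bar{B}$ is induced by $B$ (well defined because $B(R(A)) \subseteq R(BA)$), and $\pi$ is the natural projection (well defined since $R(BA) \subseteq R(B)$). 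After verifying exactness at each node, all six spaces are finite dimensional by the Fredholm hypothesis, and the alternating sum of their dimensions vanishes, which rearranges to $i(BA) = i(A) + i(B)$.

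The second bullet is a closed-range statement. Since $A$ is Fredholm, $R(A)$ has finite codimension in $Y$ and is therefore closed. The Hilbert space orthogonal decomposition then yields $Y = R(A) \oplus R(A)^{\perp}$, and the standard identity $R(A)^{\perp} = N(A^{t})$, valid whenever $R(A)$ is closed, finishes the argument.

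The main obstacle lies in verifying exactness of the six-term sequence at the two middle nodes $N(B)$ and $Y/R(A)$. Exactness at $N(A)$, $N(BA)$, $Z/R(BA)$ and $Z/R(B)$ follows essentially from the definitions, but at $N(B)$ one must show that an element of $N(B)$ lies in $\ker q$ precisely when it is of the form $Ax$ with $x \in N(BA)$, which is a short diagram chase; a symmetric check is then required at $Y/R(A)$ to match $\ker \bar{B}$ with the image of $q$. Once these two exactness verifications are complete, the rest of the argument is bookkeeping with the parametrices and a finite-dimensional dimension count.
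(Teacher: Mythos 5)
The paper itself offers no proof of this statement: Theorems \ref{theorem_aktinson}--\ref{theorem_equivalence_ellipticity_fredholmness} are quoted as background facts with a citation to Wong's book (Chapters 20 and 21), so there is no in-paper argument to compare yours against. Your sketch is the standard textbook proof and is essentially sound: composing the Atkinson parametrices and using that the compact operators form a two-sided ideal exhibits $A'B'$ as a parametrix for $BA$, and the six-term sequence you propose is indeed exact --- the two middle verifications you flag reduce to the identities $A(N(BA)) = R(A)\cap N(B)$ and $\ker \bar{B} = (N(B)+R(A))/R(A)$, each a one-line computation --- so the alternating dimension count yields $i(BA)=i(A)+i(B)$ (in the cokernel form of the index, which matches the paper's convention $i(p)=\dim N(p)-\dim N(p^{t})$ once the second bullet is available). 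Two points deserve more care than your wording gives them. First, ``finite codimension, therefore closed'' is false for arbitrary subspaces (kernels of discontinuous functionals are dense of codimension one); what you need is the standard lemma that the range of a bounded operator admitting a finite-dimensional algebraic complement is closed, proved by applying the open mapping theorem to the induced bijection $X/N(A)\oplus F \to Y$, or else take closedness of $R(A)$ as part of the definition of Fredholm. Second, for $Y=N(A^{t})\oplus R(A)$ you must interpret $A^{t}$ as the Hilbert-space adjoint (Wong's ``true adjoint''), as you implicitly do when invoking $R(A)^{\perp}=N(A^{t})$; with the bilinear transpose the statement can fail over $\C$, e.g. $A(z_1,z_2)=(z_1,iz_1)$ on $\C^{2}$ has $N(A^{t})=R(A)=\mathrm{span}\{(1,i)\}$, so no direct sum decomposition holds. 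With those two clarifications, your argument is complete and is the same classical route the cited reference takes.
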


\begin{remark}\label{remark_identity_minus_compact_Fredholm}
	Let $X$ be a Hilbert space and $K: X \to X$ a compact operator. Then $I-K$ is Fredholm and $i(I-K) = 0$.
\end{remark}

\begin{theorem}\label{theorem_equivalence_ellipticity_fredholmness}
	Let $p \in \SG^{m_1, m_2}(\R^{2n})$ such that $p(x,D): H^{s_1+m_1,s_2+m_2}(\R^{n}) \to H^{m_1,m_2}(\R^{n})$ is Fredholm for some $s_1, s_2 \in \R$. Then $p$ is $\SG$-elliptic, that is there exist $C,R>0$ such that
$$|p(x,\xi)| \geq C \langle \xi \rangle^{m_1} \langle x \rangle^{m_2} \qquad \textit{for}\quad (x,\xi) \in Q_R^e.$$ 
\end{theorem}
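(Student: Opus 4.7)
The plan is to argue the contrapositive via a phase-space concentration argument. Suppose $p$ is not $\SG$-elliptic; then for every $n\in\N$ we can find $(x_n,\xi_n)\in Q_n^e$ with $|p(x_n,\xi_n)|\langle\xi_n\rangle^{-m_1}\langle x_n\rangle^{-m_2}<1/n$, so that $\max(\langle x_n\rangle,\langle\xi_n\rangle)\to\infty$. Passing to a subsequence we may assume we are in one of the regimes: (I) both $\langle x_n\rangle,\langle\xi_n\rangle\to\infty$; (II) only $\langle x_n\rangle\to\infty$, with $\xi_n$ bounded; or (III) only $\langle\xi_n\rangle\to\infty$, with $x_n$ bounded. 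The goal is to produce a bounded sequence $\{u_n\}\subset H^{s_1+m_1,s_2+m_2}(\R^n)$ having no convergent subsequence but satisfying $p(x,D)u_n\to 0$ in $H^{m_1,m_2}(\R^n)$; together with Theorem \ref{theorem_aktinson} this will contradict the Fredholmness of $p(x,D)$.

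I would realize $u_n$ as normalized wave packets
$$u_n(x)=c_n\,\phi\!\left(\frac{x-x_n}{r_n}\right)e^{i\, x\cdot\xi_n},$$
where $\phi\in\cS(\R^n)$ is fixed with $\|\phi\|_{L^2}=1$, the constant $c_n$ is chosen so that $\|u_n\|_{H^{s_1+m_1,s_2+m_2}}=1$, and the scale $r_n$ is tuned to the regime at hand so that $r_n\ll\langle x_n\rangle$ and $r_n^{-1}\ll\langle\xi_n\rangle$ whenever the corresponding quantity tends to infinity (in regimes (II), (III) one of the two conditions is trivially satisfied by choosing $r_n$ of constant order). A Taylor expansion of $p$ about $(x_n,\xi_n)$ inside the microlocal box of size $r_n\times r_n^{-1}$, combined with the symbolic calculus specialized to the finite-order $\SG^{m}$-classes and with the continuity statement of Theorem \ref{theorem_continuity_finite_order_in_gelfand_shilov_sobolev_spaces}, yields a decomposition $p(x,D)u_n=p(x_n,\xi_n)u_n+R_n$ with $\|R_n\|_{H^{m_1,m_2}}=o(\|u_n\|_{H^{s_1+m_1,s_2+m_2}})$. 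Together with $|p(x_n,\xi_n)|\langle\xi_n\rangle^{-m_1}\langle x_n\rangle^{-m_2}\to 0$ and a sharp two-sided control of the weighted Sobolev norms of $u_n$, this yields $\|p(x,D)u_n\|_{H^{m_1,m_2}}\to 0$.

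To close the argument, I would invoke Theorem \ref{theorem_aktinson}: Fredholmness provides a bounded operator $B$ and a compact operator $K_1$ on $H^{s_1+m_1,s_2+m_2}$ with $Bp(x,D)=I-K_1$. Writing $u_n=K_1u_n+Bp(x,D)u_n$, the second term tends to $0$ in $H^{s_1+m_1,s_2+m_2}$ by the previous step and the boundedness of $B$, while $\{K_1u_n\}$ admits a convergent subsequence by compactness. Hence $\{u_n\}$ itself would have a convergent subsequence, contradicting the fact that wave packets concentrating at infinity in phase space converge weakly to $0$ against any element of $H^{-s_1-m_1,-s_2-m_2}$ while retaining unit norm. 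The principal technical obstacle I foresee is the tuning of the scale $r_n$ simultaneously to the three regimes (I)--(III) so that the remainder $R_n$ is genuinely lower order \emph{after} normalization; this requires matched two-sided estimates of $\|u_n\|_{H^{s_1+m_1,s_2+m_2}}$ and of the Taylor remainder of $p$, with extra care in regimes (II) and (III) where only one of the two microlocal scales tends to infinity.
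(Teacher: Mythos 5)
The paper does not prove this theorem at all: it is quoted from \cite[Chapters 20, 21]{wong_book_introduction_to_pseudo_diff_operators}, so there is no internal proof to compare with. Your strategy (negate ellipticity, build normalized wave packets $u_n$ concentrated at points $(x_n,\xi_n)\in Q_n^e$ where the normalized symbol is small, show $p(x,D)u_n\to 0$, and contradict Fredholmness via Theorem \ref{theorem_aktinson} together with the fact that $u_n\rightharpoonup 0$ while $\|u_n\|=1$) is exactly the classical argument behind the cited result, and the endgame with $Bp(x,D)=I-K_1$ is fine.

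There is, however, a genuine flaw in the step you yourself flag as delicate: the claim that in regimes (II) and (III) one may take $r_n$ of constant order. Smallness of $p$ at the single point $(x_n,\xi_n)$ gives no control of $p$ over a unit-size spatial (resp.\ frequency) neighbourhood, because $\partial_x p$ (resp.\ $\partial_\xi p$) only gains a factor $\langle x\rangle^{-1}$ (resp.\ $\langle\xi\rangle^{-1}$), which is not small when $x_n$ (resp.\ $\xi_n$) stays bounded. Concretely, in regime (III) take $p(x,\xi)=\sin x\in \SG^{0,0}(\R^{2})$, $x_n=0$, $\xi_n\to\infty$, $s_1=s_2=m_1=m_2=0$ (precisely the case used later in Theorem \ref{theorem_spectral_invariance_SG_operator_with_gevrey_estimates}): then $p(x_n,\xi_n)=0$, but with $r_n\equiv 1$ one has $p(x,D)u_n=\sin(\cdot)\,u_n$ and $\|\sin(\cdot)u_n\|_{L^2}\asymp\|u_n\|_{L^2}$, so $\|p(x,D)u_n\|$ does not tend to $0$; an analogous example with a compactly supported $\xi$-dependence kills the constant-$r_n$ choice in regime (II). The correct tuning is $r_n\to\infty$ with $r_n\ll\langle x_n\rangle$ in regime (II) and $r_n\to 0$ with $r_n^{-1}\ll\langle\xi_n\rangle$ in regime (III); in fact the single choice $r_n=\left(\langle x_n\rangle/\langle\xi_n\rangle\right)^{1/2}$ removes the case distinction, since both relative error factors $r_n/\langle x_n\rangle$ and $(r_n\langle\xi_n\rangle)^{-1}$ then equal $(\langle x_n\rangle\langle\xi_n\rangle)^{-1/2}\to 0$ because $(x_n,\xi_n)\in Q_n^e$. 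One further small point: for the normalized main term to be $O(1/n)$ in the stated target $H^{m_1,m_2}$ you implicitly use $s_i\geq m_i$ (which is what makes $p(x,D)$ bounded into $H^{m_1,m_2}$ in the first place; equivalently, read the natural target $H^{s_1,s_2}$). With these corrections your argument closes.
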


\begin{theorem}\label{theorem_parametrices_for_SG_elliptic_with_gevrey_estimates}
	Let $p \in \SG^{m_1, m_2}_{\mu,\nu}(\R^{2n})$ be $\SG$-elliptic. Then there is $q \in \SG^{-m_1, -m_2}_{\mu,\nu}(\R^{2n})$ such that 
	$$
	p(x,D) \circ q(x,D) = I + r_1(x,D), \qquad q(x,D) \circ p(x,D) = I + r_2(x,D),
	$$
	where $r_1, r_2 \in S_{\mu+\nu-1}(\R^{2n})$.
\end{theorem}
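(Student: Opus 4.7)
The plan is to construct the parametrix $q$ by the classical formal Neumann-series / Leibniz recursion, adapted to the finite-order $\SG$-symbol class with Gevrey estimates, and then to invoke the asymptotic-summation machinery (Proposition \ref{Proposition_existence_of_a_stymbol_which_has_an_given_asymptotica_expansion} and the $\sim 0$-implies-Schwartz statement that follows it) for the classes $FSG^{m}_{\mu,\nu}$.

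First I would choose a Gevrey cutoff $\psi\in C^{\infty}(\R^{2n})$ of order $\mu$ in $\xi$ and $\nu$ in $x$, with $\psi\equiv 0$ on $Q_{2R}$ and $\psi\equiv 1$ on $Q_{3R}^{e}$, where $R$ is the radius coming from the $\SG$-ellipticity of $p$. I set
$$
q_{0}(x,\xi):=\frac{\psi(x,\xi)}{p(x,\xi)}
$$
and, by applying the Fa\`a di Bruno formula to $1/p$ on $Q_{3R}^{e}$ and using the hypothesis $|p(x,\xi)|\geq C\langle\xi\rangle^{m_{1}}\langle x\rangle^{m_{2}}$ together with the Gevrey symbol bounds on $p$, I would verify that $q_{0}\in \SG^{-m_{1},-m_{2}}_{\mu,\nu}(\R^{2n})$. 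Then I define recursively, for $N\geq 1$,
$$
q_{N}(x,\xi):=-\,q_{0}(x,\xi)\!\!\sum_{\substack{j+|\alpha|=N\\ j<N}}\frac{1}{\alpha!}\,\partial^{\alpha}_{\xi}p(x,\xi)\,D^{\alpha}_{x}q_{j}(x,\xi).
$$

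The central estimate to establish is that $\sum_{j\geq 0}q_{j}\in FSG^{-m_{1},-m_{2}}_{\mu,\nu}(\R^{2n})$, i.e.\ that there exist $C,B>0$ such that
$$
|\partial^{\alpha}_{\xi}\partial^{\beta}_{x}q_{j}(x,\xi)|\leq C^{|\alpha|+|\beta|+2j+1}\alpha!^{\mu}\beta!^{\nu}j!^{\mu+\nu-1}\langle\xi\rangle^{-m_{1}-|\alpha|-j}\langle x\rangle^{-m_{2}-|\beta|-j}
$$
on $Q^{e}_{B(j)}$. This is the main obstacle: it requires a combinatorial induction on $j$, in which the key point is to track the $j!^{\mu+\nu-1}$-loss arising from the $\alpha$-derivatives in the recursion, to combine it with the $\mu$-Gevrey bounds from $\partial^{\alpha}_{\xi}p$ and the $\nu$-Gevrey bounds from $D^{\alpha}_{x}q_{k}$, and to reabsorb the resulting factorial factors using the Vandermonde-type estimate $\sum_{k+|\alpha|=j}\binom{j}{k}\leq 2^{j}$ after an appropriate rearrangement. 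The enlarged exclusion region $Q^{e}_{B(j)}$ with $B(j)=Bj^{\mu+\nu-1}$ is exactly what is needed to convert powers of $j$ coming from derivatives landing on the cutoff into powers of $\langle\xi\rangle+\langle x\rangle$, keeping the homogeneity in $j$ balanced.

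Once $\sum_{j}q_{j}\in FSG^{-m_{1},-m_{2}}_{\mu,\nu}$ is available, Proposition \ref{Proposition_existence_of_a_stymbol_which_has_an_given_asymptotica_expansion} (in its $FSG^{m}_{\mu,\nu}$ version) yields a symbol $q\in \SG^{-m_{1},-m_{2}}_{\mu,\nu}(\R^{2n})$ with $q\sim\sum_{j}q_{j}$ in $FSG^{-m_{1},-m_{2}}_{\mu,\nu}$. Using Theorem \ref{theorem_symbolic_calculus_of_infinte_order} (finite-order version) I compute
$$
p\#q\,\sim\,\sum_{\alpha}\frac{1}{\alpha!}\partial^{\alpha}_{\xi}p\,D^{\alpha}_{x}q\,\sim\,\sum_{N\geq 0}\sum_{j+|\alpha|=N}\frac{1}{\alpha!}\partial^{\alpha}_{\xi}p\,D^{\alpha}_{x}q_{j}\,\,\text{in}\,\,FSG^{0,0}_{\mu,\nu},
$$
and by the very definition of the $q_{N}$, this last double series is asymptotically equal to $1$. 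Hence $r_{1}:=p\#q-1\in \SG^{0,0}_{\mu,\nu}$ satisfies $r_{1}\sim 0$ in $FSG^{0,0}_{\mu,\nu}$, and the $FSG^{m}_{\mu,\nu}$-analogue of the proposition preceding Theorem \ref{theorem_symbolic_calculus_of_infinte_order} gives $r_{1}\in\mathcal{S}_{\mu+\nu-1}(\R^{2n})$.

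Finally, to produce a two-sided parametrix I repeat the construction from the other side, obtaining $\tilde q\in \SG^{-m_{1},-m_{2}}_{\mu,\nu}$ with $\tilde q\#p=I+\tilde r_{2}$, $\tilde r_{2}\in\mathcal{S}_{\mu+\nu-1}$. The standard argument $\tilde q=\tilde q\#p\#q-\tilde q\#r_{1}=q+\tilde r_{2}\#q-\tilde q\#r_{1}$ together with the fact that $\mathcal{S}_{\mu+\nu-1}$ is a two-sided ideal in the calculus shows that $q-\tilde q\in\mathcal{S}_{\mu+\nu-1}$, so $r_{2}:=q\#p-I=\tilde r_{2}+(q-\tilde q)\#p$ also belongs to $\mathcal{S}_{\mu+\nu-1}(\R^{2n})$, which completes the proof. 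The bulk of the work is the inductive Gevrey estimate for the $q_{j}$; everything else is formal assembly using the results already cited.
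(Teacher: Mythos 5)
The paper does not give a proof of this theorem; it simply cites Theorem~6.3.16 of Nicola--Rodino \cite{nicola_rodino_global_pseudo_diffferential_calculus_on_euclidean_spaces}. Your proposal reconstructs the standard SG--Gevrey parametrix construction that underlies that citation (cutoff inverse $q_0=\psi/p$, the recursive definition of the $q_N$, asymptotic summation in the $FSG^{m}_{\mu,\nu}$ class, and the usual two-sided upgrade), and the route is correct; the single step you compress --- the inductive $j!^{\mu+\nu-1}$-bound proving $\sum_{j}q_{j}\in FSG^{-m_1,-m_2}_{\mu,\nu}$, together with the Fa\`a di Bruno estimate giving $q_0\in \SG^{-m_1,-m_2}_{\mu,\nu}$ --- is indeed the entire technical content of the result, and is precisely what the cited reference supplies.
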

\begin{proof}
	See \cite[Theorem 6.3.16]{nicola_rodino_global_pseudo_diffferential_calculus_on_euclidean_spaces}.
\end{proof}

In order to prove the main result of this section, we need the following technical lemma.

\begin{lemma}\label{lemma_A_given_by_kernel_in_projective_gelfand_shilov}
	Let $A : L^{2}(\R^n) \to L^{2}(\R^n)$ be a bounded operator such that $A$ and $A^{*}$ map $L^2(\R^n)$ into $\Sigma_{r}(\R^n)$ continuously. Then the Schwartz kernel of $A$ belongs to $\Sigma_{r}(\R^{2n})$.
\end{lemma}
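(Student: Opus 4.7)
The plan is to combine Riesz-based estimates extracted from the two hypotheses with a Sobolev embedding argument to control the kernel pointwise.

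First, I would exploit the continuity of $A:L^2(\R^n)\to\Sigma_r(\R^n)$: for every $A_1,B_1>0$ there is a constant $C_1=C_1(A_1,B_1)>0$ such that
\[
|x^\beta\partial_x^\alpha Au(x)|\le C_1 A_1^{|\alpha|}B_1^{|\beta|}\alpha!^r\beta!^r\,\|u\|_{L^2},\qquad u\in L^2,\ x\in\R^n,\ \alpha,\beta\in\N_0^n.
\]
For fixed $x,\alpha,\beta$ the map $u\mapsto x^\beta\partial_x^\alpha Au(x)$ is a bounded linear functional on $L^2$, so by the Riesz representation theorem, identifying the representing function via $Au(x)=\int K_A(x,y)u(y)\,dy$, one obtains
\[
\|x^\beta\partial_x^\alpha K_A(x,\cdot)\|_{L^2(\R^n_y)}\le C_1 A_1^{|\alpha|}B_1^{|\beta|}\alpha!^r\beta!^r,\quad x\in\R^n.
\]
The symmetric argument applied to the continuity of $A^*:L^2\to\Sigma_r$ yields
\[
\|y^\delta\partial_y^\gamma K_A(\cdot,y)\|_{L^2(\R^n_x)}\le C_2 A_2^{|\gamma|}B_2^{|\delta|}\gamma!^r\delta!^r,\quad y\in\R^n.
\]

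Next I would combine these one-sided bounds into joint $L^2$-Gevrey estimates
\[
\|x^\beta y^\delta\partial_x^\alpha\partial_y^\gamma K_A\|_{L^2(\R^{2n})}\le C A_1^{|\alpha|}A_2^{|\gamma|}B_1^{|\beta|}B_2^{|\delta|}\alpha!^r\beta!^r\gamma!^r\delta!^r.
\]
Global integrability in $x$ comes for free: taking $|\beta|$ arbitrarily large in the first bound produces $\|\partial_x^\alpha K_A(x,\cdot)\|_{L^2_y}\le C_N\langle x\rangle^{-N}$ for any $N$, and integration gives $\partial_x^\alpha K_A\in L^2(\R^{2n})$; the $y$-side is symmetric. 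The mixed $\partial_x^\alpha\partial_y^\gamma$ case is the delicate one, and I would handle it by reapplying the Riesz step to the composed operator $\partial_x^\alpha A$ (which inherits continuity $L^2\to\Sigma_r$ from $A$ and the fact that differentiation is continuous on $\Sigma_r$), whose kernel is $\partial_x^\alpha K_A$; coupling the resulting bounds with the $y$-side estimates derived from $A^*$ (suitably extended via duality, since $A^*$ is smoothing between every pair of weighted Sobolev spaces) produces the joint mixed-derivative $L^2$-Gevrey estimate.

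Finally I would invoke Sobolev embedding $H^k(\R^{2n})\hookrightarrow L^\infty(\R^{2n})$, valid for $k>n$, to upgrade joint $L^2$-Gevrey bounds to pointwise ones. Bounding $\sup_{x,y}|x^\beta y^\delta\partial_x^\alpha\partial_y^\gamma K_A(x,y)|$ by $\sum_{|\alpha'|+|\gamma'|\le k}\|\partial_x^{\alpha'}\partial_y^{\gamma'}(x^\beta y^\delta\partial_x^\alpha\partial_y^\gamma K_A)\|_{L^2}$ and expanding each term via Leibniz produces a finite sum of quantities of exactly the same type as the joint $L^2$ estimate. Since $k$ is fixed while $\alpha,\beta,\gamma,\delta$ vary, the Gevrey structure survives (with constants that are reabsorbed using the freedom to choose $A_i,B_i$ arbitrarily), giving
\[
|x^\beta y^\delta\partial_x^\alpha\partial_y^\gamma K_A(x,y)|\le C A_1^{|\alpha|}A_2^{|\gamma|}B_1^{|\beta|}B_2^{|\delta|}\alpha!^r\beta!^r\gamma!^r\delta!^r
\]
for every $A_1,A_2,B_1,B_2>0$, which is precisely the statement $K_A\in\Sigma_r(\R^{2n})$.

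The main technical obstacle I expect lies in the mixed-derivative combination step: the one-sided Riesz bounds control only $x$-side or $y$-side derivatives of $K_A$ separately, and a naive integration by parts that tries to combine them transfers derivatives onto the $L^2$ test function, where they cannot be reabsorbed into the $L^2$-pairing. The resolution goes through reapplication of the Riesz argument to derived operators such as $\partial_x^\alpha A$, exploiting that both $A$ and $A^*$ are smoothing on the scale of weighted Sobolev spaces.
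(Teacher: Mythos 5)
Your opening Riesz step is sound and close in spirit to what is needed: from $A\colon L^2\to\Sigma_r$ you correctly get, for every choice of constants, bounds of the type $\sup_x\|x^\beta\partial_x^\alpha K_A(x,\cdot)\|_{L^2_y}\leq C A_1^{|\alpha|}B_1^{|\beta|}\alpha!^r\beta!^r$, and symmetrically in $y$ from $A^*$; the remarks about gaining integrability in the free variable and about the harmlessness of the fixed Sobolev index $k$ in the final embedding step are also fine. The genuine gap is exactly where you locate it, and your proposed fix does not close it. To run "the Riesz step" on $B=x^\beta\partial_x^\alpha A$ and extract $y$-side estimates on its kernel you need $B^*=A^*\circ(-\partial)^\alpha\circ x^\beta$ to map $L^2$ continuously into $\Sigma_r$, i.e. you need $A^*$ to map negative-order weighted Sobolev spaces into $\Sigma_r$. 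But the hypotheses plus duality only give $A^*\colon L^2\to\Sigma_r$ and $A^*\colon\Sigma_r'\to L^2$; they do not give $A^*\colon H^{(-k,-l)}\to\Sigma_r$, and no interpolation or composition of the two available mapping properties produces it. In fact "$A$ and $A^*$ are smoothing between every pair of weighted Sobolev spaces" is essentially equivalent to the conclusion of the lemma (kernel in $\Sigma_r(\R^{2n})$), so invoking it at this point is circular. Separate smoothness/decay of $K_A$ in $x$ (with values in $L^2_y$) and in $y$ (with values in $L^2_x$) simply does not yield the joint mixed bounds by the route you describe.

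For comparison, the paper sidesteps the mixed-derivative issue altogether: it uses the nuclearity of $\Sigma_r$ and the kernel theorem to write $K_A=\sum_j a_jf_j(x)g_j(y)=\sum_j\tilde a_j\tilde f_j(x)\tilde g_j(y)$ with one factor in $\Sigma_r$ and the other in $L^2$ in each representation, and then verifies a characterization of $\Sigma_r(\R^{2n})$ that involves only weighted $L^2$ bounds on $K_A$ and on $\widehat{K_A}$ (no derivatives of the kernel appear): decay in $x$ and in $y$ come from the two representations, and the Fourier-side bounds from transforming the series. If you want to keep your Riesz-based estimates and avoid nuclearity, the natural repair is to adopt that same characterization: your $\alpha=0$ bounds give the weighted $L^2$ decay of $K_A$, and applying the identical Riesz argument to $\mathcal{F}A$ and $\mathcal{F}A^*$ (which still map $L^2$ into $\Sigma_r$, since $\mathcal{F}$ is an isomorphism of $\Sigma_r$) controls the partial Fourier transforms of $K_A$, hence $\widehat{K_A}$ via Plancherel in the remaining variable. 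As written, however, the joint mixed-derivative estimate — the step you yourself flag as the crux — is not proved.
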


\begin{proof}
	Since $\Sigma_{r}(\R^{n}) \subset L^{2}(\R^{n})$ is a nuclear Fr\'echet space, (cf. \cite{debrouwere_nuclearity_gelfand_shilov}), by \cite[Propositions 2.1.7 and 2.1.8 ]{harutyunyan_sculze_book_kernel_theorems}, we have that $A$ is defined by a kernel $H(x,y)$ and we can write 
	$$
	H(x,y) = \sum_{j \in \N_{0}} a_j f_j(x) g_j(y) = \sum_{j \in \N_{0}} \tilde{a}_{j} \tilde{f}_{j}(x) \tilde{g}_{j}(y),
	$$  
	where $a_{j}, \tilde{a}_{j} \in \C$, $\tilde{f}_{j}(x), g_{j}(y) \in \Sigma_{r}(\R^{n})$, $f_{j}(x), \tilde{g}_{j}(y) \in L^{2}(\R^{n})$, $\sum_{j}|a_j| < \infty$, $\sum_{j}|\tilde{a}_{j}| < \infty$, $\tilde{f}_{j}(x), g_{j}(y)$ converge to zero in $\Sigma_{r}(\R^{n})$ and $f_{j}(x), \tilde{g}_{j}(y)$ converge to zero in $L^{2}(\R^{n})$.
	
	We now use the following characterization: $H \in \Sigma_{r}(\R^{2n})$ if and only if 
	$$
	\sup_{\alpha, \beta \in \N^{n}_{0}} \left\| \dfrac{x^{\alpha}y^{\beta}H(x,y)}{C^{|\alpha|+|\beta|}\alpha!^r\beta!^r} \right\|_{L^{2}}  < \infty
	\quad \text{and} \quad
	\sup_{\alpha, \beta \in \N^{n}_{0}} \left\| \dfrac{\xi^{\alpha}\eta^{\beta}\widehat{H}(\xi,\eta)}{C^{|\alpha|+|\beta|}\alpha!^r\beta!^r} \right\|_{L^{2}} < \infty
	$$
	for every $C>0$, and prove that both the latter conditions hold. Note that
	\begin{align*}
	\| y^{\beta}H(x,y) \|^{2}_{L^{2}} &= \iint \left|\sum_{j \in \N_{0}} a_j f_j(x) y^{\beta} g_j(y)\right|^{2} dx\,dy \\
	&= \iint \left|\sum_{j \in \N_{0}} a_j^{\frac{1}{2}} f_j(x) a_j^{\frac{1}{2}} y^{\beta} g_j(y)\right|^{2} dx\,dy \\
	&\leq \int  \sum_{j \in \N_{0}} |a_j^{\frac{1}{2}} f_j(x)|^{2} dx 
	\int  \sum_{j \in \N_{0}} |a_j^{\frac{1}{2}} y^{\beta} g_j(y)|^{2} dy \\
	&= \sum_{j \in \N_{0}} |a_j| \|f_j\|^{2}_{L^{2}} \sum_{j \in \N_{0}} |a_j| \|y^{\beta}g_j\|^{2}_{L^{2}}.
	\end{align*}
	Since $g_j$ converges to zero in $\Sigma_r(\R^{n})$, we have 
	$$
	\| y^{\beta} g_{j}(y) \|_{L^2} = \left\| \dfrac{y^{\beta} g_{j}(y)}{C^{|\beta|}\beta!^{r}} \right\|_{L^2} C^{|\beta|} \beta!^{r} \leq C^{|\beta|} \beta!^{r} \sup_{j \in \N_{0}} \left\| \dfrac{y^{\beta} g_{j}(y)}{C^{|\beta|}\beta!^{r}} \right\|_{L^2},
	$$ 
	for every $C>0$, and therefore 
	$$
	\| y^{\beta}H(x,y) \|_{L^{2}} \leq \left(\sum_{j \in \N_0} |a_j|\right) \sup_{j \in \N_{0}} \| f_j \|_{L^2} 
	\sup_{j \in \N_{0}} \left\| \dfrac{y^{\beta} g_{j}(y)}{C^{|\beta|}\beta!^{r}} \right\|_{L^2} C^{|\beta|} \beta!^{r}. 
	$$
	Hence 
	$$
	\sup_{\beta\in \N^{n}_{0}}\ \left\| \dfrac{y^{\beta} H(x,y)}{C^{|\beta|} \beta!^{r}} \right\|_{L^2} < \infty
	\iff
	\sup_{N \in \N_{0}}\ \left\| \dfrac{ \langle y \rangle^{N} H(x,y)}{C^{N} N!^{r}} \right\|_{L^2} < \infty,
	$$
	for every $C >0$. Using the representation $\sum \tilde{a}_{j} \tilde{f}_{j}(x) \tilde{g}_{j}(y)$, analogously we can obtain
	$$
	\sup_{\alpha \in \N^{n}_{0}}\ \left\| \dfrac{x^{\alpha} H(x,y)}{C^{|\alpha|} \alpha!^{r}} \right\|_{L^2} < \infty
	\iff
	\sup_{N \in \N_{0}}\ \left\| \dfrac{ \langle x \rangle^{N} H(x,y)}{C^{N} N!^{r}} \right\|_{L^2} < \infty,
	$$
	for every $C>0$.
	
	Now note that, for every $N \in \N_0$, $x, y \in \R^{n}$, 
	$$
	\langle x, y \rangle^{N} = (\langle x \rangle^{2} + |y|^{2})^{\frac{N}{2}} \leq (\langle x \rangle + \langle y \rangle)^{N} \leq 2^{N-1}(\langle x \rangle^N + \langle y \rangle^N).$$
	Therefore, for every $C > 0$, 
	$$
	\left \| \frac{\langle x, y \rangle^{N} H(x, y)}{C^{N} N!^{r}} \right\|_{L^{2}} \leq 
 \left\| \frac{\langle x \rangle^{N} H(x, y)}{C^{N}_{1} N!^{r}} \right\|_{L^{2}} + 
	\left\| \frac{\langle y \rangle^{N} H(x, y)}{C^{N}_{1} N!^{r}} \right\|_{L^{2}},
	$$
	where $C_1 = (2^{-1}C)^{N}$. Hence, for every $C > 0$,
	$$
	\sup_{N \in \N_0} \left\| \frac{\langle x, y \rangle^{N} H(x, y)}{C^{N} N!^{r}} \right\|_{L^{2}} < \infty.
	$$
	
	Since the Fourier transformation is an isomorphism on $L^{2}$ and on $\Sigma_{r}$, we have 
	$$
	\widehat{H}(\xi,\eta) = \sum_{j \in \N_{0}} a_j \widehat{f}_j(\xi) \widehat{g}_j(\eta) = \sum_{j \in \N_{0}} \tilde{a}_{j} \widehat{\tilde{f}}_{j}(\xi) \widehat{\tilde{g}}_{j}(\eta),
	$$  
	where $a_{j}, \tilde{a}_{j} \in \C$, $\widehat{\tilde{f}}_{j}(\xi), \widehat{g}_j(\eta) \in \Sigma_{r}(\R^{n})$, $\widehat{f}_j(\xi), \widehat{\tilde{g}}_{j}(\eta) \in L^{2}(\R^{n})$, $\sum_{j}|a_j| < \infty$, $\sum_{j}|\tilde{a}_{j}| < \infty$, $\widehat{\tilde{f}}_{j}(\xi), \widehat{g}_j(\eta)$ converge to zero in $\Sigma_{r}(\R^{n})$ and $\widehat{f}_j(\xi), \widehat{\tilde{g}}_{j}(\eta)$ converge to zero in $L^{2}(\R^{n})$. In an analogous way as before we get, for every $C > 0$,
	$$
	\sup_{N \in \N_0} \left\| \frac{\langle \xi, \eta \rangle^{N} \widehat{H}(\xi, \eta)}{C^{N} N!^{r}} \right\|_{L^{2}} < \infty.
	$$
	Hence $H \in \Sigma_{r}(\R^{2n})$.
\end{proof}

\begin{theorem}\label{theorem_spectral_invariance_SG_operator_with_gevrey_estimates}
	Let $p \in \SG^{0,0}_{\mu,\nu}(\R^{2n})$ such that $p(x,D): L^2(\R^n) \to L^2(\R^n)$ is bijective. Then $\{p(x,D)\}^{-1}: L^2(\R^n) \to L^2(\R^n)$ is a pseudodifferential operator given by a symbol $\tilde{p}= q + \tilde{k}$ where 
	$q \in \SG^{0,0}_{\mu,\nu}(\R^{2n})$ and $\tilde{k} \in \Sigma_{r}(\R^{2n})$ for every $r > \mu + \nu - 1$.
\end{theorem}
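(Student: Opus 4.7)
The plan is to adapt the Dasgupta--Wong spectral invariance argument while tracking Gevrey regularity of all remainders. First, since $P := p(x,D)$ is a bijection on $L^2(\R^n)$, the open mapping theorem gives a bounded two-sided inverse; in particular $P$ has closed range with trivial kernel and cokernel, so $P$ is Fredholm of index $0$. Theorem \ref{theorem_equivalence_ellipticity_fredholmness} then forces $p$ to be $\SG$-elliptic of order $(0,0)$, and Theorem \ref{theorem_parametrices_for_SG_elliptic_with_gevrey_estimates} produces a parametrix $Q = q(x,D)$ with $q \in \SG^{0,0}_{\mu,\nu}(\R^{2n})$ satisfying
\begin{equation*}
P Q = I + R_1, \qquad Q P = I + R_2,
\end{equation*}
where the symbols of $R_1,R_2$ lie in $\mathcal{S}_{\mu+\nu-1}(\R^{2n}) \subset \Sigma_r(\R^{2n})$ for every $r > \mu+\nu-1$ (by the continuous inclusion of the inductive into the projective Gelfand--Shilov space).

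Multiplying the first identity on the left by $P^{-1}$ and the second on the right by $P^{-1}$ yields the two representations of the same bounded operator
\begin{equation*}
K := P^{-1} - Q = -P^{-1} R_1 = -R_2 P^{-1}.
\end{equation*}
Showing that $P^{-1} = Q + K$ has the desired form reduces to proving that $K$ is a pseudodifferential operator with symbol in $\Sigma_r(\R^{2n})$. To this end I would first verify that, since the symbols of $R_1,R_2$ belong to $\mathcal{S}_{\mu+\nu-1}$, their Schwartz kernels enjoy rapid decay and Gevrey bounds of the same type, so that a direct Cauchy--Schwarz estimate on the integral kernel representation gives continuous maps $R_j, R_j^{*}: L^2(\R^n) \to \Sigma_r(\R^n)$ for every $r > \mu+\nu-1$. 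The representation $K = -R_2 P^{-1}$ combined with the $L^2$-boundedness of $P^{-1}$ then gives $K: L^2 \to \Sigma_r$ continuously, while the representation $K = -P^{-1} R_1$ gives $K^{*} = -R_1^{*} (P^{*})^{-1}: L^2 \to \Sigma_r$ continuously as well.

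At this point Lemma \ref{lemma_A_given_by_kernel_in_projective_gelfand_shilov} applies and yields that the Schwartz kernel $H_K$ of $K$ belongs to $\Sigma_r(\R^{2n})$ for every $r > \mu+\nu-1$. Passing from kernel to symbol via $\tilde k(x,\xi) = \mathcal{F}_z\bigl[H_K(x, x - \cdot)\bigr](\xi)$, and exploiting that $\Sigma_r(\R^{2n})$ is stable under affine changes of variables and under partial Fourier transforms, I conclude that $K = \tilde k(x,D)$ with $\tilde k \in \Sigma_r(\R^{2n})$. Writing $P^{-1} = Q + K$ produces the claimed decomposition $\tilde p = q + \tilde k$ with $q \in \SG^{0,0}_{\mu,\nu}(\R^{2n})$ and $\tilde k \in \Sigma_r(\R^{2n})$ for every $r > \mu+\nu-1$.

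The main obstacle is the continuity of the remainders $R_j$ and $R_j^{*}$ into the \emph{projective} space $\Sigma_r$ rather than the inductive $\mathcal{S}_{\mu+\nu-1}$: this is exactly what forces $r$ to be strictly larger than $\mu+\nu-1$ in the final statement and is where the hypotheses of Lemma \ref{lemma_A_given_by_kernel_in_projective_gelfand_shilov} meet the output of Theorem \ref{theorem_parametrices_for_SG_elliptic_with_gevrey_estimates}. Once this continuity is established, the passage from kernel estimates to symbol estimates is standard and does not introduce further loss.
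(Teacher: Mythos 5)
Your proof is correct, but it takes a genuinely more direct route than the paper's. Both arguments share the same first half: bijectivity gives a Fredholm operator of index $0$, Theorem \ref{theorem_equivalence_ellipticity_fredholmness} gives $\SG$-ellipticity, and Theorem \ref{theorem_parametrices_for_SG_elliptic_with_gevrey_estimates} gives a parametrix $q\in\SG^{0,0}_{\mu,\nu}(\R^{2n})$ with remainders $R_1,R_2$ having symbols in $\mathcal{S}_{\mu+\nu-1}(\R^{2n})$. From there the paper follows the Dasgupta--Wong scheme: it corrects $q(x,D)$ by a finite-rank operator $Q$ built from the decompositions $L^2=N(q)\oplus N(q)^{\bot}$ and $L^2=N(q^t)\oplus R_{L^2}(q)$ so that $\tilde q=q(x,D)+Q$ is a bijective parametrix, shows that $k=(I+r')^{-1}-I$ and $k^t$ map $L^2$ into $\Sigma_r$, applies Lemma \ref{lemma_A_given_by_kernel_in_projective_gelfand_shilov} twice (to $Q$ and to $k$), and concludes from $p^{-1}=(I+k)\circ\tilde q$. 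You instead exploit the exact invertibility from the outset and study $K=p(x,D)^{-1}-q(x,D)$, whose two factorizations $K=-p^{-1}R_1=-R_2\,p^{-1}$ give the $L^2\to\Sigma_r$ continuity of $K$ and of $K^{*}=-R_1^{*}(p^{*})^{-1}$, so Lemma \ref{lemma_A_given_by_kernel_in_projective_gelfand_shilov} applies once to $K$ itself. This is shorter: it bypasses the index computation for $q(x,D)$, the construction of the bijective parametrix, and the final composition $(I+k)\circ\tilde q$, whose membership in the stated class the paper leaves essentially implicit. What your route requires instead are the two routine facts you flag: that an operator with symbol in $\mathcal{S}_{\mu+\nu-1}(\R^{2n})$ has Schwartz kernel in $\mathcal{S}_{\mu+\nu-1}(\R^{2n})\subset\Sigma_r(\R^{2n})$, whence $R_j,R_j^{*}\colon L^2\to\Sigma_r$ continuously (the paper itself uses this when showing $N(q)\subset\mathcal{S}_{\mu+\nu-1}$), and that the kernel-to-symbol map $H\mapsto\mathcal{F}_{z\to\xi}\bigl[H(x,x-z)\bigr]$ preserves $\Sigma_r(\R^{2n})$, which holds since $\Sigma_r$ is invariant under invertible linear changes of variables and partial Fourier transforms. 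With these verifications written out, your argument is complete and yields exactly the decomposition $\tilde p=q+\tilde k$ of the statement; the paper's longer route buys nothing additional for this theorem beyond staying closer to the original spectral-invariance argument it cites.
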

\begin{proof}
	Since $p(x,D): L^2(\R^n) \to L^2(\R^n)$ is bijective, then $p(x, D)$ is Fredholm and 
	$$
	i(p(x, D)) = dim N(p(x,D)) - dim N(p^{t}(x,D)) = 0.
	$$
	Therefore by Theorem \ref{theorem_equivalence_ellipticity_fredholmness} $p$ is $\SG$-elliptic and by Theorem \ref{theorem_parametrices_for_SG_elliptic_with_gevrey_estimates} there is $q \in \SG^{0,0}_{\mu, \nu}(\R^{2n})$ such that
	$$
	q(x,D) \circ p(x,D) = I + r(x,D), \qquad p(x,D) \circ q(x,D) = I + s(x,D),
	$$
	for some $r, s \in \mathcal{S}_{\mu+\nu-1}(\R^{2n})$.
	In particular $r(x,D), s(x,D)$ are compact operators on $ L^2(\R^n)$. By Theorem \ref{theorem_aktinson} $q(x,D)$ is a Fredholm operator and we have 
	$$
	i(q(x,D)) = i(q(x,D)) + i(p(x,D)) = i(q(x,D) \circ p(x,D)) = i(I + r(x,D)) = 0.
	$$  
	
	Note that $N(q(x,D))$ and $N(q^{t}(x,D))$ are subspaces of $\mathcal{S}_{\mu+\nu-1}(\R^{n})$. Indeed, let $f \in N(q)$ and $g \in N(q^t)$, then 
	$$
	0 = p(x,D) \circ q(x,D) f = (I+s(x,D))f \implies f = -s(x,D)f,
	$$
	$$
	0 = p^t(x,D) \circ q^t(x,D) g = (q(x,D) \circ p(x,D))^t g = (I+r(x,D))^t g \implies g = -r^{t}(x,D)g. 
	$$
	
	Since $L^2(\R^n)$ is a separable Hilbert space and $N(q(x,D))$ is closed, we have the following decompositions 
	$$
	L^2 = N(q) \oplus N(q)^{\bot}, \qquad L^2 = N(q^t) \oplus R_{L^2}(q),
	$$
	where $R_{L^2}(q)$ denotes the range of $q(x,D)$ as an operator on $L^2(\R^{n})$.
	
	Let $\pi: L^2 \to N(q)$ the projection of $L^2$ onto $N(q)$ with null space $N(q)^{\bot}$, $F: N(q) \to N(q^t)$ an isomorphism and $i: N(q^t) \to L^2$ the inclusion. Set $Q = i\circ F \circ \pi$. Then $Q: L^2 \to L^2$ is bounded and its image is contained in $N(q^t) \subset \mathcal{S}_{\mu+\nu-1}$. It is not difficult to see that $Q^{*} = \tilde{i} \circ F^{*} \circ \pi_{N(q^{t})}$, where $\tilde{i}$ is the inclusion of $N(q)$ into $L^2$ and $\pi_{N(q^{t})}$ is the orthogonal projection of $L^2$ onto $N(q)$. Since $\mathcal{S}_{\mu+\nu-1} \subset \Sigma_{r}$, then  by Lemma \ref{lemma_A_given_by_kernel_in_projective_gelfand_shilov}, $Q$ is given by a kernel in $\Sigma_{r}$.
	
	We will now show that $q + Q$ is a bijective parametrix of $p$. Indeed, let $u = u_1 + u_2 \in N(q)\oplus N(q^t)$ such that $(q + Q) u = 0$. Then $0 = q u_2 + (i\circ F)u_1 \in R_{L^2}(q) \oplus N(q^t)$. Hence $q u_2 = 0$ and $i\circ F u_1 = 0$ which implies that $u = 0$. In order to prove that $Q$ is surjective, consider $f = f_1 + f_2 \in R_{L^2}(q) \oplus N(q^t)$. There exist $u_1 \in L^2$ and $u_2 \in N(q)$ such that $q u_1 = f_1$ and $F u_2 = f_2$. Now write $u_1 = v_1 + v_2 \in N(q) \oplus N(q)^{\bot}$. Then $q(u_1) = q(v_2)$ and therefore $(q+Q)(v_2+u_2) = f_1 + f_2 = f$. Finally notice that 
	$$
	p(x,D) \circ (q(x,D) + Q) = I + s(x,D) + p(x,D) \circ Q = I + s'(x,D),
	$$
	$$
	(q(x,D) + Q) \circ p(x,D) = I + r(x,D) + Q \circ p(x,D) = I + r'(x,D),
	$$
	where $r', s' \in \Sigma_{r}(\R^{2n})$.
	
	Now set $\tilde{q} = q(x,D) + Q$. Therefore $\tilde{q} \circ p(x,D) = I + r'(x,D): L^2 \to L^2$ is bijective. Set $k = -(I+r')^{-1} \circ r'$. Then $(I+r')(I+k) = I$ and $k= - r' - r'k$. Observe that 
	$$
	k^t = -\{r'\}^t - k^t\{r'\}^t = -\{r'\}^t+\{r'\}^t\{(I+r')^{-1}\}^{t}\{r'\}^t.
	$$ 
	Hence $k, k^t$ map $L^2$ into $\Sigma_{r}$ and by Lemma \ref{lemma_A_given_by_kernel_in_projective_gelfand_shilov} we have that $k$ is given by a kernel in $\Sigma_{r}(\R^{2n})$.
	
	To finish the proof, it is enough to notice that 
	$$
	p^{-1} \circ \tilde{q}^{-1}  = (\tilde{q} \circ p)^{-1} = (I+r')^{-1} = I+k \implies p^{-1} = (I+k) \circ \tilde{q}.
	$$
	
\end{proof}


\section{Change of variables}
\label{section_definition_and_estimates_of_lambda_2_lambda_1}
\subsection{Definition and properties of $\lambda_2$ and $\lambda_1$}

Let  $M_2, M_1>0$ and $h \geq 1$ to be chosen later on. We define 
\begin{equation}\label{equation_definition_of_lambda_2}
\lambda_2(x, \xi) = M_2 w\left(\frac{\xi}{h}\right) \int_{0}^{x} \langle y \rangle ^{-\sigma} dy, \quad (x, \xi) \in \R^2,
\end{equation}
\begin{equation}\label{equation_definition_of_lambda_1}
\lambda_1(x, \xi) = M_1 w\left(\frac{\xi}{h}\right) \langle \xi \rangle^{-1}_{h} \int_{0}^{x} \langle y \rangle ^{-\frac{\sigma}{2}} \psi\left(\frac{\langle y \rangle^{\sigma}}{\langle \xi \rangle^{2}_{h}}\right) dy, \quad (x, \xi) \in \R^2,
\end{equation}
where 
$$
w (\xi) =
\begin{cases}
0, \qquad \qquad \qquad \,\,\, |\xi| \leq 1\\
-\text{sgn}(\partial_{\xi}a_3(t, \xi)) , \quad |\xi| > R_{a_3}
\end{cases},
\quad 
\psi (y) =
\begin{cases}
1, \quad |y| \leq \frac{1}{2} \\
0, \quad |y| \geq 1
\end{cases},
$$	
$|\partial^{\alpha} w(\xi)| \leq C_{w}^{\alpha + 1} \alpha!^{\mu}$, $|\partial^{\beta} \psi(y)| \leq C_{\psi}^{\beta + 1}\beta!^{\mu}$ for some $\mu > 1$ which will be chosen later. Notice that by the assumption (i) of Theorem \ref{theorem_main_theorem} the function $w(\xi)$ is constant for $\xi \geq R_{a_3}$ and for $\xi \leq -R_{a_3}.$

\begin{lemma}\label{lemma_estimates_lambda_2}
	Let $\lambda_2(x, \xi)$ as in (\ref{equation_definition_of_lambda_2}). Then there exists $C>0$ such that for all $\alpha, \beta \in \N$ and $(x,\xi) \in \R^2$:
	\begin{itemize}
		\item[(i)] $|\lambda_2(x, \xi)| \leq \frac{M_2}{1-\sigma} \langle x \rangle^{1-\sigma};$
		\item[(ii)] $|\partial^{\beta}_{x}\lambda_2(x, \xi)| \leq M_2 C^{\beta} \beta! \langle x \rangle^{1-\sigma-\beta}$, for $\beta \geq 1$;
		\item[(iii)] $| \partial^{\alpha}_{\xi} \partial^{\beta}_{x}\lambda_2(x, \xi)| \leq M_2 C^{\alpha+\beta +1} \alpha!^{\mu} \beta! \chi_{E_{h, R_{a_3}}} (\xi) 
		\langle \xi \rangle^{-\alpha}_{h} \langle x \rangle^{1-\sigma - \beta}$, for $\alpha \geq 1, \beta \geq 0$,
	\end{itemize}
	where $E_{h, R_{a_3}} = \{ \xi \in \R \colon h \leq \xi \leq R_{a_3}h\}$. In particular $\lambda_2 \in \SG^{0, 1-\sigma}_{\mu}(\R^2)$.
\end{lemma}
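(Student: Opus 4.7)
The plan is to verify the three estimates in turn, all of which follow from straightforward calculus, and then to combine them to obtain the membership $\lambda_2\in\SG^{0,1-\sigma}_\mu(\R^2)$.

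First I would prove (i). Since $w$ is a Gevrey regular function interpolating between $0$ and $\pm 1$, we may take it to satisfy $|w(\eta)|\leq 1$ for every $\eta\in\R$. Hence $|\lambda_2(x,\xi)|\leq M_2\bigl|\int_0^x\langle y\rangle^{-\sigma}dy\bigr|$, so it suffices to show that
$$
F(x):=\int_0^x\langle y\rangle^{-\sigma}dy\qquad\text{satisfies}\qquad |F(x)|\leq \frac{\langle x\rangle^{1-\sigma}}{1-\sigma}.
$$
Noting that $F$ is odd, it is enough to argue for $x\geq 0$. Set $G(x)=\frac{\langle x\rangle^{1-\sigma}}{1-\sigma}-F(x)$. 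Then $G(0)=\frac{1}{1-\sigma}>0$ and
$G'(x)=\langle x\rangle^{-\sigma-1}x-\langle x\rangle^{-\sigma}=\langle x\rangle^{-\sigma}\bigl(\tfrac{x}{\langle x\rangle}-1\bigr)\leq 0$. Since $G(x)\to 0$ as $x\to\infty$ (both terms are asymptotic to $x^{1-\sigma}/(1-\sigma)$), monotonicity yields $G(x)\geq 0$, which is (i).

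Next, for (ii), for $\beta\geq 1$ I would write $\partial_x^\beta\lambda_2(x,\xi)=M_2\,w(\xi/h)\,\partial_x^{\beta-1}\langle x\rangle^{-\sigma}$ and invoke the standard Gevrey-analytic estimate
$$
|\partial_x^\gamma\langle x\rangle^{-\sigma}|\leq C^\gamma\,\gamma!\,\langle x\rangle^{-\sigma-\gamma},\qquad \gamma\in\N_0,
$$
which can be proved, e.g., by a direct induction on $\gamma$ or by Faà di Bruno's formula. Using $|w|\leq 1$ this gives (ii) with a suitable $C>0$, absorbing the factor $(\beta-1)!$ into $C^\beta\beta!$.

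The key point for (iii) is to localize the $\xi$-support of $\partial_\xi^\alpha\lambda_2$ when $\alpha\geq 1$. By construction $w(\eta)=0$ for $|\eta|\leq 1$ and $w(\eta)$ is constant ($\pm 1$) for $|\eta|>R_{a_3}$; recall that by hypothesis (i) of Theorem \ref{theorem_main_theorem} the sign of $\partial_\xi a_3(t,\xi)$ is constant in $t$ on each of $\{\xi>R_{a_3}\}$ and $\{\xi<-R_{a_3}\}$ so $w$ is indeed a function of $\xi$ alone. Therefore $w^{(\alpha)}(\xi/h)$ is supported in $\{h\leq|\xi|\leq R_{a_3}h\}$ for $\alpha\geq 1$. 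On this set $\langle\xi\rangle_h=\sqrt{h^2+\xi^2}\sim h$, so $h^{-\alpha}\leq C^\alpha\langle\xi\rangle_h^{-\alpha}$. Combining the Gevrey bound $|w^{(\alpha)}|\leq C_w^{\alpha+1}\alpha!^\mu$ with the estimate
$$
\bigl|\partial_x^\beta F(x)\bigr|\leq C^{\beta+1}\beta!\,\langle x\rangle^{1-\sigma-\beta},\qquad \beta\in\N_0,
$$
(valid for $\beta=0$ by (i) and for $\beta\geq 1$ by the bound used in (ii)) yields (iii).

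Finally, to deduce $\lambda_2\in\SG^{0,1-\sigma}_\mu(\R^2)$ I would merge the three estimates: for $\alpha=0$ use (i)-(ii); for $\alpha\geq 1$ use (iii) together with the elementary inequality $\langle\xi\rangle_h^{-\alpha}\leq\langle\xi\rangle^{-\alpha}$ (valid since $h\geq 1$) to drop the characteristic function and replace $\langle\xi\rangle_h$ by $\langle\xi\rangle$. I do not foresee a real obstacle here: the only nontrivial observation is the localization of $\supp w^{(\alpha)}$ to the dyadic shell around $h$, which is what makes the $\xi$-decay of order $-\alpha$ (rather than $0$) appear.
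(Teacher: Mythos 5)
Your arguments for parts (ii) and (iii) follow the same route as the paper's proof: differentiate once in $x$ to bring out $\langle x\rangle^{-\sigma}$, invoke the analytic bound $|\partial_x^{\gamma}\langle x\rangle^{-\sigma}|\le C^{\gamma+1}\gamma!\,\langle x\rangle^{-\sigma-\gamma}$, and for (iii) localize the support of $w^{(\alpha)}(\xi/h)$, $\alpha\ge 1$, to $\{h\le|\xi|\le R_{a_3}h\}$ and use that $\langle\xi\rangle_h$ is comparable to $h$ there.

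Part (i), however, has a genuine gap. You set $G(x)=\tfrac{1}{1-\sigma}\langle x\rangle^{1-\sigma}-F(x)$, correctly verify $G'\le 0$ on $[0,\infty)$, and then assert $G(x)\to 0$ as $x\to\infty$, justifying this only by the observation that the two terms are each asymptotic to $x^{1-\sigma}/(1-\sigma)$. That inference is a non sequitur ($x+1\sim x$, yet the difference is $1$), and here it is in fact false: since $G'(x)=-\langle x\rangle^{-\sigma-1}(x+\langle x\rangle)^{-1}$, one has
$$
\lim_{x\to\infty}G(x)=\frac{1}{1-\sigma}-\int_0^\infty\frac{dy}{\langle y\rangle^{\sigma+1}(y+\langle y\rangle)},
$$
and the substitution $y=\sinh u$ turns the integral into $\int_0^\infty(\cosh u)^{-\sigma}e^{-u}\,du$, which is strictly less than $1$ when $\sigma>0$, so the limit is strictly positive. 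The conclusion $G\ge 0$ is nonetheless true (a non-increasing function with a nonnegative limit is nonnegative), but as written the claim ``$G(\infty)=0$'' is wrong and the step is unsupported. The cleanest repair, and the paper's actual argument, bypasses monotonicity entirely: $\langle y\rangle\ge|y|$ gives $\langle y\rangle^{-\sigma}\le y^{-\sigma}$ for $y>0$, hence
$$
|F(x)|\le\int_0^{|x|}y^{-\sigma}\,dy\le\int_0^{\langle x\rangle}y^{-\sigma}\,dy=\frac{\langle x\rangle^{1-\sigma}}{1-\sigma}.
$$
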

\begin{proof}
	First note that
$$	|\lambda_2(x, \xi)| =  M_2 \left|w\left( \frac{\xi}{h}\right)\right| \int_{0}^{|x|} \langle y \rangle ^{-\sigma} dy 
	\leq M_2  \int_{0}^{\langle x \rangle} y^{-\sigma} dy =  \frac{M_2}{1-\sigma} \langle x \rangle^{1-\sigma}.$$
	For $\beta \geq 1$
$$	|\partial^{\beta}_{x} \lambda_2(x, \xi)| \leq M_2 \left|w\left( \frac{\xi}{h}\right)\right| |\partial^{\beta - 1}_{x} \langle x \rangle^{-\sigma}|  \leq M_2 C^{\beta-1} (\beta - 1)! \langle x \rangle^{1-\sigma - \beta}. $$
	For $\alpha \geq 1$
	\begin{align*}
	|\partial^{\alpha}_{\xi} \lambda_2(x, \xi)| &\leq M_2 h^{-\alpha} \left|w^{(\alpha)} \left(\frac{\xi}{h}\right)\right| \int^{\langle x \rangle}_{0} y^{-\sigma} dy \\
	&\leq \frac{M_2}{1-\sigma} C^{\alpha + 1}_{w} \langle R_{a_3} \rangle^{\alpha} \alpha!^{\mu} \chi_{E_{h, R_{a_3}}} (\xi) \langle \xi \rangle_{h}^{-\alpha} \langle x \rangle^{1-\sigma}. 
	\end{align*}
	Finally, for $\alpha, \beta \geq 1$
	\begin{align*}
	|\partial^{\alpha}_{\xi} \partial^{\beta}_{x} \lambda_2(x, \xi)| &\leq M_2 h^{-\alpha} \left|w^{(\alpha)} \left(\frac{\xi}{h}\right) \right| \partial^{\beta-1}_{x} \langle x \rangle^{-\sigma} \\
	&\leq M_2 C^{\alpha + 1}_{w} \langle R_{a_3} \rangle^{\alpha} C^{\beta - 1} \alpha!^{\mu} (\beta - 1)! \chi_{E_{h, R_{a_3}}} (\xi) \langle \xi \rangle_{h}^{-\alpha} \langle x \rangle^{1-\sigma - \beta}. 
	\end{align*}	
\end{proof}

For the function $\lambda_1$ we can prove the following alternative estimates.

\begin{lemma}\label{lemma_estimates_lambda_1}
	Let $\lambda_1(x,\xi)$ as in (\ref{equation_definition_of_lambda_1}). Then there exists $C>0$ such that for all $\alpha, \beta \geq 0$ and $(x,\xi) \in \R^2:$
	\begin{itemize}
		\item[(i)] $|\partial^{\alpha}_{\xi} \partial^{\beta}_{x} \lambda_1(x, \xi)| \leq M_1 C^{\alpha + \beta + 1} (\alpha! \beta!)^{\mu} \langle \xi \rangle^{-1 - \alpha}_{h} \langle x \rangle^{1-\frac{\sigma}{2} - \beta};$
		\item[(ii)] $|\partial^{\alpha}_{\xi} \partial^{\beta}_{x} \lambda_1(x, \xi)| \leq M_1 C^{\alpha + \beta + 1} (\alpha! \beta!)^{\mu} \langle \xi \rangle^{- \alpha}_{h} \langle x \rangle^{1-\sigma - \beta}.$
	\end{itemize}
	In particular $\lambda_1 \in \SG^{0, 1-\sigma}_{\mu}(\R^2)$.
\end{lemma}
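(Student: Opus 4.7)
The plan is to decompose $\lambda_1$ as a product $\lambda_1(x,\xi) = M_1 F(\xi)\,I(x,\xi)$, where $F(\xi) = w(\xi/h)\langle \xi \rangle_h^{-1}$ and $I(x,\xi) = \int_0^x \langle y \rangle^{-\sigma/2} \psi(\langle y \rangle^\sigma/\langle \xi \rangle_h^2)\,dy$, and then apply Leibniz to split mixed derivatives between $F$ and $I$. Bounding $\partial_\xi^\alpha F$ proceeds exactly as in Lemma~\ref{lemma_estimates_lambda_2}: since $w(\cdot/h)$ is Gevrey-$\mu$ with nontrivial derivatives supported on $E_{h,R_{a_3}}$ (where $\langle \xi\rangle_h\sim h$), and $|\partial_\xi^\alpha\langle \xi\rangle_h^{-1}|\leq C^\alpha\alpha!\,\langle\xi\rangle_h^{-1-\alpha}$, one obtains $|\partial_\xi^\alpha F(\xi)|\leq C^{\alpha+1}\alpha!^\mu\langle\xi\rangle_h^{-1-\alpha}$.

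Next I estimate $\partial_\xi^\alpha\partial_x^\beta I$. For $\beta=0$, only $\psi$ depends on $\xi$; by Faà di Bruno applied to the composition of $\psi\in\gamma^\mu$ with $u(\xi)=\langle\xi\rangle_h^{-2}$ (whose derivatives satisfy $|\partial_\xi^k u|\leq C^k k!\,\langle\xi\rangle_h^{-2-k}$), each summand has the form $\psi^{(j)}(\langle y\rangle^\sigma u(\xi))\prod_\ell\bigl(\langle y\rangle^\sigma\partial_\xi^{\alpha_\ell}u\bigr)$, and on $\mathrm{supp}\,\psi$ we have $\langle y\rangle^\sigma\leq\langle\xi\rangle_h^2$, so each factor is bounded by $C^{\alpha_\ell}\alpha_\ell!\,\langle\xi\rangle_h^{-\alpha_\ell}$. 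Collecting terms gives
\[
\bigl|\partial_\xi^\alpha\psi\bigl(\langle y\rangle^\sigma/\langle\xi\rangle_h^2\bigr)\bigr|\leq C^{\alpha+1}\alpha!^\mu\,\langle\xi\rangle_h^{-\alpha},
\]
supported where $\langle y\rangle^\sigma\leq\langle\xi\rangle_h^2$. For $\beta\geq 1$, write $\partial_x I = \langle x\rangle^{-\sigma/2}\psi(\langle x\rangle^\sigma/\langle\xi\rangle_h^2)$ and distribute the remaining $\beta-1$ derivatives in $x$ via Leibniz, using Faà di Bruno now with inner function $v(x)=\langle x\rangle^\sigma$; each $x$-derivative then costs at worst a factor $\langle x\rangle^{-1}$.

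From here, estimate (i) follows by combining the bounds via Leibniz and integrating the pointwise bound $\langle y\rangle^{-\sigma/2}$ over $[0,|x|]$, producing the factor $\langle x\rangle^{1-\sigma/2}$ and keeping $\langle\xi\rangle_h^{-1-\alpha}$ from $F$. For estimate (ii) I exploit the support constraint more aggressively: on $\mathrm{supp}\,\psi$ one has $\langle\xi\rangle_h^{-1}\leq\langle y\rangle^{-\sigma/2}$, so one factor of $\langle\xi\rangle_h^{-1}$ coming from $F$ (or from the derivatives of $\psi$) can be absorbed into the integrand, upgrading $\langle y\rangle^{-\sigma/2}$ to $\langle y\rangle^{-\sigma}$; integration then yields $\langle x\rangle^{1-\sigma}$ at the cost of one fewer power of $\langle\xi\rangle_h^{-1}$.

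The main technical obstacle will be the Faà di Bruno bookkeeping for $\partial_\xi^\alpha\psi(\langle y\rangle^\sigma/\langle\xi\rangle_h^2)$ and its $(x,\xi)$-mixed analogue: one must verify that the Gevrey-$\mu$ constants from $\psi$ compose correctly with the partition sums and that the support-based replacement is applied uniformly in every summand (the fact that $\mu>1$ is used here, to absorb the polynomial loss from partitions). Once this is in order, membership $\lambda_1\in\SG^{0,1-\sigma}_\mu(\R^2)$ is an immediate consequence of (ii).
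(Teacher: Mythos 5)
Your proposal is correct and follows essentially the same route as the paper's proof: Leibniz splitting between $w(\xi/h)\langle\xi\rangle_h^{-1}$ and the integral, Fa\`a di Bruno for the composition with $\psi$, and the support constraint $\langle y\rangle^{\sigma}\leq\langle\xi\rangle_h^{2}$ used exactly as you describe, either keeping $\langle\xi\rangle_h^{-1}$ outside (giving (i)) or absorbing it into the integrand to upgrade $\langle y\rangle^{-\sigma/2}$ to $\langle y\rangle^{-\sigma}$ (giving (ii)). The paper likewise handles $\beta\geq 1$ by differentiating once under the integral and estimating $\langle x\rangle^{-\sigma/2}\psi(\langle x\rangle^{\sigma}/\langle\xi\rangle_h^{2})$ with the same support argument, so no gap remains beyond the bookkeeping you already flag.
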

\begin{proof}
	Denote by $\chi_{\xi}(x)$ the characteristic function of the set $\{x \in \R \colon \langle x \rangle^{\sigma} \leq \langle \xi \rangle^{2}_{h}\}$. For $\alpha = \beta = 0$ we have
	\begin{align*}
	|\lambda_1(x, \xi)| &\leq M_1 \left| w\left(\frac{\xi}{h}\right) \right| \langle \xi \rangle^{-1}_{h} \int^{\langle x \rangle}_{0} y^{-\frac{\sigma}2} dy \leq  \frac{2}{2-\sigma} M_1 \langle \xi \rangle^{-1}_{h} \langle x \rangle^{1-\frac{\sigma}2},
	\end{align*}
	and
	\begin{align*}
	|\lambda_1(x, \xi)| \leq M_1 \left| w\left(\frac{\xi}{h}\right) \right| \int^{\langle x \rangle}_{0} \langle \xi \rangle^{-1}_h  \langle y \rangle^{-\frac{\sigma}2} \chi_{\xi}(y) dy 
						\leq \frac{M_1}{1-\sigma} \langle x \rangle^{1-\sigma}.
	\end{align*}
	For $\alpha \geq 1$, with the aid of Fa\`a di Bruno formula, we have
	
	\begin{align*}
	|\partial^{\alpha}_{\xi}\lambda_1(x, \xi)| &\leq M_1 \sum_{\alpha_1 + \alpha_2 + \alpha_3 = \alpha} \dfrac{\alpha!}{\alpha_1!\alpha_2!\alpha_3!} h^{-\alpha_1} \left| w^{(\alpha_1)} \left( \frac{\xi}{h} \right) \right| \partial^{\alpha_2}_{\xi}\langle\xi\rangle^{-1}_{h} \left| \int_{0}^{x} \langle y \rangle^{-\frac{\sigma}2} \partial^{\alpha_3}_{\xi} \psi\left(\frac{\langle y \rangle^{\sigma}}{\langle \xi \rangle^{2}_{h}}\right) dy \right|  \\
	&\leq  M_1 \sum_{\alpha_1 + \alpha_2 + \alpha_3 = \alpha} \dfrac{\alpha!}{\alpha_1!\alpha_2!\alpha_3!} C^{\alpha_1+1}_{w} \langle R_{a_3} \rangle^{\alpha_3} \alpha_1!^{\mu} 
	\langle \xi \rangle^{-\alpha_1}_{h} C^{\alpha_2}\alpha_2! \langle \xi \rangle^{-1 - \alpha_2}_{h} \\
	&\times \int_{0}^{\langle x\rangle} \langle y \rangle^{-\frac{\sigma}2} \chi_{\xi}(y) \sum_{j = 1}^{\alpha_3} \frac{\left| \psi^{(j)} \left(\frac{\langle y \rangle^{\sigma}}{\langle \xi \rangle^{2}_{h}}\right) \right|}{j!} \sum_{\gamma_1 + \dots + \gamma_j = \alpha_3} \frac{\alpha_3!}{\gamma_1!\dots\gamma_j!} \prod_{\ell = 1}^{j} \partial^{\gamma_\ell}_{\xi} \langle \xi \rangle^{-2}_{h} \langle y \rangle^{\sigma} dy \\
	&\leq  M_1 \sum_{\alpha_1 + \alpha_2 + \alpha_3 = \alpha} \dfrac{\alpha!}{\alpha_1!\alpha_2!\alpha_3!} C^{\alpha_1+1}_{w} \langle R_{a_3} \rangle^{\alpha_3} \alpha_1!^{\mu} 
	\langle \xi \rangle^{-\alpha_1}_{h} C^{\alpha_2}\alpha_2! \langle \xi \rangle^{-1 - \alpha_2}_{h} \\
	&\times \int_{0}^{\langle x\rangle} \langle y \rangle^{-\frac{\sigma}2} \chi_{\xi}(y) \sum_{j = 1}^{\alpha_3} C^{j+1}_{\psi} j!^{\mu - 1} \sum_{\gamma_1 + \dots + \gamma_j = \alpha_3} \frac{\alpha_3!}{\gamma_1!\dots\gamma_j!} \prod_{\ell = 1}^{j} C^{\gamma_\ell + 1} \gamma_{\ell}! \langle \xi \rangle^{- \gamma_\ell}_{h} dy \\
	&\leq M_1 C_{\{w, \psi, \sigma, R_{a_3}\}}^{\alpha+1} \alpha!^{\mu} \langle \xi \rangle^{-1 - \alpha}_{h} \langle y \rangle^{1-\frac{\sigma}{2}},
	\end{align*}
	\noindent and
	\begin{align*}
	|\partial^{\alpha}_{\xi}\lambda_1(x, \xi)| &\leq M_1 \sum_{\alpha_1 + \alpha_2 + \alpha_3 = \alpha} \dfrac{\alpha!}{\alpha_1!\alpha_2!\alpha_3!} h^{-\alpha_1} \left| w^{(\alpha_1)} \left( \frac{\xi}{h} \right) \right| \partial^{\alpha_2}_{\xi}\langle\xi\rangle^{-1}_{h} \left| \int_{0}^{x} \langle y \rangle^{-\frac{\sigma}2} \partial^{\alpha_3}_{\xi} \psi\left(\frac{\langle y \rangle^{\sigma}}{\langle \xi \rangle^{2}_{h}}\right) dy \right|  \\
	&\leq  M_1 \sum_{\alpha_1 + \alpha_2 + \alpha_3 = \alpha} \dfrac{\alpha!}{\alpha_1!\alpha_2!\alpha_3!} C^{\alpha_1+1}_{w} \langle R_{a_3} \rangle^{\alpha_3} \alpha_1!^{\mu} 
	\langle \xi \rangle^{-\alpha_1}_{h} C^{\alpha_2}\alpha_2! \langle \xi \rangle^{- \alpha_2}_{h} \\
	&\times \int_{0}^{\langle x\rangle} \langle \xi \rangle^{-1}_{h} \langle y \rangle^{-\frac{\sigma}2} \chi_{\xi}(y) \sum_{j = 1}^{\alpha_3} \frac{\left| \psi^{(j)} \left(\frac{\langle y \rangle^{\sigma}}{\langle \xi \rangle^{2}_{h}}\right) \right|}{j!} \sum_{\gamma_1 + \dots + \gamma_j = \alpha_3} \frac{\alpha_3!}{\gamma_1!\dots\gamma_j!} \prod_{\ell = 1}^{j} \partial^{\gamma_\ell}_{\xi} \langle \xi \rangle^{-2}_{h} \langle y \rangle^{\sigma} dy \\
	&\leq  M_1 \sum_{\alpha_1 + \alpha_2 + \alpha_3 = \alpha} \dfrac{\alpha!}{\alpha_1!\alpha_2!\alpha_3!} C^{\alpha_1+1}_{w} \langle R_{a_3} \rangle^{\alpha_3} \alpha_1!^{\mu} 
	\langle \xi \rangle^{-\alpha_1}_{h} C^{\alpha_2}\alpha_2! \langle \xi \rangle^{- \alpha_2}_{h} \\
	&\times \int_{0}^{\langle x\rangle} \langle y \rangle^{-\sigma} \chi_{\xi}(y) \sum_{j = 1}^{\alpha_3} C^{j+1}_{\psi} j!^{\mu - 1} \sum_{\gamma_1 + \dots + \gamma_j = \alpha_3} \frac{\alpha_3!}{\gamma_1!\dots\gamma_j!} \prod_{\ell = 1}^{j} C^{\gamma_\ell + 1} \gamma_{\ell}! \langle \xi \rangle^{- \gamma_\ell}_{h} dy \\
	&\leq M_1 C_{\{w, \psi, \sigma, R_{a_3}\}}^{\alpha+1} \alpha!^{\mu} \langle \xi \rangle^{- \alpha}_{h} \langle y \rangle^{1-\sigma}.
	\end{align*}
	
	\noindent For $\beta \geq 1$ we have 
	
	\begin{align*}
	|\partial^{\beta}_{x} \lambda_1(x, \xi)| &\leq M_1 \langle \xi \rangle^{-1}_h \chi_{\xi} (x)\sum_{\beta_1 + \beta_2 = \beta - 1} \frac{(\beta-1)!}{\beta_1!\beta_2!} \partial^{\beta_1}_{x} \langle x \rangle^{-\frac{\sigma}2} \sum_{j=1}^{\beta_2} \frac{\left| \psi^{(j)} \left(\frac{\langle y \rangle^{\sigma}}{\langle \xi \rangle^{2}_{h}}\right) \right|}{j!}\\
	&\times  \sum_{\delta_1 + \dots + \delta_j = \beta_2}   \frac{\beta_2!}{\delta_1! \dots \delta_j!} \prod_{\ell = 1}^{j} \langle \xi \rangle^{-2}_{h}\partial^{\delta_\ell}_{x} \langle x \rangle^{\sigma} \\
	&\leq M_1 \langle \xi \rangle^{-1}_h \chi_{\xi} (x)\sum_{\beta_1 + \beta_2 = \beta - 1} \frac{(\beta-1)!}{\beta_1!\beta_2!} C^{\beta_1 + 1} 
	\beta_1!^{\mu} \langle x \rangle^{-\frac{\sigma}2 - \beta_1} \sum_{j=1}^{\beta_2} C_{\psi}^{j+1}j!^{\mu - 1}  \\
	&\times \sum_{\delta_1 + \dots +\delta_j = \beta_2} \frac{\beta_2!}{\delta_1! \dots \delta_j!} \prod_{\ell = 1}^{j} C^{\delta_\ell + 1} \delta_{\ell}! \langle x \rangle^{-\delta_\ell} \\  
	&\leq M_1 C_{\psi}^{\alpha + \beta + 1} (\beta - 1)!^{\mu} \langle \xi \rangle^{-1}_h \chi_{\xi} (x) \langle x \rangle^{1-{\sigma} - \beta} \\
	&\leq M_1 C_{\psi}^{\alpha + \beta + 1} (\beta - 1)!^{\mu} \langle x \rangle^{1-\sigma - \beta}.
	\end{align*} 
	
	\noindent Finally, for $\alpha, \beta \geq 1$ we have 
	
	\begin{align*}
	|\partial^{\alpha}_{\xi} &\partial^{\beta}_{x} \lambda_{1}(x, \xi)| \leq M_1 \sum_{\alpha_1 + \alpha_2 + \alpha_3 = \alpha} \dfrac{\alpha!}{\alpha_1!\alpha_2!\alpha_3!} h^{-\alpha_1} \left| w^{(\alpha_1)} \left( \frac{\xi}{h} \right) \right| \partial^{\alpha_2}_{\xi}\langle\xi\rangle^{-1}_{h} \sum_{\beta_1 + \beta_2 = \beta - 1} \frac{(\beta-1)!}{\beta_1!\beta_2!}  \\
	&\times \partial^{\beta_1}_{x} \langle x\rangle^{\frac{\sigma}2} \left| \partial^{\alpha_3}_{\xi} \partial^{\beta_2}_{x} \psi\left(\frac{\langle x \rangle^{\sigma}}{\langle \xi \rangle^{2}_{h}}\right) \right| \\
	&\leq M_1 \chi_{\xi}(x)\sum_{\alpha_1 + \alpha_2 + \alpha_3 = \alpha} \dfrac{\alpha!}{\alpha_1!\alpha_2!\alpha_3!} h^{-\alpha_1} \left| w^{(\alpha_1)} \left( \frac{\xi}{h} \right) \right| \partial^{\alpha_2}_{\xi}\langle\xi\rangle^{-1}_{h} \sum_{\beta_1 + \beta_2 = \beta - 1} \frac{(\beta-1)!}{\beta_1!\beta_2!} \partial^{\beta_1}_{x} \langle x\rangle^{-\frac{\sigma}2} \\
	&\times \sum_{j=1}^{\alpha_3 + \beta_2} \frac{\left| \psi^{(j)} \left(\frac{\langle x \rangle^{\sigma}}{\langle \xi \rangle^{2}_{h}}\right) \right|}{j!} 
	\sum_{\gamma_1 + \dots + \gamma_j = \alpha_3} \sum_{\delta_1 + \dots +\delta_j = \beta_2} \frac{\alpha_3!}{\gamma_1! \dots \gamma_j!} \frac{\beta_2!}{\delta_1! \dots \delta_j!}
	\prod_{\ell = 1}^{j} \partial^{\gamma_{\ell}}_{\xi} \langle \xi \rangle^{-2}_{h} \partial^{\delta{\ell}}_{x} \langle x \rangle^{\sigma} \\
	&\leq M_1 \chi_{\xi}(x)\sum_{\alpha_1 + \alpha_2 + \alpha_3 = \alpha} \dfrac{\alpha!}{\alpha_1!\alpha_2!\alpha_3!} C^{\alpha_1+1}_{w} \alpha_1!^{\mu} \langle R_{a_3} \rangle^{\alpha_1} \langle \xi \rangle^{-\alpha_1}_{h} C^{\alpha_2 + 1} \alpha_2! \langle\xi\rangle^{-1 - \alpha_2}_{h}  \\
	&\times \sum_{\beta_1 + \beta_2 = \beta - 1} \frac{(\beta-1)!}{\beta_1!\beta_2!} C^{\beta_1+1} \beta_1! \langle x\rangle^{-\frac12(1-\frac1s)-\beta_1} \sum_{j=1}^{\alpha_3 + \beta_2} C^{j+1}_{\psi} j!^{\mu-1}  \\
	&\times \sum_{\gamma_1 + \dots + \gamma_j = \alpha_3} \sum_{\delta_1 + \dots + \delta_j = \beta_2}\frac{\alpha_3!}{\gamma_1! \dots \gamma_j!} 
	\frac{\beta_2!}{\delta_1! \dots \delta_j!} \prod_{\ell = 1}^{j} C^{\gamma_\ell + 1} \gamma_\ell! \langle \xi \rangle^{-2 - \gamma_\ell}_{h} C^{\delta_\ell + 1} \delta_{\ell}! \langle x \rangle^{\sigma - \delta_{\ell}} \\
	&\leq M_1 \chi_{\xi}(x) C^{\alpha + \beta + 1}_{\{w, \sigma, \psi, R_{a_3}\}} \alpha!^{\mu} (\beta-1)!^{\mu} \langle \xi \rangle^{-1 - \alpha}_{h} 
	\langle x \rangle^{1-\frac{\sigma}{2} - \beta}  \\
	&\leq M_1 C^{\alpha + \beta + 1}_{\{w,\sigma, \psi, R_{a_3}\}} \alpha!^{\mu} (\beta-1)!^{\mu} \langle \xi \rangle^{- \alpha}_{h} 
	\langle x \rangle^{ 1-\sigma - \beta}.
	\end{align*}
\end{proof}


\subsection{Invertibility of $e^{\tilde{\Lambda}}$, $\tilde{\Lambda} = \lambda_2 + \lambda_1$}\label{section_invertibility_of_e_power_tilde_Lambda}
 
In this section we construct an inverse for the operator $e^{\tilde\Lambda}(x,D)$ with $\tilde{\Lambda}(x, \xi) = \lambda_2(x,\xi) + \lambda_1(x, \xi)$ and we prove that the inverse acts continuously on Gelfand Shilov-Sobolev spaces. By Lemmas \ref{lemma_estimates_lambda_2} and \ref{lemma_estimates_lambda_1} we have 
$\tilde{\Lambda} \in \SG^{0, 1-\sigma}_{\mu}(\R^{2})$. Therefore, by Proposition \ref{proposition_exponential_of_symbols_of_finite_order}, $e^{\tilde{\Lambda}} \in \SG^{0, \infty}_{\mu; 1/(1-\sigma)}(\R^{2})$. To construct the inverse of $e^{\tilde{\Lambda}}(x,D)$ we need to use the  $L^2$ adjoint of $e^{-\tilde{\Lambda}}(x,D)$, denoted in the sequel by $^R \hskip-1pt e^{ -\tilde\Lambda}$ and defined as an oscillatory integral by
$$
^R \hskip-1pt e^{ -\tilde\Lambda}u(x)= \iint e^{i(x-y)\xi} e^{ -\tilde\Lambda(y,\xi)} u(y)\, dy \dslash \xi.
$$
Assuming $\mu > 1$ such that $1/(1-\sigma) > 2\mu -1$, by results from calculus, we may write
$$^{R}e^{-\tilde{\Lambda}} = a_1(x,D) + r_1(x,D),$$
where $a_1 \sim \sum_{\alpha} \frac{1}{\alpha!} \partial^{\alpha}_{\xi}D^{\alpha}_{x}e^{-\tilde{\Lambda}}$ in $FSG^{0, \infty}_{\mu; 1/(1-\sigma)}(\R^{2})$, $r_1 \in \mathcal{S}_{2\mu - 1}(\R^{2})$, and
$$
e^{\tilde{\Lambda}} \circ ^{R}e^{-\tilde{\Lambda}} = e^{\tilde{\Lambda}}\circ a_1(x,D) + e^{\tilde{\Lambda}}\circ r_1(x,D) = a_2(x,D) + r_2(x,D) + e^{\tilde{\Lambda}}\circ r_1(x,D),
$$
where 
$$
a_2 \sim \sum_{\alpha, \beta} \frac{1}{\alpha! \beta!} \partial^{\alpha}_{\xi}e^{\tilde{\Lambda}}\partial^{\beta}_{\xi}D^{\alpha+\beta}_{x}e^{-\tilde{\Lambda}} = \sum_{\gamma} \frac{1}{\gamma!} \partial^{\gamma}_{\xi}(e^{\tilde{\Lambda}}D^{\gamma}_{x}e^{-\tilde{\Lambda}}) \,\,\text{in}\,\, FSG^{0, \infty}_{\mu; 1/(1-\sigma)}(\R^2) 
$$
and $r_2 \in \mathcal{S}_{2\mu -1}(\R^{2})$. Therefore 
$$
e^{\tilde{\Lambda}} \circ ^{R}e^{-\tilde{\Lambda}} = a(x,D) + r(x,D),
$$
where $a \sim \sum_{\gamma} \frac{1}{\gamma!} \partial^{\gamma}_{\xi}(e^{\tilde{\Lambda}}D^{\gamma}_{x}e^{-\tilde{\Lambda}})$ in $FSG^{0, \infty}_{\mu; 1/(1-\sigma)}(\R^{2})$ and $r \in \mathcal{S}_{2\mu - 1}(\R^{2})$.

Now let us study more carefully the asymptotic expansion 
$$
\sum_{\gamma \geq 0} \frac{1}{\gamma!} \partial^{\gamma}_{\xi}(e^{\tilde{\Lambda}}D^{\gamma}_{x}e^{-\tilde{\Lambda}}) 
= \sum_{\gamma \geq 0} r_{1, \gamma}.
$$
Note that
\begin{align*}
	e^{\tilde{\Lambda}(x,\xi)} D^{\gamma}_{x}e^{-\tilde{\Lambda}(x,\xi)} &= \sum_{j = 1}^{\gamma} \frac{(-1)^{\gamma}}{j!} \sum_{\gamma_1 + \dots + \gamma_j = \gamma} 
	\frac{{\gamma!}}{\gamma_1! \dots \gamma_j!} \prod_{\ell = 1}^{j} D^{\gamma_\ell}_{x} \tilde{\Lambda}(x, \xi),
\end{align*}

\noindent hence, for $\alpha, \beta \geq 0$,

\begin{align*}
	|\partial^{\alpha}_{\xi} \partial^{\beta}_{x} r_{1, \gamma}| &\leq \frac{1}{\gamma!} \sum_{j = 1}^{\gamma} \frac{1}{j!} \sum_{\gamma_1 + \dots + \gamma_j = \gamma} \frac{{\gamma!}}{\gamma_1! \dots \gamma_j!} 
	\sum_{\alpha_1 + \dots + \alpha_j = \alpha + \gamma} \sum_{\beta_1 + \dots + \beta_j = \beta} 
	\frac{(\alpha+\gamma)!}{\alpha_1! \dots \alpha_j!} \frac{\beta!}{\beta_1!\dots \beta_j!} \\
	&\times\prod_{\ell = 1}^{j} |\partial^{\alpha_\ell}_{\xi}\partial^{\beta_\ell + \gamma_\ell}_{x} \tilde{\Lambda}(x, \xi)| \\
	&\leq \frac{1}{\gamma!} \sum_{j = 1}^{\gamma} \frac{1}{j!} \sum_{\gamma_1 + \dots + \gamma_j = \gamma} \frac{{\gamma!}}{\gamma_1! \dots \gamma_j!} 
	\sum_{\alpha_1 + \dots + \alpha_j = \alpha + \gamma} \sum_{\beta_1 + \dots + \beta_j = \beta} 
	\frac{(\alpha+\gamma)!}{\alpha_1! \dots \alpha_j!} \frac{\beta!}{\beta_1!\dots \beta_j!} \\
	&\times \prod_{\ell = 1}^{j}C_{\tilde{\Lambda}}^{\alpha_\ell + \beta_\ell + \gamma_\ell + 1} \alpha_{\ell}!^{\mu}(\beta_\ell + \gamma_{\ell})!^{\mu} 
	\langle \xi \rangle_{h}^{-\alpha_\ell}\langle x \rangle^{1-\sigma - \beta_\ell - \gamma_\ell} \\
	&\leq C^{\alpha + \beta + 2\gamma + 1} \alpha!^{\mu} \beta!^{\mu} \gamma!^{2\mu - 1} \langle \xi \rangle_{h}^{-\alpha - \gamma} \sum_{j = 1}^{\gamma} \frac{\langle x \rangle^{(1-\sigma)j - \beta - \gamma}}{j!}.
\end{align*}

In the following we shall consider the sets 
$$
Q_{t_1, t_2; h} = \{(x, \xi) \in \R^{2} : \langle x \rangle < t_1 \,\, \text{and} \,\, \langle \xi \rangle_{h} < t_2 \}
$$
and $Q_{t_1, t_2; h}^{e} = \R^{2} \setminus Q_{t_1, t_2; h}$. When $t_1 = t_2 = t$ we simply write $Q_{t; h}$ and $Q^{e}_{t; h}$.

Let $\psi(x, \xi) \in C^{\infty}(\R^{2})$ such that $\psi \equiv 0$ on $Q_{2; h}$, $\psi \equiv 1$ on $Q^{e}_{3; h}$, $0 \leq \psi \leq 1$ and 
$$
|\partial^{\alpha}_{\xi} \partial^{\beta}_{x} \psi (x, \xi)| \leq C^{\alpha + \beta + 1}_{\psi}\alpha!^{\mu} \beta!^{\mu},
$$
for every $x, \xi \in \R$ and $\alpha, \beta \in \N_0$. Now set $\psi_0 \equiv 1$ and, for $j \geq 1$, 
$$
\psi_j (x, \xi) := \psi \left( \dfrac{x}{R(j)}, \dfrac{\xi}{R(j)} \right),
$$
where $R(j) := R j^{2\mu-1}$ and $R > 0$ is a large constant. Let us recall that 
\begin{itemize}
	\item $(x, \xi) \in Q^{e}_{3R(j)} \implies \left(  \dfrac{x}{R(j)}, \dfrac{\xi}{R(j)}  \right) \in Q^{e}_{3} \implies \psi_i(x, \xi) = 1$, for $i \leq j$;  
	\item $(x, \xi) \in Q_{R(j)} \implies \left(  \dfrac{x}{R(j)}, \dfrac{\xi}{R(j)}  \right) \in Q_{2} \implies 
	\psi_i(x, \xi) = 0$, for $i \geq j$.  
\end{itemize} 

Defining $b(x,\xi) = \sum_{j \geq 0} \psi_{j}(x,\xi) r_{1, j}(x,\xi)$
we have that $b  \in \SG^{0,\infty}_{\mu;\frac{1}{1-\sigma}}(\R^{2})$ and
$$
b(x,\xi) \sim \sum_{j \geq 0} r_{1, j}(x,\xi) \,\, \text{in} \,\, FSG^{0, \infty}_{\mu;\frac{1}{1-\sigma}}(\R^{2}). 
$$
We will show that $b \in SG^{0,0}_{\mu}(\R^{2n})$. Indeed, first we write 
$$
b(x,\xi) = 1 + \sum_{j \geq 1} \psi_{j}(x,\xi) r_{1,j}(x,\xi) = 1 + \sum_{j \geq 0} \psi_{j+1}(x,\xi) r_{1, j+1}(x,\xi).
$$
On the support of  $\partial^{\alpha_1}_{\xi}\partial^{\beta_1}_{x} \psi_{j+1}$ we have
$$
\langle x \rangle \leq 3R(j+1) \quad \text{and} \quad \langle \xi \rangle_{h} \leq 3 R(j+1),
$$
whenever $\alpha_1 + \beta_1 \geq 1$. Hence
\begin{align*}
	\lvert \partial^{\alpha}_{\xi} \partial^{\beta}_{x} & \sum_{j \geq 0} \psi_{j+1} r_{1, j+1} (x,\xi) \rvert  \leq 
	\sum_{\overset{\alpha_1+\alpha_2 = \alpha}{\beta_1+\beta_2 = \beta}} \frac{\alpha!}{\alpha_1!\alpha_2!} \frac{\beta!}{\beta_1!\beta_2!}
	|\partial^{\alpha_1}_{\xi}\partial^{\beta_1}_{x} \psi_{j+1}(x,\xi)| |\partial^{\alpha_2}_{\xi} \partial^{\beta_2}_{x} r_{1, j+1}(x,\xi)| \\
	&\leq \sum_{j \geq 0} \sum_{\overset{\alpha_1+\alpha_2 = \alpha}{\beta_1+\beta_2 = \beta}} \frac{\alpha!}{\alpha_1!\alpha_2!} \frac{\beta!}{\beta_1!\beta_2!} \frac{1}{R(j+1)^{(\alpha_1 + \beta_1)}} C_{\psi}^{\alpha_1+\beta_1 +1} \alpha_1!^{\mu}\beta_1!^{\mu} \\
	&\times
	C^{\alpha_2+\beta_2 + 2(j+1) +1 } \alpha_2!^{\mu} \beta_2!^{\mu} (j+1)!^{2\mu-1} \langle \xi \rangle^{-\alpha_2-(j+1)}_{h} \langle x \rangle^{-\beta_1-(j+1)} \sum_{\ell = 1}^{j+1} \frac{\langle x \rangle^{(1-\sigma)\ell}}{\ell!} \\
	&\leq \sum_{j \geq 0} \sum_{\overset{\alpha_1+\alpha_2 = \alpha}{\beta_1+\beta_2 = \beta}} \frac{\alpha!}{\alpha_1!\alpha_2!} \frac{\beta!}{\beta_1!\beta_2!} \frac{1}{R(j+1)^{(\alpha_1+\beta_1)}} C_{\psi}^{\alpha_1+\beta_1 +1} \alpha_1!^{\mu}\beta_1!^{\mu} \\
	&\times
	C^{\alpha_2+\beta_2 + 2(j+1) +1 } \alpha_2!^{\mu} \beta_2!^{\mu} (j+1)!^{2\mu-1} \langle \xi \rangle^{-\alpha_2-(j+1)}_{h} \langle x \rangle^{-\sigma - \beta_1} \sum_{\ell = 1}^{j+1} \frac{\langle x \rangle^{(1-\sigma)(\ell-1) - j}}{\ell!} \\
	&\leq \tilde{C}^{\alpha+\beta+1}(\alpha!\beta!)^{\mu} \langle \xi \rangle^{-1 - \alpha}_{h} \langle x \rangle^{-\sigma - \beta} 
	\sum_{j \geq 0} C^{2j} (j+1)!^{2\mu-1} \langle \xi \rangle^{-j}_{h} \sum_{\ell = 0}^{j+1} \frac{\langle x \rangle^{(1-\sigma)(\ell-1) - j}}{\ell!}.
\end{align*}
We also have that 
$$
\langle x \rangle \geq R(j+1) \quad \text{or} \quad \langle \xi \rangle_{h} \geq R(j+1)
$$
holds true on the support of $\partial^{\alpha_1}_{\xi}\partial^{\beta_1}_{x} \psi_{j+1}$. If $\langle \xi \rangle_{h} \geq R(j+1)$, then 
$$
\langle \xi \rangle^{-j}_{h} \leq R^{-j}(j+1)^{-j(2\mu-1)} \leq R^{-j} (j+1)!^{-(2\mu-1)}.
$$ 
On the other hand, since we are assuming $\mu > 1$ such that $2\mu-1 < \frac{1}{1-\sigma}$, if $\langle x \rangle \leq R(j+1)$ we obtain
\begin{align*}
	\langle x \rangle^{(1-\sigma)(\ell-1) - j}	&\leq R^{(1-\sigma)(\ell-1) - j}\{(j+1)^{2\mu-1} \}^{(1-\sigma)(\ell-1) - j}  \\
	&\leq R^{-\sigma j} (j+1)^{\ell - 1 - j(2\mu-1)} \\
	&=  R^{-\sigma j} (j+1)^{\ell - 1} (j+1)!^{-(2\mu-1)}.
\end{align*}
Enlarging $R >0$ if necessary, we can infer that $\sum_{j \geq 1} r_{1, j} \in \SG^{-1, -\sigma}_{\mu}(\R^{2})$. 

In analogous way it is possible to prove that $\sum_{j \geq k} r_{1, j} \in \SG^{-k, -\sigma k}_{\mu}(\R^{2})$. Hence, we may conclude 
$$
b(x,\xi) - \sum_{j < k}r_{1,j}(x,\xi) \in \SG^{-k, -\sigma k}_{\mu}(\R^{2}), \quad k \in \N,
$$
that is, $b \sim \sum_{j} r_{1,j}$ in $\SG^{0,0}_{\mu}(\R^{2})$.

Since $a \sim \sum r_{1,j}$ in $FSG^{0, \infty}_{\mu;1/(1-\sigma)}(\R^{2})$, $b \sim \sum r_{1,j}$ in $FSG^{0,\infty}_{\mu;1/(1-\sigma)}(\R^{2})$ we have $a - b \in \mathcal{S}_{2\mu-1}(\R^{2})$. Thus we may write 
$$
e^{\tilde{\Lambda}}(x,D) \circ ^{R}e^{-\tilde{\Lambda}} = I + \tilde{r}(x,D) + \bar{r}(x,D) =I+r(x,D),
$$
where $\tilde{r} \in \SG^{-1, -\sigma}_{\mu}(\R^{2})$, $\tilde{r} \sim \sum_{\gamma \geq 1} r_{1, \gamma}$ in $\SG^{-1, -\sigma}_{\mu}(\R^{2})$ and $\bar{r} \in \mathcal{S}_{2\mu -1}(\R^{2})$. In particular $r \in \SG^{-1, -\sigma}_{2\mu-1}(\R^2)$, therefore we obtain
\begin{align*}
	|\partial^{\alpha}_{\xi} \partial^{\beta}_{x} r(x,\xi)| &\leq C_{\alpha, \beta} \langle \xi \rangle^{-1 -\alpha}_{h} \langle x \rangle^{-\sigma -\beta}  \\
	&\leq C_{\alpha, \beta} h^{-1} \langle \xi \rangle^{-\alpha}_{h} \langle x \rangle^{-\sigma - \beta}.
\end{align*}
This implies that the $(0,0)-$seminorms of $r$ are bounded by $h^{-1}$. Choosing $h$ large enough, we obtain that $I + r(x,D)$ is invertible on $L^{2}(\R)$ and its inverse $(I+r(x,D))^{-1}$ is given by the Neumann series $\sum_{j \geq 0}(-r(x,D))^{j}$. 

By Theorem \ref{theorem_spectral_invariance_SG_operator_with_gevrey_estimates} we have
$$
(I+r(x,D))^{-1}= q(x,D) + k(x,D),
$$
where $q \in \SG^{0,0}_{2\mu -1}(\R^{2})$, $k \in \Sigma_{\delta}(\R^{2})$ for every $\delta > 2(2\mu-1) - 1  = 4\mu - 3 $. Choosing $\mu>1$ close enough to $1$, we have that $\delta$ can be chosen arbitrarily close to $1$. Hence, by Theorem \ref{theorem_continuity_finite_order_in_gelfand_shilov_sobolev_spaces}, for every fixed $s >1, \theta >1$, we can find $\mu >1$ such that 
$$
(I+r(x,D))^{-1} : H^{m'}_{\rho'; s, \theta}(\R) \to H^{m'}_{\rho'; s, \theta}(\R)
$$
is continuous for every $m', \rho' \in \R^2$. Analogously one can show the existence of a left inverse of $e^\Lambda$ with the same properties. Summing up, we obtain the following result.

\begin{lemma}\label{lemma_inverse_of_e_power_tilde_Lambda}
	 Let $s, \theta > 1$ and take $\mu > 1$ such that $\min\{s, \theta\} >  4\mu-3$. For $h > 0$ large enough, the operator $e^{\tilde{\Lambda}}(x,D)$ is invertible on $L^2(\R)$ and on $\Sigma_{\min\{s,\theta\}}(\R)$ and its inverse is given by 
	$$
	\{e^{\tilde{\Lambda}}(x,D)\}^{-1} = ^{R}e^{-\tilde{\Lambda}(x,D)} \circ (I+r(x,D))^{-1}=^{R}e^{-\tilde{\Lambda}(x,D)} \circ \sum_{j \geq 0} (-r(x,D))^{j}, 
	$$
	where $r \in \SG^{-1, -\sigma}_{2\mu - 1}(\R^{2})$ and $r \sim \sum_{\gamma \geq 1} \frac{1}{\gamma!} \partial^{\gamma}_{\xi}(e^{\tilde{\Lambda}} D^{\gamma}_{x} e^{-\tilde{\Lambda}})$ in $\SG^{-1, -\sigma}_{2\mu-1}(\R^{2})$. Moreover, the symbol of $(I+r(x,D))^{-1}$ belongs to $\SG^{0,0}_{\delta}(\R^2)$ for every $\delta >4\mu-3$ and it maps continuously $H^{m'}_{\rho';s,\theta}(\R)$ into itself for any $\rho', m' \in \R^2$.  
\end{lemma}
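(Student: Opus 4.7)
The plan is to build a right inverse for $e^{\tilde\Lambda}(x,D)$ by composing the reverse operator ${}^R e^{-\tilde\Lambda}$ defined in \eqref{reverseop} with a corrector obtained by a Neumann series, and then to upgrade $L^2$-invertibility to invertibility on the Gelfand--Shilov scale by appealing to the spectral invariance result of Theorem \ref{theorem_spectral_invariance_SG_operator_with_gevrey_estimates}. I fix $\mu>1$ small enough that both $1/(1-\sigma)>2\mu-1$ and $4\mu-3<\min\{s,\theta\}$, the first to run the infinite-order calculus of Theorem \ref{theorem_symbolic_calculus_of_infinte_order} on $\tilde\Lambda\in \SG^{0,1-\sigma}_\mu(\R^2)$, the second to ensure that the Schwartz-kernel remainders produced by spectral invariance lie in a $\Sigma_\delta$-class that still acts continuously on $H^{m'}_{\rho';s,\theta}(\R)$ via Theorem \ref{theorem_continuity_finite_order_in_gelfand_shilov_sobolev_spaces}.

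The first step is symbolic: Theorem \ref{theorem_symbolic_calculus_of_infinte_order} applied first to the adjoint of $e^{-\tilde\Lambda}(x,D)$ and then to the composition $e^{\tilde\Lambda}(x,D)\circ {}^R e^{-\tilde\Lambda}$ yields
\[
e^{\tilde\Lambda}(x,D)\circ {}^R e^{-\tilde\Lambda} = I + r(x,D),
\qquad r \sim \sum_{\gamma\geq 1}\frac{1}{\gamma!}\partial^\gamma_\xi\bigl(e^{\tilde\Lambda} D^\gamma_x e^{-\tilde\Lambda}\bigr),
\]
in $FSG^{0,\infty}_{\mu;1/(1-\sigma)}(\R^2)$, up to a Schwartz remainder in $\mathcal S_{2\mu-1}(\R^2)$.

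The main technical step (and the principal obstacle) is to show that this $r$ actually lies in the \emph{finite-order} class $\SG^{-1,-\sigma}_{2\mu-1}(\R^2)$, rather than only in the infinite-order class produced by the generic calculus. I realize $r$ as $\sum_{j\geq 1}\psi_j r_{1,j}$ with standard Gevrey cut-offs $\psi_j$ localized outside $Q_{R(j);h}$, $R(j)=Rj^{2\mu-1}$, and estimate each $r_{1,j}$ by the Fa\`a di Bruno formula using Lemmas \ref{lemma_estimates_lambda_2} and \ref{lemma_estimates_lambda_1}. Each term produces mixed factors of the form $\langle\xi\rangle_h^{-j}$ and $\langle x\rangle^{(1-\sigma)\ell-j}$, and on $\supp\partial^{\alpha_1}_\xi\partial^{\beta_1}_x\psi_{j+1}$ one has either $\langle\xi\rangle_h\geq R(j+1)$ or $\langle x\rangle\leq R(j+1)$. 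The condition $2\mu-1<1/(1-\sigma)$, i.e.\ $(1-\sigma)(2\mu-1)<1$, is precisely what allows the Stirling losses $j!^{2\mu-1}$ to be absorbed by the gains $R(j+1)^{-j}$ in both alternatives; iterating the same bookkeeping shows more generally that $\sum_{j\geq k}\psi_j r_{1,j}\in \SG^{-k,-\sigma k}_\mu$, which together with the $\mathcal S_{2\mu-1}$ remainder gives $r\in \SG^{-1,-\sigma}_{2\mu-1}(\R^2)$.

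With this order in hand the $\SG^{0,0}$-seminorms of $r$ pick up a factor $h^{-1}$ from the $\langle\xi\rangle_h^{-1}$-gain, so choosing $h$ large makes $\|r(x,D)\|_{L^2\to L^2}$ as small as one wishes. Then $I+r(x,D)$ is invertible on $L^2(\R)$ by the Neumann series $\sum_{j\geq 0}(-r(x,D))^j$, and Theorem \ref{theorem_spectral_invariance_SG_operator_with_gevrey_estimates} identifies its inverse with $q(x,D)+k(x,D)$, $q\in\SG^{0,0}_{2\mu-1}(\R^2)$, $k\in\Sigma_\delta(\R^2)$ for every $\delta>4\mu-3$; by the choice of $\mu$, Theorem \ref{theorem_continuity_finite_order_in_gelfand_shilov_sobolev_spaces} gives continuity on every $H^{m'}_{\rho';s,\theta}(\R)$. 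Defining $\{e^{\tilde\Lambda}(x,D)\}^{-1}:={}^R e^{-\tilde\Lambda}(x,D)\circ (I+r(x,D))^{-1}$ provides a right inverse; repeating the construction on the other side produces a left inverse of the same form, and uniqueness of the $L^2$-inverse forces the two to coincide and yields invertibility on $\Sigma_{\min\{s,\theta\}}(\R)$ by continuity of both factors there.
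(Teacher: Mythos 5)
Your proposal is correct and follows essentially the same route as the paper: composition of $e^{\tilde\Lambda}(x,D)$ with the reverse operator via the infinite-order calculus, upgrading the remainder to the finite-order class $\SG^{-1,-\sigma}_{2\mu-1}$ through the cut-off construction and the condition $2\mu-1<1/(1-\sigma)$, smallness in $h$ of the $(0,0)$-seminorms to run the Neumann series, and Theorem \ref{theorem_spectral_invariance_SG_operator_with_gevrey_estimates} plus Theorem \ref{theorem_continuity_finite_order_in_gelfand_shilov_sobolev_spaces} for the mapping properties of $(I+r(x,D))^{-1}$. The two-sided invertibility is handled as in the paper, by repeating the construction to get a left inverse.
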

We conclude this section writing $\{e^{\tilde{\Lambda}}(x,D)\}^{-1}$ in a more precise way. From the asymptotic expansion of the symbol $r(x,\xi)$ we may write
$$
\{e^{\tilde{\Lambda}}(x,D)\}^{-1} = ^{R}e^{-\tilde{\Lambda}} \circ (I - r(x,D) + (r(x,D))^2 + q_{-3}(x,D)),
$$
where $q_{-3}$ denotes an operator with symbol in $\SG^{-3, -3\sigma}_{\delta}(\R^{2})$ for every $\delta >4\mu-3$. Now note that
$$
r = i\partial_{\xi} \partial_{x}\tilde{\Lambda} + \frac{1}{2}\partial^{2}_{\xi}(\partial^{2}_{x}\tilde{\Lambda} - [\partial_x \tilde{\Lambda}]^{2}) + q_{-3} 
= q_{-1} + q_{-2} + q_{-3}.
$$
Hence
\begin{eqnarray*}
(r(x,D))^2 &=& (q_{-1} + q_{-2} + q_{-3})(x,D) \circ (q_{-1} + q_{-2} + q_{-3})(x,D) \\ &=& q_{-1}(x,D) \circ q_{-1}(x,D) + q_{-3}(x,D) \\ &=&\textrm{op}\left\{ -[\partial_{\xi} \partial_{x} \tilde{\Lambda} ]^{2} + q_{-3} \right\}
\end{eqnarray*}
for a new element $q_{-3}$ in the same space.
We finally obtain:
\begin{equation}\label{equation_inverse_of_e_power_tilde_Lambda_in_a_precise_way}
\{e^{\tilde{\Lambda}}(x,D)\}^{-1} = ^{R}e^{-\tilde{\Lambda}} \circ \left[ I +\textrm{op}\left(- i\partial_{\xi} \partial_{x} \tilde{\Lambda} - \frac{1}{2}\partial^{2}_{\xi}(\partial^{2}_{x}\tilde{\Lambda} - [\partial_x \tilde{\Lambda}]^{2}) - [\partial_{\xi} \partial_{x} \tilde{\Lambda} ]^{2} + q_{-3} \right) \right],
\end{equation}
where $q_{-3} \in \SG^{-3, -3\sigma}_{\delta}(\R^{2})$.  Since we deal with operators whose order does not exceed 3, in the next sections we are going to use frequently formula \eqref{equation_inverse_of_e_power_tilde_Lambda_in_a_precise_way} for the inverse of $e^{\tilde{\Lambda}}(x,D)$. 

\section{Conjugation of $iP$}\label{section_conjugation_of_iP}

In this section we will perform the conjugation of $iP$ by the operator $e^{ \rho_1 \langle D \rangle^{\frac{1}{\theta}} } \circ e^{\Lambda(t, x, D) }$ and its inverse,
where $\Lambda(t,x,\xi)=k(t)\langle x\rangle^{1-\sigma}_h+\tilde\Lambda(x,\xi)$ and $k\in C^1([0,T];\R)$ is a non increasing function such that $k(T)\geq 0$. Since the arguments in the following involve also derivatives with respect to $t$ these derivatives will be denoted by $D_t$, whereas the symbol $D$ in the notation for pseudodifferential operators will always correspond to derivatives with respect to $x$.

More precisely, we will compute 
$$
 e^{\rho_1 \langle D \rangle^{\frac{1}{\theta}}}
\circ e^{k(t)\langle x \rangle^{1-\sigma}_{h}} \circ e^{\tilde{\Lambda}}(x,D) \circ (iP)(t,  x, D_t, D_x) \circ
\{e^{\tilde{\Lambda}}(x,D)\}^{-1} \circ e^{-k(t)\langle x \rangle^{1-\sigma}_{h}} \circ 
e^{-\rho_1 \langle D \rangle^{\frac{1}{\theta}}} ,
$$
where $\rho_1 \in \R$ and $P(t,  x, D_t, D_x)$ is given by \eqref{equation_main_class_of_3_evolution_of_pseudo_diff_operators}. As we discussed before, the role of this conjugation is to make positive the lower order terms of the conjugated operator. 

Since the operator $^{R}e^{-\tilde{\Lambda}}$ appears in the inverse $\{e^{\tilde{\Lambda}}(x,D)\}^{-1}$, we need the following technical lemma.

\begin{lemma}\label{lemma_conjugation_by_the_reverse_operator}
Let $\tilde{\Lambda} \in \SG^{0, 1-\sigma}_{\mu}(\R^2)$ and $a \in \SG^{m_1, m_2}_{1, s_0}(\R^2)$, with $\mu > 1$ such that $1/(1-\sigma) > \mu + s_0 - 1$ and $s_0 > \mu$. Then, for $M \in \N$,
$$
e^{\tilde{\Lambda}}(x,D) \circ a(x,D) \circ ^{R}e^{-\tilde{\Lambda}} = a(x,D) + \textrm{op}\left(\sum_{1 \leq \alpha + \beta < M} \frac{1}{\alpha! \beta!} \partial^{\alpha}_{\xi} \{ \partial^{\beta}_{\xi} e^{\tilde{\Lambda}} D^{\beta}_{x} a D^{\alpha}_{x} e^{-\tilde{\Lambda}}\} + q_{M} \right) + r(x,D),
$$
where $q_M \in \SG^{m_1 - M, m_2 - M\sigma}_{\mu, s_0}(\R^2)$ and $r \in \mathcal{S}_{\mu+s_0 -1}(\R^{2})$.
\end{lemma}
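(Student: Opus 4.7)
The plan is to express ${}^{R}e^{-\tilde{\Lambda}}$ as a standard left-symbol pseudodifferential operator modulo a remainder in $\mathcal{S}_{\mu+s_0-1}(\R^2)$, and then to apply the composition formula of Theorem \ref{theorem_symbolic_calculus_of_infinte_order} in the specific order $(e^{\tilde{\Lambda}}\circ a)\circ {}^{R}e^{-\tilde{\Lambda}}$ so that the resulting double series naturally takes the shape stated in the lemma. The hypothesis $1/(1-\sigma)>\mu+s_0-1$ is exactly what places $e^{\pm\tilde{\Lambda}}\in\SG^{0,\infty}_{\mu;1/(1-\sigma)}(\R^2)$ and $a\in\SG^{m_1,m_2}_{1,s_0}(\R^2)$ into a common infinite-order class $\SG^{m_1,\infty}_{\mu,s_0;1/(1-\sigma)}(\R^2)$ in which the calculus of Theorem \ref{theorem_symbolic_calculus_of_infinte_order} applies.

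First I would compose $e^{\tilde{\Lambda}}(x,D)\circ a(x,D)$ via that theorem, obtaining $\textrm{op}(c')+r_1(x,D)$ with $r_1\in\mathcal{S}_{\mu+s_0-1}(\R^2)$ and $c'\sim\sum_\beta\frac{1}{\beta!}\partial^\beta_\xi e^{\tilde{\Lambda}}\,D^\beta_x a$. Next I would multiply on the right by ${}^{R}e^{-\tilde{\Lambda}}$: since the latter has kernel-variable symbol $e^{-\tilde{\Lambda}}(y,\xi)$, this is an operator with double symbol $c'(x,\xi)\,e^{-\tilde{\Lambda}}(y,\xi)$, and the corresponding reduction to a left symbol (the reverse-operator counterpart of the adjoint part of Theorem \ref{theorem_symbolic_calculus_of_infinte_order}, obtained by a Kumano-go stationary-phase argument) produces $\textrm{op}(d)+r_2(x,D)$ with $r_2\in\mathcal{S}_{\mu+s_0-1}(\R^2)$ and $d\sim\sum_\alpha\frac{1}{\alpha!}\partial^\alpha_\xi\{c'\cdot D^\alpha_x e^{-\tilde{\Lambda}}\}$. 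Substituting the expansion of $c'$ yields $d\sim\sum_{\alpha,\beta}\frac{1}{\alpha!\beta!}\partial^\alpha_\xi\{\partial^\beta_\xi e^{\tilde{\Lambda}}\,D^\beta_x a\,D^\alpha_x e^{-\tilde{\Lambda}}\}$, which is exactly the formal series of the lemma. The $(\alpha,\beta)=(0,0)$ term is $e^{\tilde{\Lambda}}\,a\,e^{-\tilde{\Lambda}}=a$ by pointwise cancellation, which accounts for the isolated $a(x,D)$; the terms with $1\leq\alpha+\beta<M$ are kept explicitly, those with $\alpha+\beta\geq M$ form the tail $q_M$, and the Schwartz errors (from $r_1$, $r_2$, and Proposition \ref{Proposition_existence_of_a_stymbol_which_has_an_given_asymptotica_expansion}) are collected into $r(x,D)$.

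To identify the symbol class of $q_M$ the decisive point is the pointwise cancellation of the exponentials. By Fa\`a di Bruno one writes $\partial^\beta_\xi e^{\tilde{\Lambda}}=e^{\tilde{\Lambda}}\,Q^+_\beta(\tilde{\Lambda})$ and $D^\alpha_x e^{-\tilde{\Lambda}}=e^{-\tilde{\Lambda}}\,Q^-_\alpha(\tilde{\Lambda})$, with $Q^\pm$ polynomials in derivatives of $\tilde{\Lambda}$, so that the generic summand collapses to $Q^+_\beta\cdot D^\beta_x a\cdot Q^-_\alpha$. Using the estimates of Lemmas \ref{lemma_estimates_lambda_2} and \ref{lemma_estimates_lambda_1}, $Q^+_\beta\in\SG^{-\beta,\beta(1-\sigma)}_\mu(\R^2)$ and $Q^-_\alpha\in\SG^{0,-\alpha\sigma}_\mu(\R^2)$, the worst $x$-order $j(1-\sigma)-\alpha$ in $Q^-_\alpha$ being reached when Fa\`a di Bruno produces $j=\alpha$ first-order factors. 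Multiplying by $D^\beta_x a\in\SG^{m_1,m_2-\beta}_{1,s_0}(\R^2)$ gives a symbol in $\SG^{m_1-\beta,\,m_2-(\alpha+\beta)\sigma}_{\mu,s_0}(\R^2)$; applying the outer $\partial^\alpha_\xi$ lowers the $\xi$-order by a further $\alpha$, so that every $(\alpha,\beta)$ with $\alpha+\beta\geq M$ contributes to $\SG^{m_1-M,\,m_2-M\sigma}_{\mu,s_0}(\R^2)$. A Borel-type summation of the truncated tail in the spirit of Proposition \ref{Proposition_existence_of_a_stymbol_which_has_an_given_asymptotica_expansion} then places $q_M$ in the same class.

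The main obstacle I anticipate is precisely this order count. The naive Leibniz distribution of $\partial^\alpha_\xi$ in $\partial^\alpha_\xi\{\partial^\beta_\xi e^{\tilde{\Lambda}}\,D^\beta_x a\,D^\alpha_x e^{-\tilde{\Lambda}}\}$ followed by term-by-term estimates leaves the exponentials $e^{\pm\tilde{\Lambda}}$ uncancelled and produces spurious $\langle x\rangle^{1-\sigma}$ growth which is incompatible with the stated symbol class. The care required is therefore always to pair $e^{\tilde{\Lambda}}$ with $e^{-\tilde{\Lambda}}$ before estimating, and to keep book of the Gevrey constants $\alpha!^\mu\beta!^{s_0}$ generated both by the Fa\`a di Bruno expansions of $Q^\pm$ and by the successive Leibniz steps, so that the remainder series indeed converges in $\SG^{m_1-M,\,m_2-M\sigma}_{\mu,s_0}(\R^2)$ with the advertised index pair $(\mu,s_0)$.
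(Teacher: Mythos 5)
Your proposal is correct and follows essentially the same route as the paper: work in the common infinite-order class $\SG^{m_1,\infty}_{\mu,s_0;1/(1-\sigma)}(\R^2)$, exploit the pointwise cancellation $e^{\tilde{\Lambda}}e^{-\tilde{\Lambda}}=1$ via Fa\`a di Bruno to get the order drop $\langle \xi\rangle^{-j}\langle x\rangle^{-j\sigma}$ (up to the $\langle x\rangle^{k(1-\sigma)}/k!$ factors), and resum the tail with cutoffs using $1/(1-\sigma)>\mu+s_0-1$ to land in $\SG^{m_1-M,m_2-M\sigma}_{\mu,s_0}(\R^2)$ modulo $\mathcal{S}_{\mu+s_0-1}(\R^2)$. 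The only (harmless) difference is mechanical: you reduce the amplitude $c'(x,\xi)e^{-\tilde{\Lambda}}(y,\xi)$ directly to a left symbol, whereas the paper first writes $^{R}e^{-\tilde{\Lambda}}$ as a left-quantized operator and then composes, resumming the resulting triple series into the same double sum.
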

\begin{proof}
Since $e^{\pm\tilde{\Lambda}} \in \SG^{0, \infty}_{\mu; \frac{1}{1-\sigma}} (\R^{2})$ and $a \in \SG^{m_1, m_2}_{1, s_0} (\R^{2})$, we have $e^{\pm \tilde{\Lambda}}, a \in \SG^{0, \infty}_{\mu, s_0; \frac{1}{1-\sigma}} (\R^{2})$. Therefore, by results from calculus, we obtain
$$
^{R}e^{-\tilde{\Lambda}} = a_1(x,D) + r_1(x,D) \quad {\rm and}\quad e^{\tilde{\Lambda}} (x,D) \circ a(x,D) = a_2(x,D) + r_2(x,D),
$$
where $a_1 \in \SG^{0, \infty}_{\mu, s_0; \frac{1}{1-\sigma}}(\R^{2})$, $a_2 \in \SG^{m_1, \infty}_{\mu, s_0; \frac{1}{1-\sigma}}(\R^{2}),$ $r_1, r_2 \in \mathcal{S}_{\mu+s_0-1}(\R^{2})$ and 
$$
a_1 \sim \sum_{\alpha} \frac{1}{\alpha!} \partial^{\alpha}_{\xi} D^{\alpha}_{x} e^{-\tilde{\Lambda}} \,\, \text{in} \,\,
FSG^{0, \infty}_{\mu, s_0; \frac{1}{1-\sigma}}(\R^{2}),
$$
$$
a_2 \sim \sum_{\beta} \frac{1}{\beta!} \partial^{\beta}_{\xi}e^{\tilde{\Lambda}} D^{\beta}_{x} a \,\, \text{in} \,\,
FSG^{m_1, \infty}_{\mu, s_0; \frac{1}{1-\sigma}}(\R^{2}).
$$
Hence 
\begin{eqnarray}\nonumber
	e^{\tilde{\Lambda}} \circ a(x,D) \circ ^{R}e^{-\tilde{\Lambda}} &=& a_2(x,D) \circ a_1(x,D) + a_2(x,D) \circ r_1(x,D)
	\\ \nonumber
	&+& r_2(x,D) \circ a_1(x,D) + r_2(x,D) \circ r_1(x,D)
	\\ \nonumber
	&=&a_3(x,D) + r_3(x,D)+ a_2(x,D) \circ r_1(x,D) 
	\\\nonumber
	&+& r_2(x,D) \circ a_1(x,D) + r_2(x,D) \circ r_1(x,D),
\end{eqnarray} 
with $a_2(x,D) \circ a_1(x,D) = a_3(x,D) + r_3(x,D)$, where $a_3 \in \SG^{m_1, \infty}_{\mu, s_0; \frac{1}{1-\sigma}}(\R^{2})$, $r_3 \in \mathcal{S}_{\mu+s_0-1}(\R^{2})$ and 
\begin{align*}
	a_3 &\sim \sum_{\gamma, \alpha, \beta} \frac{1}{\alpha!\beta!\gamma!} \, \partial^{\gamma}_{\xi} \{\partial^{\beta}_{\xi} e^{\tilde{\Lambda}} D^{\beta}_{x}a\} \partial^{\alpha}_{\xi}D^{\alpha + \gamma}_{x}e^{-\tilde{\Lambda}} = \sum_{\alpha, \beta} \frac{1}{\alpha! \beta!} \, \partial^{\alpha}_{\xi} \{ \partial^{\beta}_{\xi} e^{\tilde{\Lambda}} D^{\beta}_{x} a D^{\alpha}_{x} e^{-\tilde{\Lambda}}\}
	\\
	&=\ds\sum_{j\geq 0}\sum_{\alpha+\beta=j}\frac{1}{\alpha! \beta!} \, \partial^{\alpha}_{\xi} \{ \partial^{\beta}_{\xi} e^{\tilde{\Lambda}} D^{\beta}_{x} a D^{\alpha}_{x} e^{-\tilde{\Lambda}}\}=:\sum_{j\geq 0} c_j \,\, \text{in} \,\, FSG^{m_1, \infty}_{\mu, s_0; \frac{1}{1-\sigma}}.
\end{align*}
Thus
$$
e^{\tilde{\Lambda}}(x,D) \circ a(x,D) \circ ^{R}e^{-\tilde{\Lambda}} = a_3(x,D) + r(x,D), 
$$
for some $r \in \mathcal{S}_{\mu+s_0-1}(\R^{2})$.

Now let us study the asymptotic expansion of $a_3$. For $\alpha, \beta \in \N_0$ we have (omitting the dependence $(x,\xi)$):
\begin{align*}
	\partial^{\beta}_{\xi} e^{\tilde{\Lambda}} \partial^{\beta}_{x} a \partial^{\alpha}_{x} e^{-\tilde{\Lambda}} &= \partial^{\beta}_{x} a \sum_{h = 1}^{\beta} \frac{1}{h!} 
	\sum_{\beta_1 + \dots + \beta_h = \beta} \frac{\beta!}{\beta_1! \dots \beta_h!} \prod_{\ell = 1}^{h} \partial^{\beta_\ell}_{\xi} \tilde{\Lambda}  \\
	&\times \sum_{k = 1}^{\alpha} \frac{1}{k!} 
	\sum_{\alpha_1 + \dots + \alpha_k = \alpha} \frac{\alpha!}{\alpha_1! \dots \alpha_k!} \prod_{\ell = 1}^{k} \partial^{\alpha_\ell}_{x} (-\tilde{\Lambda}).
\end{align*}
Therefore, by Fa\`a di Bruno formula, for $\gamma, \delta \in \N_0$, we have
\begin{align*}
	\partial^{\gamma + \alpha}_{\xi} \partial^{\delta}_{x} &\{\partial^{\beta}_{\xi} e^{\tilde{\Lambda}} \partial^{\beta}_{x} a \partial^{\alpha}_{x} e^{-\tilde{\Lambda}}\} = 
	\sum_{\gamma_1 + \gamma_2 + \gamma_3 = \gamma + \alpha} \sum_{\delta_1 + \delta_2 + \delta_3 = \delta} \frac{(\gamma + \alpha)!}{\gamma_1! \gamma_2! \gamma_3!}
	\frac{\delta!}{\delta_1! \delta_2! \delta_3!} \,\, \partial^{\gamma_1}_{\xi} \partial^{\beta + \delta_1}_{x} a \\
	&\times \partial^{\gamma_2}_{\xi} \partial^{\delta_2}_{x} 
	\left( 
	\sum_{h = 1}^{\beta} \frac{1}{h!}\sum_{\beta_1 + \dots + \beta_h = \beta} \frac{\beta!}{\beta_1! \dots \beta_h!} \prod_{\ell = 1}^{h} 
	\partial^{\beta_\ell}_{\xi} \tilde{\Lambda}
	\right)  \\
	&\times \partial^{\gamma_3}_{\xi} \partial^{\delta_3}_{x}
	\left(
	\sum_{k = 1}^{\alpha} \frac{1}{k!} \sum_{\alpha_1 + \dots + \alpha_k = \alpha} \frac{\alpha!}{\alpha_1! \dots \alpha_k!} 
	\prod_{\ell = 1}^{k} \partial^{\alpha_\ell}_{x} (-\tilde{\Lambda})
	\right)  \\
	&= \sum_{\gamma_1 + \gamma_2 + \gamma_3 = \gamma + \alpha} \sum_{\delta_1 + \delta_2 + \delta_3 = \delta} \frac{(\gamma + \alpha)!}{\gamma_1! \gamma_2! \gamma_3!}
	\frac{\delta!}{\delta_1! \delta_2! \delta_3!} \,\, \partial^{\gamma_1}_{\xi} \partial^{\beta + \delta_1}_{x} a \\
	&\times  
	\sum_{h = 1}^{\beta} \frac{1}{h!}\sum_{\beta_1 + \dots + \beta_h = \beta} \frac{\beta!}{\beta_1! \dots \beta_h!} \sum_{\theta_1 + \dots + \theta_h = \gamma_2} \sum_{\sigma_1 + \dots + \sigma_h = \delta_2} \frac{\gamma_2!}{\theta_1! \dots \theta_h!} \frac{\delta_2!}{\sigma_1! \dots \sigma_h!} \\
	&\times \prod_{\ell = 1}^{h} \partial^{\theta_\ell + \beta_\ell}_{\xi} \partial^{\sigma_\ell}_{x} \tilde{\Lambda} \\
	&\times  
	\sum_{k = 1}^{\alpha} \frac{1}{k!}\sum_{\alpha_1 + \dots + \alpha_k = \alpha} \frac{\alpha!}{\alpha_1! \dots \alpha_k!} \sum_{\theta_1 + \dots + \theta_k = \gamma_3} \sum_{\sigma_1 + \dots + \sigma_k = \delta_3} \frac{\gamma_3!}{\theta_1! \dots \theta_k!} \frac{\delta_3!}{\sigma_1! \dots \sigma_k!} \\
	&\times \prod_{\ell = 1}^{k} \partial^{\theta_\ell}_{\xi} \partial^{\alpha_\ell + \sigma_\ell}_{x} (-\tilde{\Lambda}),
\end{align*}
hence
\begin{align*}
	|\partial^{\gamma+\alpha}_{\xi} \partial^{\delta}_{x}(\partial^{\beta}_{\xi} e^{\tilde{\Lambda}} &D^{\beta}_{x}a D^{\alpha}_{x}e^{-\tilde{\Lambda}})| \leq
	\sum_{\overset{\gamma_1 + \gamma_2 + \gamma_3 = \gamma + \alpha}{\delta_1 + \delta_2 + \delta_3 = \delta}} 
	\frac{(\gamma + \alpha)!}{\gamma_1! \gamma_2! \gamma_3!} \frac{\delta!}{\delta_1! \delta_2! \delta_3!} C^{\gamma_1 + \beta + \delta_1 + 1}_{a} \gamma_1!^{\mu}(\beta+\gamma_1)!^{s_0} \\ 
	&\times  \langle \xi \rangle^{m_1 - \gamma_1} \langle x \rangle^{m_2 - \beta - \delta_1}  \\
	&\times  
	\sum_{h= 1}^{\beta} \frac{1}{h!}\sum_{\beta_1 + \dots + \beta_h = \beta} \frac{\beta!}{\beta_1! \dots \beta_h!} \sum_{\theta_1 + \dots + \theta_h = \gamma_2} \sum_{\sigma_1 + \dots + \sigma_h = \delta_2} \frac{\gamma_2!}{\theta_1! \dots \theta_h!} \frac{\delta_2!}{\sigma_1! \dots \sigma_h!} \\
	&\times \prod_{\ell = 1}^{h} C^{\theta_\ell + \beta_\ell + \sigma_\ell + 1}_{\tilde{\Lambda}} (\theta_\ell + \beta_\ell)!^{\mu}\sigma_{\ell}!^{\mu} 
	\langle \xi \rangle^{-\theta_\ell - \beta_\ell} \langle x \rangle^{1-\sigma - \sigma_{\ell}}  \\
	&\times  
	\sum_{k = 1}^{\alpha} \frac{1}{k!}\sum_{\alpha_1 + \dots + \alpha_k = \alpha} \frac{\alpha!}{\alpha_1! \dots \alpha_k!} \sum_{\theta_1 + \dots + \theta_k = \gamma_3} \sum_{\sigma_1 + \dots + \sigma_k = \delta_3} \frac{\gamma_3!}{\theta_1! \dots \theta_k!} \frac{\delta_3!}{\sigma_1! \dots \sigma_k!} \\
	&\times \prod_{\ell = 1}^{k} C^{\theta_\ell + \beta_\ell + \sigma_\ell + 1}_{\tilde{\Lambda}} (\theta_\ell)!^{\mu}(\alpha_\ell+\sigma_{\ell})!^{\mu} 
	\langle \xi \rangle^{-\theta_\ell} \langle x \rangle^{1-\sigma - \alpha_\ell - \sigma_{\ell}}  \\
	&\leq C_1^{\gamma + \delta + 2(\alpha + \beta) + 1}\gamma!^{\mu}\delta!^{s_0}(\alpha + \beta)!^{\mu + s_0} 
	\langle \xi \rangle^{m_1 - \gamma -(\alpha + \beta)} \langle x \rangle^{m_2 - \delta -(\alpha + \beta)} \\
	&\times \sum_{k = 1}^{\alpha} \frac{ \langle x \rangle^{k(1-\sigma)}} {k!} \sum_{h =1}^{\beta} \frac{ \langle x \rangle^{h(1-\sigma)} }{h!} \\
	&\leq C_1^{\gamma + \delta + 2(\alpha + \beta) + 1}\gamma!^{\mu}\delta!^{s_0}(\alpha + \beta)!^{\mu + s_0} 
	\langle \xi \rangle^{m_1 - \gamma -(\alpha + \beta)} \langle x \rangle^{m_2 - \delta -(\alpha + \beta)} \\
	&\times 2^{\alpha+\beta} \sum_{k = 1}^{\alpha+\beta} \frac{ \langle x \rangle^{k(1-\sigma)}} {k!}.
\end{align*}
The above estimate implies
$$
|\partial^{\alpha}_{\xi} \partial^{\beta}_{x} c_j(x,\xi)| \leq  C^{\alpha + \beta + 2j + 1}\alpha!^{\mu}\beta!^{s_0}(j)!^{\mu + s_0-1} 
\langle \xi \rangle^{m_1 - \alpha - j} \langle x \rangle^{m_2 - \beta - j} 
\sum_{k = 1}^{j} \frac{ \langle x \rangle^{k(1-\sigma)} }{k!},
$$
for every $j \geq 0$, $\alpha, \beta \in \N_0$ and $x,\xi \in \R$.

Let $\psi(x, \xi) \in C^{\infty}(\R^{2})$ such that $\psi \equiv 0$ on $Q_{2}$, $\psi \equiv 1$ on $Q^{e}_{3}$, $0 \leq \psi \leq 1$ and 
$$
|\partial^{\alpha}_{\xi} \partial^{\beta}_{x} \psi (x, \xi)| \leq C^{\alpha + \beta + 1}\alpha!^{\mu} \beta!^{s_0},
$$
for every $x, \xi \in \R$ and $\alpha, \beta \in \N_0$. Now set $\psi_0 \equiv 1$ and, for $j \geq 1$, 
$$
\psi_j (x, \xi) := \psi \left( \dfrac{x}{R(j)}, \dfrac{\xi}{R(j)} \right),
$$
where $R(j) = R j^{s_0 + \mu - 1}$, for a large constant $R > 0$. 

Setting $b(x,\xi) = \sum_{j \geq 0} \psi_{j}(x,\xi)c_{j}(x,\xi)$ we have $b \in \SG^{m_1, \infty}_{\mu, s_0}(\R^{2})$ and 
$$
b(x,\xi) \sim \sum_{j \geq 0} c_j(x,\xi) \,\, \text{in} \,\, FSG^{m_1, \infty}_{\mu, s_0}(\R^{2}).
$$
By similar arguments as the ones used in Section \ref{section_invertibility_of_e_power_tilde_Lambda} we can prove that 
$$
\sum_{j \geq k} \psi_{j}(x,\xi) c_{j}(x,\xi) \in \SG^{m_1-k, m_2 - \sigma k}_{\mu,s_0}(\R^{2}), \quad k \in \N_{0}.
$$
Hence 
$$
b(x,\xi) - \sum_{j < k} c_j(x,\xi) \in \SG^{m_1-k, m_2 - \sigma k}_{\mu, s_0}, \quad k \in \N.
$$
Since $1/(1-\sigma) > \mu + s_0 -1$ we can conclude that $b-a_3 \in \mathcal{S}_{\mu+s_0-1}(\R^{2})$ and we obtain 
$$
e^{\tilde{\Lambda}} \circ a \circ ^{R}e^{-\tilde{\Lambda}}  = b(x,D) + \tilde{r}(x,D),
$$
where $\tilde{r} \in \mathcal{S}_{\mu+s_0-1}(\R^{2})$. This concludes the proof.
\end{proof}

\subsection{Conjugation by $e^{\tilde{\Lambda}}$}

We start noting that $e^{\tilde{\Lambda}} \partial_t \{e^{\tilde{\Lambda}}\}^{-1} = \partial_t$ since $\tilde{\Lambda}$ does not depend on $t$.

\begin{itemize}
\item Conjugation of $ia_3(t,D)$. 
\\
Since $a_3$ does not depend of $x$, applying Lemma \ref{lemma_conjugation_by_the_reverse_operator}, we have
$$
e^{\tilde{\Lambda}}(x,D) (ia_3)(t,D) ^{R}(e^{-\tilde{\Lambda}})= ia_3(t,D) + s(t,x,D) + r_3(t,x,D),
$$
with
$$s \sim \sum_{j \geq 1} \frac{1}{j!} \partial^{j}_{\xi} \{e^{\tilde{\Lambda}} ia_3 D^{j}_{x}e^{-\tilde{\Lambda}}\} \in FSG^{2, -\sigma}_{\mu, s_0}(\R^{2}),\quad r_3 \in C([0,T],\mathcal{S}_{\mu+s_0-1}(\R^{2})).$$ Hence, using \eqref{equation_inverse_of_e_power_tilde_Lambda_in_a_precise_way} we can write more explicitly $s(t,x,D)$ and we obtain 
\begin{align*}
e^{\tilde{\Lambda}} (ia_3)\{e^{\tilde{\Lambda}}\}^{-1} 
&= \textrm{op}\left( ia_3 - \partial_{\xi}(a_3 \partial_{x}\tilde{\Lambda}) + \frac{i}{2}\partial^{2}_{\xi}[a_3(\partial^{2}_{x}\tilde{\Lambda} - (\partial_{x}\tilde{\Lambda})^2)] + a^{(0)}_{3} + r_3\right) \\ 
&\circ \left[ I +\textrm{op}\left(- i\partial_{\xi} \partial_{x} \tilde{\Lambda} - \frac{1}{2}\partial^{2}_{\xi}(\partial^{2}_{x}\tilde{\Lambda} - [\partial_x \tilde{\Lambda}]^{2}) - [\partial_{\xi} \partial_{x} \tilde{\Lambda} ]^{2} + q_{-3} \right)\right] \\
&= ia_3 - \textrm{op}\left( \partial_{\xi}(a_3 \partial_{x}\tilde{\Lambda}) + \frac{i}{2}\partial^{2}_{\xi}\{a_3(\partial^{2}_{x}\tilde{\Lambda} - \{\partial_{x}\tilde{\Lambda}\}^2)\}
- a_3\partial_{\xi}\partial_{x}\tilde{\Lambda} + i\partial_{\xi}a_3\partial_{\xi}\partial^{2}_{x}\tilde{\Lambda} \right) 
\\
&+ \textrm{op}\left( i\partial_{\xi}(a_3 \partial_{x}\tilde{\Lambda})\partial_{\xi}\partial_{x}\tilde{\Lambda} - \frac{i}{2}a_3\{\partial^{2}_{\xi}(\partial^{2}_{x}\tilde{\Lambda} + [\partial_x \tilde{\Lambda}]^{2}) + 2[\partial_{\xi} \partial_{x} \tilde{\Lambda} ]^{2}+ r_0 + \bar{r}\right) \\
&= ia_3 - \textrm{op}\left( \partial_{\xi}a_3 \partial_{x}\tilde{\Lambda} + \frac{i}{2}\partial^{2}_{\xi}\{a_3[\partial^{2}_{x}\tilde{\Lambda} - 
(\partial_{x}\tilde{\Lambda})^2]\} + i\partial_{\xi}a_3\partial_{\xi}\partial^{2}_{x}\tilde{\Lambda}\right)
\\
&+\textrm{op}\left( i\partial_{\xi}(a_3 \partial_{x}\tilde{\Lambda})\partial_{\xi}\partial_{x}\tilde{\Lambda} - \frac{i}{2}a_3\{\partial^{2}_{\xi}(\partial^{2}_{x}\tilde{\Lambda} + [\partial_x \tilde{\Lambda}]^{2}) + 2(\partial_{\xi} \partial_{x} \tilde{\Lambda} )^{2}\}+ r_0+ \bar{r}\right)\\
&= ia_3 - \textrm{op}\left( \partial_{\xi}a_3 \partial_{x}\lambda_2 + \partial_{\xi}a_3\partial_{x}\lambda_1 + \frac{i}{2}\partial^{2}_{\xi}\{a_3(\partial^{2}_{x}\lambda_2 - \{\partial_{x}\lambda_2\}^2)\} + i\partial_{\xi}a_3\partial_{\xi}\partial^{2}_{x}\lambda_2 \right) \\
&+\textrm{op}\left( i\partial_{\xi}(a_3 \partial_{x}\lambda_2)\partial_{\xi}\partial_{x}\lambda_2 - \frac{i}{2}a_3\{\partial^{2}_{\xi}(\partial^{2}_{x}\lambda_2 + [\partial_x \lambda_2]^{2}) + 2[\partial_{\xi} \partial_{x} \lambda_2 ]^{2}\}  + r_0+ \bar{r}\right),
\end{align*}
where $a^{(0)}_{3} \in C([0,T]; \SG^{0,0}_{\mu, s_0}(\R^2))$, $r_0 \in C([0,T]; \SG^{0,0}_{\delta}(\R^{2}))$ and, since we may assume $\delta < \mu+s_0-1$, $\bar{r} \in C([0,T];\mathcal{S}_{\mu+s_0-1}(\R^2))$.

\item Conjugation of $ia_2(t,x,D)$. 
\\
By Lemma \ref{lemma_conjugation_by_the_reverse_operator} with $M=2$ and using \eqref{equation_inverse_of_e_power_tilde_Lambda_in_a_precise_way}  we get
\begin{align*}
e^{\tilde{\Lambda}}(x,D) (ia_2)(t,x,D)\{e^{\tilde{\Lambda}}(x,D)\}^{-1} &= \textrm{op}\left(ia_2 - \partial_{\xi}\{a_2 \partial_{x}\tilde{\Lambda}\} + \partial_{\xi}\tilde{\Lambda}\partial_{x}a_2 + a^{(0)}_{2} + r_2 \right) \\ 
& \circ \left[ I -\textrm{op}\left( i\partial_{\xi} \partial_{x} \tilde{\Lambda} + \frac{1}{2}\partial^{2}_{\xi}(\partial^{2}_{x}\tilde{\Lambda} - [\partial_x \tilde{\Lambda}]^{2}) + [\partial_{\xi} \partial_{x} \tilde{\Lambda} ]^{2} + q_{-3}\right)\right] \\
&= ia_2(t,x,D) + \textrm{op}( - \partial_{\xi}\{a_2 \partial_{x}\tilde{\Lambda}\} + \partial_{\xi}\tilde{\Lambda}\partial_{x}a_2 + a_2\partial_{\xi}\partial_{x} \tilde{\Lambda} + r_0+ \bar{r})\\
&= ia_2(t,x,D) + \textrm{op}(- \partial_{\xi}a_2 \partial_{x}\tilde{\Lambda} + \partial_{\xi}\tilde{\Lambda}\partial_{x}a_2 + r_0 + \bar{r} ) \\
&= ia_2(t,x,D) + \textrm{op}\left( - \partial_{\xi}a_2 \partial_{x}\lambda_2 + \partial_{\xi}\lambda_2\partial_{x}a_2 + r_0 +\bar{r} \right),
\end{align*}
where $a^{(0)}_{2} \in C([0,T]; \SG^{0,0}_{\mu, s_0})$, $r_0 \in C([0,T]; \SG^{0,0}_{\delta}(\R^{2}))$ and $\bar{r} \in C([0,T];\Sigma_{\mu+s_0-1} (\R^{2}))$.

\item Conjugation of $ia_1(t,x,D)$:
\begin{align*}
e^{\tilde{\Lambda}}(x,D) ia_1(t,x,D) \{e^{\tilde{\Lambda}}(x,D)\}^{-1} = \textrm{op}(ia_1 + a^{(0)}_{1} + r_1)\circ \sum_{j\geq 0}(-r(t, x,D))^{j} =\textrm{op}( ia_1+ \tilde{r}_{0} + \tilde{r}), 
\end{align*}
\noindent 
where $a^{(0)}_{1} \in C([0,T]; \SG^{0,1-2\sigma}_{\mu, s_0}(\R^2))$, $\tilde{r_0} \in C([0,T]; \SG^{0,1-2\sigma}_{\delta}(\R^{2}))$, $\tilde{r} \in C([0,T];\Sigma_{\mu+s_0-1} (\R^{2}))$.

\item Conjugation of $ia_0(t,x,D)$:
\begin{align*}
e^{\tilde{\Lambda}}(x,D) ia_0(t,x,D)\{e^{\tilde{\Lambda}}(x,D)\}^{-1} = \textrm{op}(ia_0 + a^{(0)}_{0} + r_0) \sum_{j\geq 0}(-r(t,x,D))^{j} = \textrm{op}( ia_0+ \tilde{\tilde{r}}_{0} + \tilde{r}_1), 
\end{align*}
\noindent
where $a^{(0)}_{0} \in C([0,T]; \SG^{0,1-2\sigma}_{\mu, s_0})$, $\tilde{\tilde{r}}_0 \in C([0,T]; \SG^{-1,1-2\sigma}_{\delta}(\R^{2}))$ and $\tilde{r}_1 \in C([0,T];\Sigma_{\mu+s_0-1} (\R^{2}))$. 
\end{itemize}

Summing up we obtain
\begin{multline}\label{conj_tilde_Lambda}
e^{\tilde{\Lambda}}(x,D) (iP(t,x,D_t,D_x)) \{e^{\tilde{\Lambda}}(x,D)\}^{-1} = \partial_{t} + ia_3(t,D) + ia_2(t,x,D) \\- \text{op}(\partial_{\xi}a_3\partial_{x}\lambda_2) 
+ ia_1(t,x,D) -\textrm{op}( \partial_{\xi}a_3\partial_{x}\lambda_1 + \partial_{\xi}a_2\partial_{x}\lambda_2 - \partial_{\xi}\lambda_2\partial_{x}a_2 +ib_1)\\ + ia_0(t,x,D) + r_{\sigma}(t,x,D) + r_{0}(t,x,D) + \bar{r}(t,x,D), 
\end{multline}
where 
\beqs\label{b1}
b_1 \in C([0,T]; \SG^{1,-2\sigma}_{\mu, s_0}(\R^{2})),\ b_1(t,x,\xi)\in\R,\ b_1\ {\rm does\ not\ depend\ on}\ \lambda_1,
\eeqs
and 
$$
r_{0} \in C([0,T]; \SG^{0,0}_{\delta}(\R^{2})), \; r_{\sigma} \in C([0,T]; \SG^{0, 1-2\sigma}_{\delta}(\R^{2})),\; \bar{r} \in C([0,T]; \Sigma_{\mu+s_0-1}(\R^{2})).
$$

\subsection{Conjugation by $e^{k(t)\langle x \rangle^{1-\sigma}_{h}}$}

Let us recall that we are assuming that $k \in C^{1}([0,T]; \R)$, $k'(t)\leq 0$ and $k(t) \geq 0$ for every $t\in [0,T]$. Following the same ideas of the proof of Lemma \ref{lemma_conjugation_by_the_reverse_operator} one can prove the following result.

\begin{lemma}\label{lem7}
Let $a \in C([0,T],\SG^{m_1, m_2}_{\mu, s_0}(\R^2))$, where $1 < \mu < s_0$ and $\mu + s_0 - 1 < \frac{1}{1-\sigma}$. Then
$$
e^{k(t)\langle x \rangle^{1-\sigma}_{h}} \, a(t,x,D) \, e^{-k(t)\langle x \rangle^{1-\sigma}_{h} } = a(t,x,D) + b(t,x,D) + \bar{r}(t,x,D),
$$ 
where $b \sim \sum_{j \geq 1} \frac{1}{j!} e^{k(t)\langle x \rangle^{1-\sigma}_{h}}\, \partial^{j}_{\xi} a\, D^{j}_{x} e^{-k(t)\langle x \rangle^{1-\sigma}_{h}}$ in  $\SG^{m_1 - 1, m_2 -\sigma}_{\mu, s_0}(\R^{2})$ and $\bar{r} \in C([0,T],\mathcal{S}_{\mu+s_0-1}(\R^{2}))$.
\end{lemma}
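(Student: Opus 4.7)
The plan is to mimic the argument of Lemma \ref{lemma_conjugation_by_the_reverse_operator}, taking advantage of the fact that $\phi(t,x):=k(t)\langle x\rangle^{1-\sigma}_h$ is independent of $\xi$, which considerably simplifies the symbolic expansion. First I would verify that $\phi(t,\cdot) \in \SG^{0,1-\sigma}_{\mu,s_0}(\R^{2})$ uniformly in $t\in [0,T]$: indeed $|\partial^{\ell}_{x}\phi(t,x)| \leq C^{\ell}\,\ell!^{s_0}\,\langle x\rangle^{1-\sigma-\ell}$ (actually with $\ell!$, hence a fortiori with $\ell!^{s_0}$). By Proposition \ref{proposition_exponential_of_symbols_of_finite_order} this gives $e^{\pm \phi(t,\cdot)} \in C([0,T];\SG^{0,\infty}_{\mu,s_0;1/(1-\sigma)}(\R^{2}))$ as soon as $1/(1-\sigma)>\mu+s_0-1$, which is our hypothesis.

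Next I would apply Theorem \ref{theorem_symbolic_calculus_of_infinte_order} twice to the composition $e^{\phi}(x,D)\circ a(t,x,D)\circ e^{-\phi}(x,D)$. Since $e^{\pm\phi}$ is $\xi$-independent, the first composition reduces (modulo $C([0,T];\mathcal{S}_{\mu+s_0-1}(\R^{2}))$) to the symbol $e^{\phi}\,a$, and the second one yields the formal expansion
\beqsn
\sum_{\alpha \geq 0}\frac{1}{\alpha!}\,\partial^{\alpha}_{\xi}(e^{\phi}\,a)\,D^{\alpha}_{x}e^{-\phi} \;=\; \sum_{\alpha \geq 0}\frac{1}{\alpha!}\,e^{\phi}\,\partial^{\alpha}_{\xi}a\;D^{\alpha}_{x}e^{-\phi},
\eeqsn
so that setting $c_{\alpha}(t,x,\xi):=\frac{1}{\alpha!}e^{\phi}\,\partial^{\alpha}_{\xi}a\;D^{\alpha}_{x}e^{-\phi}$ the term $\alpha=0$ gives back $a$ and I am left with $\sum_{j\geq 1}c_{j}$ to be realized as a symbol $b$. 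The point is that $e^{\phi}\,D^{\alpha}_{x}e^{-\phi}$ can be expanded via Fa\`a di Bruno as a finite sum of products of derivatives of $\phi$, so the exponentials cancel; using $|\partial^{\ell}_{x}\phi|\leq C^{\ell}\ell!\langle x\rangle^{1-\sigma-\ell}$ one arrives at a pointwise bound of the form
\beqsn
|\partial^{\beta}_{x}\partial^{\alpha}_{\xi}c_{j}(t,x,\xi)| \;\leq\; C^{\alpha+\beta+2j+1}\alpha!^{\mu}\beta!^{s_{0}}j!^{\mu+s_{0}-1}\,\langle\xi\rangle^{m_{1}-\alpha-j}\langle x\rangle^{m_{2}-\beta-j}\sum_{\ell=1}^{j}\frac{\langle x\rangle^{\ell(1-\sigma)}}{\ell!},
\eeqsn
uniformly in $t\in[0,T]$, which is exactly the Fa\`a di Bruno bound obtained in the proof of Lemma \ref{lemma_conjugation_by_the_reverse_operator} (with the simplification that here the Leibniz sums over $\xi$-derivatives of $\phi$ collapse).

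With this estimate in hand, the construction of $b$ proceeds as in Section \ref{section_invertibility_of_e_power_tilde_Lambda}: I pick Gevrey cut-offs $\psi_{j}(x,\xi)$ supported outside $Q_{R(j)}$ with $R(j)=Rj^{\mu+s_{0}-1}$ and define $b:=\sum_{j\geq 1}\psi_{j}c_{j}$. The localization ensures convergence, and the polynomial factor $\sum_{\ell\leq j}\langle x\rangle^{\ell(1-\sigma)}/\ell!$ is absorbed precisely because the hypothesis $(\mu+s_{0}-1)(1-\sigma)<1$ gives $\langle x\rangle^{\ell(1-\sigma)-j}\leq R^{-\sigma j}(j+1)^{\ell-1}(j+1)!^{-(\mu+s_{0}-1)}$ on the support of $\psi_{j}$ when $\langle x\rangle\leq R(j)$. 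Summing in $j$ and choosing $R$ large enough shows that $b\in C([0,T];\SG^{m_{1}-1,m_{2}-\sigma}_{\mu,s_{0}}(\R^{2}))$ with $b\sim \sum_{j\geq 1}c_{j}$ in $\SG^{m_{1}-1,m_{2}-\sigma}_{\mu,s_{0}}(\R^{2})$. The discrepancy between the formal expansion (in the infinite-order class) and $b$ (in the finite-order class) is a Schwartz remainder, which combined with the two Schwartz remainders coming from the two applications of Theorem \ref{theorem_symbolic_calculus_of_infinte_order} gives the required $\bar r\in C([0,T];\mathcal{S}_{\mu+s_{0}-1}(\R^{2}))$.

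The main technical obstacle is the second paragraph: controlling the Fa\`a di Bruno expansion of $D^{\alpha}_{x}e^{-\phi}$ together with the extra $\xi$-derivatives on $\partial^{\alpha}_{\xi}a$, and in particular tracking how the polynomial weight $\sum_{\ell\leq j}\langle x\rangle^{\ell(1-\sigma)}/\ell!$ gets absorbed in the decay $\langle x\rangle^{m_{2}-\beta-j}$ thanks to the sharp inequality $(\mu+s_{0}-1)(1-\sigma)<1$. Once this estimate is in place, the rest of the argument is a direct transcription of the proof of Lemma \ref{lemma_conjugation_by_the_reverse_operator}.
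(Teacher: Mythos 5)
Your proposal is correct and is essentially the proof the paper intends: the paper gives no separate argument for this lemma, saying only that it follows the ideas of Lemma \ref{lemma_conjugation_by_the_reverse_operator}, and your write-up spells out precisely that adaptation (with the welcome simplification that $e^{\pm k(t)\langle x \rangle^{1-\sigma}_{h}}$ are exact mutual inverses as multiplication operators, so no reverse operator or Neumann series is needed, and the Fa\`a di Bruno estimate collapses since the weight is $\xi$-independent). The only blemish, inherited from the paper's own wording in Section \ref{section_invertibility_of_e_power_tilde_Lambda}, is the phrase ``when $\langle x\rangle\leq R(j)$'': since the exponent $\ell(1-\sigma)-j$ is negative, that absorption inequality is the one used in the case $\langle x\rangle\geq R(j)$, the complementary case being handled by the factor $\langle\xi\rangle^{-j}_{h}$ with $\langle\xi\rangle_{h}\geq R(j)$ on the support of $\psi_j$.
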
 
Let us perform the conjugation by $e^{k(t)\langle x \rangle^{1-\sigma}_{h}}$ of the operator $e^{\tilde{\Lambda}} (iP) \{e^{\tilde{\Lambda}}\}^{-1}$ in \eqref{conj_tilde_Lambda} with the aid of Lemma \ref{lem7}.
\begin{itemize}
\item Conjugation of $ \partial_t$: $e^{k(t)\langle x \rangle^{1-\sigma}_{h}  } \, \partial_{t} \, e^{-k(t)\langle x \rangle^{1-\sigma}_{h}} = \partial_{t} - k'(t)\langle x \rangle^{1-\sigma}_{h}$.

\item Conjugation of $ia_3(t,D)$: 
\begin{align*}
e^{k(t)\langle x \rangle^{1-\sigma}_{h}} \, ia_3(t,D) e^{-k(t)\langle x \rangle^{1-\sigma}_{h}} &= 
ia_3 +\textrm{op}(- k(t)\partial_{\xi}a_3 \partial_{x} \langle x \rangle^{1-\sigma}_{h}) \\ 
&+ \textrm{op}\left(\frac{i}{2}\partial^{2}_{\xi}a_3 \{k(t)\partial^2_{x}\langle x \rangle^{1-\sigma}_{h}
- k^{2}(t) [\partial_{x}\langle x \rangle^{1-\sigma}_{h}]^2 \}\right)  + a^{(0)}_{3} + r_{3},
\end{align*}
where $a^{(0)}_{3} \in C([0,T]; \SG^{0, -3\sigma}_{\mu, s_0}(\R^{2}))$ and $r_{3} \in C([0,T]; \Sigma_{\mu+s_0-1}(\R^{2}))$.

\item Conjugation of $\textrm{op}(ia_2 -\partial_{\xi}a_3 \partial_{x}\lambda_2)$:
\begin{align*}
&e^{k(t)\langle x \rangle^{1-\sigma}_{h}} \textrm{op}(ia_2 -\partial_{\xi}a_3\partial_{x}\lambda_2) \,e^{-k(t)\langle x \rangle^{1-\sigma}_{h}} 
\\
&=
ia_2 +\textrm{op}(-\partial_{\xi}a_3\partial_{x}\lambda_2 )
+\textrm{op}(- k(t)\partial_{\xi}a_2 \partial_{x} \langle x \rangle^{1-\sigma}_{h} - ik(t)\partial_{\xi}\{\partial_{\xi}a_3\partial_{x}\lambda_2\} \partial_{x} \langle x \rangle^{1-\sigma}_{h} + a^{(0)}_{2}+ r_2),
\end{align*}
where $a^{(0)}_{2} \in C([0,T]; \SG^{0, -2\sigma}_{\mu, s_0}(\R^{2}))$ and $r_2 \in C([0,T]; \Sigma_{\mu+s_0-1}(\R^{2}))$.

\item Conjugation of $i(a_1+a_0)(t,x,D)$: 
$$e^{k(t)\langle x \rangle^{1-\sigma}_{h}} i(a_1+a_0)(t,x,D)e^{-k(t)\langle x \rangle^{1-\sigma}_{h}} = \textrm{op}(ia_1 + ia_0 + a_{1,0}+ r_1),$$
where  $r_1 \in C([0,T]; \Sigma_{\mu+s_0-1}(\R^{2}))$ and
$$
a_{1, 0} \sim \sum_{j \geq 1} e^{k(t)\langle x \rangle^{1-\sigma}_{h}} \frac{1}{j!} \partial^{j}_{\xi}i(a_1+a_0) D^{j}_{x} e^{-k(t)\langle x \rangle^{1-\sigma}_{h}} \,\, \text{in} \,\, \SG^{0, 1-2\sigma}_{\mu, s_0}(\R^{2}).
$$ 
It is not difficult to verify that the following estimate holds
\beqs\label{a10}
| a_{1,0} (t,x,\xi)| \leq \max\{1, k(t)\} C_{T}  \langle x \rangle^{1 - 2\sigma}_{h},
\eeqs
where $C_{T}$ depends on $a_1$ and does not depend on $k(t)$.

\item Conjugation of $\textrm{op}(- \partial_{\xi}a_3\partial_{x}\lambda_1 - \partial_{\xi}a_2\partial_{x}\lambda_2 + \partial_{\xi}\lambda_2\partial_{x}a_2 + ib_1)$: taking into account i) of Lemma \ref{lemma_estimates_lambda_1}
\begin{align*}
e^{k(t)\langle x \rangle^{1-\sigma}_{h}} &\textrm{op}(- \partial_{\xi}a_3\partial_{x}\lambda_1 - \partial_{\xi}a_2\partial_{x}\lambda_2 + \partial_{\xi}\lambda_2\partial_{x}a_2 + ib_1) e^{-k(t)\langle x \rangle^{1-\sigma}_{h}} \\
&= \textrm{op}( -\partial_{\xi}a_3\partial_{x}\lambda_1 
- \partial_{\xi}a_2\partial_{x}\lambda_2 + \partial_{\xi}\lambda_2\partial_{x}a_2
+ ib_1 + r_0 + \bar{r}),
\end{align*}
where $r_0 \in C([0,T]; \SG^{0, 0}_{\mu, s_0}(\R^{2}))$ and $\bar{r} \in C([0,T]; \Sigma_{\mu+s_0-1}(\R^{2}))$.

\item Conjugation of $r_{\sigma}(t,x,D)$:  
$$e^{k(t)\langle x \rangle^{1-\sigma}_{h}} \,r_{\sigma}(t,x,D) \, e^{-k(t)\langle x \rangle^{1-\sigma}_{h}} = r_{\sigma,1}(t,x,D) + \bar{r}(t,x,D),$$ 
where $r \in C([0,T]; \Sigma_{\mu+s_0-1}(\R^{2}))$, $r_{\sigma,1}\in C([0,T]; \SG^{0, 1-2\sigma}_{\delta}(\R^{2}))$ and the estimate 
\beqs\label{erresigma1}
|r_{\sigma,1}(t,x,\xi)| \leq C_{T,\tilde{\Lambda}} \langle x \rangle^{1-2\sigma}_{h}
\eeqs
holds with $C_{T,\tilde{\Lambda}}$ independent of $k(t)$.
\end{itemize}

Gathering all the previous computations we may write
\beqs\label{conj_k}
e^{k(t)\langle x \rangle^{1-\sigma}_{h}} &\!\!\!e^{\Lambda}&\!\!\! (iP) \{e^{\Lambda}\}^{-1}e^{-k(t)\langle x \rangle^{1-\sigma}_{h}} = \partial_{t} + \textrm{op}(ia_3 -\partial_{\xi}a_3\partial_{x}\lambda_2  + ia_2- k(t)\partial_{\xi}a_3 \partial_{x} \langle x \rangle^{1-\sigma}_{h}) \\\nonumber
&+& \textrm{op}(-\partial_{\xi}a_3\partial_{x}\lambda_1  + ia_1 - \partial_{\xi}a_2\partial_{x}\lambda_2 + \partial_{\xi}\lambda_2\partial_{x}a_2 
- k(t)\partial_{\xi}a_2 \partial_{x} \langle x \rangle^{1-\sigma}_{h} ) \\\nonumber
&+&\textrm{op}( ib_1 + ic_1 +ia_0 - k'(t)\langle x \rangle^{1-\sigma}_{h} + a_{1,0}  + r_{\sigma,1}) + r_0(t,x,D) + \bar{r}(t,x,D), 
\eeqs
where $b_1$ satisfies \eqref{b1}, 
\beqs\label{c1}
c_1 \in C([0,T]; \SG^{1, -2\sigma}_{\mu, s_0}(\R^{2})),\ c_1(t,x,\xi)\in\R,\ c_1\ {\rm does\ not\ depend\ on}\ \lambda_1,
\eeqs
(but $c_1$ depends of $\lambda_2, k(t)$), $a_{1,0}$ as in \eqref{a10}, $r_{\sigma,1}$ as in \eqref{erresigma1}, and for some new operators
\beqsn
r_0 \in C([0,T]; \SG^{0,0}_{\delta}(\R^{2})),\; \bar{r} \in C([0,T]; \mathcal{S}_{\mu + s_0 -1}(\R^{2})).
\eeqsn


\subsection{Conjugation by $e^{\rho_1 \langle D \rangle^{\frac{1}{\theta}}}$}

Since we are considering $\theta > s_0$ and $\mu > 1$ arbitrarily close to $1$, we may assume that all the previous smoothing remainder terms have symbols  in $\Sigma_{\theta}(\R^{2})$.
\begin{lemma}
	Let $a \in \SG^{m_1, m_2}_{\mu, s_0}$, where $1 < \mu < s_0$ and $\mu + s_0 - 1 < \theta$. Then
	$$
	e^{\rho_1 \langle D \rangle^{\frac{1}{\theta}}} \, a(x,D) \, e^{-\rho_1 \langle D \rangle^{\frac{1}{\theta}} } = a(x,D) + b(x,D) + r(x,D),
	$$ 
	where $b \sim \sum_{j \geq 1} \frac{1}{j!} \partial^{j}_{\xi}e^{\rho_1 \langle \xi \rangle^{\frac{1}{\theta}}}\, D^{j}_{x}a \, e^{-\rho_1 \langle \xi \rangle^{\frac{1}{\theta}}}$ in  $\SG^{m_1 - (1-\frac{1}{\theta}), m_2 -1}_{\mu, s_0}(\R^{2})$ and $r \in \mathcal{S}_{\mu+s_0 -1}(\R^{2})$.
\end{lemma}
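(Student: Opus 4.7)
The proof parallels Lemma \ref{lemma_conjugation_by_the_reverse_operator} and Lemma \ref{lem7}, but is substantially simplified by the fact that $e^{\pm \rho_1 \langle D \rangle^{1/\theta}}$ are Fourier multipliers (their symbols depend only on $\xi$). The plan consists of three steps: placing the relevant symbols in appropriate infinite-order $\SG$-classes, computing the asymptotic expansion via the symbolic calculus, and finally verifying that the tail lies in the claimed finite-order class.

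First I would observe that since $\langle \xi \rangle^{1/\theta}$ is analytic with $|\partial^\ell_\xi \langle \xi \rangle^{1/\theta}| \leq C^\ell \ell! \langle \xi \rangle^{1/\theta - \ell}$ and has no $x$-dependence, we have $\rho_1 \langle \xi \rangle^{1/\theta} \in \SG^{1/\theta, 0}_\mu(\R^2)$ for every $\mu \geq 1$. Proposition \ref{proposition_exponential_of_symbols_of_finite_order} then yields $e^{\pm \rho_1 \langle \xi \rangle^{1/\theta}} \in \SG^{\infty, 0}_{\mu; \theta}(\R^2)$, and we may of course view $a \in \SG^{m_1, m_2}_{\mu, s_0}(\R^2) \subset \SG^{\infty, m_2}_{\mu, s_0; \theta}(\R^2)$. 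The hypothesis $\mu + s_0 - 1 < \theta$ makes Theorem \ref{theorem_symbolic_calculus_of_infinte_order} applicable to all the compositions we need.

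The key simplification is that, because $e^{-\rho_1 \langle \xi \rangle^{1/\theta}}$ depends only on $\xi$, we have $D^\alpha_x e^{-\rho_1 \langle \xi \rangle^{1/\theta}} = 0$ for $\alpha \geq 1$, and the composition is already exact:
$$
a(x,D) \circ e^{-\rho_1 \langle D \rangle^{1/\theta}} = \textrm{op}\bigl( a(x,\xi)\, e^{-\rho_1 \langle \xi \rangle^{1/\theta}} \bigr).
$$
Composing on the left with $e^{\rho_1 \langle D \rangle^{1/\theta}}$ via Theorem \ref{theorem_symbolic_calculus_of_infinte_order} then produces a symbol
$$
c(x,\xi) \sim \sum_{\alpha \geq 0} \frac{1}{\alpha!} \, \partial^\alpha_\xi e^{\rho_1 \langle \xi \rangle^{1/\theta}} \cdot D^\alpha_x a(x,\xi) \cdot e^{-\rho_1 \langle \xi \rangle^{1/\theta}}
$$
in $FSG^{\infty, m_2}_{\mu, s_0; \theta}(\R^2)$, modulo a remainder in $\mathcal{S}_{\mu+s_0-1}(\R^2)$. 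The $\alpha = 0$ contribution is precisely $a(x,\xi)$, and the terms with $\alpha \geq 1$ will eventually produce $b(x,\xi)$.

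The main technical work, and the main obstacle, is to upgrade this asymptotic relation from infinite to finite order, showing $b \in \SG^{m_1 - (1-1/\theta), m_2 - 1}_{\mu, s_0}(\R^2)$. The central estimate, obtained via Fa\`a di Bruno applied to $e^{\rho_1 \langle \xi \rangle^{1/\theta}}$, is
$$
\bigl| \partial^j_\xi e^{\rho_1 \langle \xi \rangle^{1/\theta}} \cdot e^{-\rho_1 \langle \xi \rangle^{1/\theta}} \bigr| \leq C^j\, j! \, \langle \xi \rangle^{-j(1-1/\theta)},
$$
with analogous Gevrey-regular bounds for higher $\xi$-derivatives. Combined with the $\SG^{m_1, m_2}_{\mu, s_0}$-estimates on $D^j_x a$, this shows that the $j$-th term of the formal sum lies in $\SG^{m_1 - j(1-1/\theta), m_2 - j}_{\mu, s_0}(\R^2)$. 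I would then perform a Borel-type re-summation as in Proposition \ref{Proposition_existence_of_a_stymbol_which_has_an_given_asymptotica_expansion}, introducing a Gevrey cut-off sequence $\psi_j$ supported in $Q^e_{R(j); h}$ with $R(j) = R j^{\mu+s_0-1}$, and using $\mu + s_0 - 1 < \theta$ to ensure that on this region the gain $\langle \xi \rangle^{-j(1-1/\theta)}$ beats the Gevrey factor $j!^{\mu+s_0-1}$. This yields $b \in \SG^{m_1 - (1-1/\theta), m_2 - 1}_{\mu, s_0}(\R^2)$ with the stated asymptotic expansion, while the discrepancy between $c$ and $a+b$ is absorbed into the Schwartz remainder $r \in \mathcal{S}_{\mu+s_0-1}(\R^2)$.
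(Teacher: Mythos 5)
Your overall skeleton is the right one and is essentially what the paper intends (the paper states this lemma without proof, as a variant of Lemma \ref{lem7}, itself proved "by the same ideas" as Lemma \ref{lemma_conjugation_by_the_reverse_operator}): the observation that $a(x,D)\circ e^{-\rho_1\langle D\rangle^{1/\theta}}$ is \emph{exactly} $\textrm{op}(a\,e^{-\rho_1\langle\xi\rangle^{1/\theta}})$ and that no reverse-operator/inverse construction is needed is correct and a genuine simplification, the identification of the expansion $\sum_j\frac1{j!}\partial_\xi^je^{\rho_1\langle\xi\rangle^{1/\theta}}D_x^ja\,e^{-\rho_1\langle\xi\rangle^{1/\theta}}$ is right, and the Fa\`a di Bruno bound $|\partial_\xi^je^{\rho_1\langle\xi\rangle^{1/\theta}}e^{-\rho_1\langle\xi\rangle^{1/\theta}}|\leq C^{j+1}j!\,\langle\xi\rangle^{-j(1-1/\theta)}$ is correct.

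The gap is in the resummation step. With isotropic cut-offs at radii $R(j)=Rj^{\mu+s_0-1}$ you only know, on the support of $\psi_j$, that $\langle x\rangle\geq R(j)$ \emph{or} $\langle\xi\rangle\geq R(j)$, and in the branch where only $\xi$ is large (and $\langle x\rangle$ stays bounded) your mechanism fails: the $j$-th term carries a factor $j!^{s_0}$ (from $D_x^ja$) against a gain of only $\langle\xi\rangle^{-(j-1)(1-1/\theta)}$, and on $\langle\xi\rangle\approx Rj^{\mu+s_0-1}$ this product behaves like $C^jj^{\,j[(1-\mu)+(\mu+s_0-1)/\theta]}$, which diverges super-exponentially whenever $(\mu+s_0-1)(1-1/\theta)<s_0$; under the paper's standing choice of $\mu$ close to $1$ this is always the case (one would need roughly $\theta\geq(\mu+s_0-1)/(\mu-1)$, which is not implied by $\theta>\mu+s_0-1$). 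So the assertion that "$\mu+s_0-1<\theta$ ensures the gain $\langle\xi\rangle^{-j(1-1/\theta)}$ beats the Gevrey factor $j!^{\mu+s_0-1}$" on $Q^e_{R(j)}$ is false, and the sum $\sum_j\psi_jc_j$ cannot be estimated this way. The repair is to exploit the quantity that does gain a full order per step, namely $\langle x\rangle^{-j}$, and to use anisotropic excision: take $\psi_j(x,\xi)=\psi\bigl(x/R_2(j),\xi/R_1(j)\bigr)$ with $R_2(j)=Rj^{\mu+s_0-1}$ but $R_1(j)=\bigl(Rj^{\mu+s_0-1}\bigr)^{\theta/(\theta-1)}$, so that on the $\xi$-branch $\langle\xi\rangle^{1-1/\theta}\geq Rj^{\mu+s_0-1}$; this is exactly the anisotropic formal-series machinery (different gains in $x$ and $\xi$) from the appendix of \cite{ascanelli_cappiello_schrodinger_equations_Gelfand_shillov} behind Proposition \ref{Proposition_existence_of_a_stymbol_which_has_an_given_asymptotica_expansion}. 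With this choice both branches close and give $b\in\SG^{m_1-(1-1/\theta),m_2-1}_{\mu,s_0}(\R^2)$, while the comparison producing the remainder $r\in\mathcal S_{\mu+s_0-1}(\R^2)$ should be carried out in the infinite-order class $FSG^{\infty,m_2}_{\mu,s_0;\theta}(\R^2)$, where the expansion terms do gain full powers $\langle\xi\rangle^{-j}$ because the surplus $\langle\xi\rangle^{j/\theta}$ is absorbed into the weight $e^{c|\xi|^{1/\theta}}$ (this is where the hypothesis $\theta>\mu+s_0-1$ is actually used, via Theorem \ref{theorem_symbolic_calculus_of_infinte_order} and the null-expansion criterion), not in the pointwise comparison you describe.
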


Let us now conjugate by $e^{\rho_1 \langle D \rangle^{\frac{1}{\theta}}}$ the operator $e^{k(t)\langle x \rangle^{1-\sigma}_{h}} e^{\Lambda} (iP) \{e^{\Lambda}\}^{-1}e^{-k(t)\langle x \rangle^{1-\sigma}_{h}}$ in \eqref{conj_k}.
First of all we observe that since $a_3$ does not depend of $x$, we simply have 
$$e^{\rho_1 \langle D \rangle^{\frac{1}{\theta}}} \, ia_3(t,D) \, e^{-\rho_1 \langle D \rangle^{\frac{1}{\theta}}} = ia_3(t,D). $$
\begin{itemize}
	\item Conjugation of $\textrm{op}\left(-\partial_{\xi}a_3\partial_{x}\lambda_2 + ia_2 - k(t)\partial_{\xi}a_3\partial_x \langle x \rangle^{1-\sigma}_{h}\right)$: 
	\begin{align*}
		e^{\rho_1 \langle D \rangle^{\frac{1}{\theta}}} \,\textrm{op}&( -\partial_{\xi}a_3\partial_{x}\lambda_2 + ia_2 - k(t)\partial_{\xi}a_3\partial_x \langle x \rangle^{1-\sigma}_{h} )\, e^{-\rho_1 \langle D \rangle^{\frac{1}{\theta}}} \\
		&= \textrm{op}(-\partial_{\xi}a_3\partial_{x}\lambda_2 + ia_2 - k(t)\partial_{\xi}a_3\partial_x \langle x \rangle^{1-\sigma}_{h}) + (a_{2,\rho_1} + \bar{r})(t,x,D),
	\end{align*} 
	where $a_{2,\rho_1} \in C([0,T], \SG^{1+\frac{1}{\theta}, -1}_{\mu, s_0}(\R^{2}))$, $\bar{r} \in C([0,T],\Sigma_{\theta}(\R^{2}))$ and the following estimate holds
	\beqs\label{a2rho1}
	|\partial^{\alpha}_{\xi} \partial^{\beta}_{x}a_{2,\rho_1}(t,x,\xi)| \leq \max\{1,k(t)\} |\rho_1| C_{\lambda_2, T}^{\alpha+\beta+1}\alpha!^{\mu}\beta!^{s_0} \langle \xi \rangle^{1+\frac{1}{\theta} -\alpha} \langle x \rangle^{-\sigma - \beta}.
	\eeqs
	In particular
	$$
	|a_{2,\rho_1}(t,x,\xi)| \leq \max\{1,k(t)\} |\rho_1| C_{\lambda_2, T} \langle \xi \rangle^{1+\frac{1}{\theta}}_{h} \langle x \rangle^{-\sigma}.
	$$
	
	\item Conjugation of
	$
	\textrm{op}(- \partial_{\xi}a_3\partial_{x}\lambda_1 + ia_1 - \partial_{\xi}a_2\partial_{x}\lambda_2 + \partial_{\xi}\lambda_2\partial_{x}a_2 - k(t)\partial_{\xi}a_2 \partial_{x} \langle x \rangle^{1-\sigma}_{h} + ib_1 + ic_1):
	$ 
	the conjugation of this term is given by  
	\begin{align*}
		\textrm{op}(- \partial_{\xi}a_3\partial_{x}\lambda_1  + &ia_1 - \partial_{\xi}a_2\partial_{x}\lambda_2 + \partial_{\xi}\lambda_2\partial_{x}a_2 
		- k(t)\partial_{\xi}a_2 \partial_{x} \langle x \rangle^{1-\sigma}_{h} + ib_1 + ic_1) \\ 
		&+ a_{1, \rho_1}(t,x,D) + \bar{r}(t,x,D),
	\end{align*}
	where $\bar{r} \in C([0,T],\Sigma_{\theta} (\R^{2}))$ and $a_{1,\rho_1}$ satisfies the following estimate
	\beqs\label{a1rho1}
	|\partial^{\alpha}_{\xi} \partial^{\beta}_{x}a_{1,\rho_1}(t,x,\xi)| \leq \max\{1,k(t)\} |\rho_1| C_{\tilde{\Lambda}, T}^{\alpha+\beta+1}\alpha!^{\mu}\beta!^{s_0} \langle \xi \rangle^{\frac{1}{\theta} -\alpha} \langle x \rangle^{-\sigma - \beta}.
	\eeqs
	In particular
	$$
	|a_{1,\rho_1}(t,x,\xi)| \leq \max\{1,k(t)\} |\rho_1| C_{\tilde{\Lambda}, T}  \langle \xi \rangle^{\frac{1}{\theta}}_{h} \langle x \rangle^{-\sigma}.
	$$
	
	\item Conjugation of $\textrm{op}(ia_0 - k'(t)\langle x \rangle^{1-\sigma}_{h} + a_{1,0}  + r_{\sigma,1})$:
	\begin{align*}
		e^{\rho_1 \langle D \rangle^{\frac{1}{\theta}}} \textrm{op}(ia_0 - &k'(t)\langle x \rangle^{1-\sigma}_{h} + a_{1,0}  + r_{\sigma,1}) e^{-\rho_1 \langle D \rangle{\frac{1}{\theta}}} \\
		& = \textrm{op}( ia_0 - k'(t)\langle x \rangle^{1-\sigma}_{h} + a_{1,0}  + r_{\sigma,1}) + r_0(t,x,D) + \bar{r}(t,x,D), 
	\end{align*}
	where $r_0 \in C([0,T]; \SG^{(0,0)}_{\delta}(\R^{2}))$ and $\bar{r} \in C([0,T]; \Sigma_{\theta}(\R^{2}))$.
\end{itemize}

Summing up, from the previous computations we obtain 
\beqs\nonumber
e^{\rho_1 \langle D \rangle^{\frac{1}{\theta}}} &&\hskip-0.5cm e^{k(t)\langle x \rangle^{1-\sigma}_{h}} e^{\tilde{\Lambda}}(iP) \{ e^{\rho_1 \langle D \rangle^{\frac{1}{\theta}}} e^{k(t)\langle x \rangle^{1-\sigma}_{h}} e^{\tilde{\Lambda}} \}^{-1}
= \partial_{t} + ia_3(t,D)
\\\nonumber
&+& \textrm{op}\left(-\partial_{\xi}a_3\partial_{x}\lambda_2  + ia_2 - k(t)\partial_{\xi}a_3 \partial_{x} \langle x \rangle^{1-\sigma}_{h} +a_{2,\rho_1}
\right) \\ \nonumber
&+& \textrm{op}\left(- \partial_{\xi}a_3\partial_{x}\lambda_1+ ia_1 - \partial_{\xi}a_2\partial_{x}\lambda_2 + \partial_{\xi}\lambda_2\partial_{x}a_2 
- k(t)\partial_{\xi}a_2 \partial_{x} \langle x \rangle^{1-\sigma}_{h} + ib_1 + ic_1 + a_{1,\rho_1}\right) \\
&+&\textrm{op}( ia_0- k'(t)\langle x \rangle^{1-\sigma}_{h} + a_{1,0}  + r_{\sigma,1}) + r_0(t,x,D) + \bar{r}(t,x,D), \nonumber
\eeqs
with $a_{2,\rho_1}$ as in \eqref{a2rho1}, $b_1$ as in \eqref{b1}, $c_1$ as in \eqref{c1}, $a_{1,\rho_1}$ as in \eqref{a1rho1}, $a_{1,0}$ as in \eqref{a10}, $r_{\sigma,1}$ as in \eqref{erresigma1}, and for some operators
$$
r_0 \in C([0,T]; \SG^{(0,0)}_{\delta}(\R^{2})), \quad \bar{r} \in C([0,T]; \Sigma_{\theta}(\R^{2})).
$$


\section{Estimates from below}\label{section_choices_of_M1_M2_kt}

In this section we will choose $M_2, M_1$ and $k(t)$ in order to apply Fefferman-Phong and sharp G\aa rding inequalities to get the desired energy estimate for the conjugated problem. By the computations of the previous section we have 
\beqsn 
e^{\rho_1 \langle D \rangle^{\frac{1}{\theta}}} e^{k(t)\langle x \rangle^{1-\sigma}_{h}} e^{\tilde{\Lambda}}(x,D) \hskip-0.5cm && (iP) \{ e^{\rho_1 \langle D \rangle^{\frac{1}{\theta}} } e^{k(t)\langle x \rangle^{1-\sigma}_{h}} e^{\tilde{\Lambda}}(x,D) \}^{-1} 
\\ \nonumber
&=& \partial_{t} + ia_3(t,D) + \sum_{j=0}^2 \tilde{a}_j(t,x,D)+ r_0(t,x,D) + \bar{r}(t,x,D),
\eeqsn
where $\tilde{a}_2, \tilde{a}_1$ represent the part with $\xi-$order $2,1$ respectively and $\tilde{a}_0$ represents the part with $\xi-$order $0$, but with a positive order (less than or equal to $1-\sigma$) with respect to $x$. The real parts are given by 
\beqsn
Re\, \tilde{a}_2 &=& -\partial_{\xi}a_3\partial_{x}\lambda_2  - Im\, a_2 
- k(t)\partial_{\xi}a_3 \partial_{x} \langle x \rangle^{1-\sigma}_{h} + Re\, a_{2,\rho_1},
\\ 
Im\, \tilde{a}_2 &=& Re\, a_2 + Im\, a_{2,\rho_1},
\\ 
	Re\, \tilde{a}_1 &=& - \partial_{\xi}a_3\partial_{x}\lambda_1 - Im\,a_1 - \partial_{\xi}Re\,a_2\partial_{x}\lambda_2 + \partial_{\xi}\lambda_2\partial_{x}Re\,a_2 
	\\ 
	&&- k(t)\partial_{\xi}Re\,a_2 \partial_{x} \langle x \rangle^{1-\sigma}_{h} 
	+ Re\,a_{1,\rho_1},
\\ 
Re\, \tilde{a}_0 &=& -Im a_0- k'(t)\langle x \rangle^{1-\sigma}_{h} + Re\,{a}_{1,0}  + Re\,{r}_{\sigma,1}.  
\eeqsn
Since the Fefferman-Phong inequality holds true only for scalar symbols, we need to decompose $Im\, \tilde{a}_2$ into its Hermitian and anti-Hermitian part: 
$$
i Im \tilde{a}_2=\ds\frac{i Im \tilde{a}_2+(i Im \tilde{a}_2)^*}{2}+\frac{i Im \tilde{a}_2-(i Im \tilde{a}_2)^*}{2}=t_1+t_2,
$$
where $2Re\langle t_2(t,x,D)u,u\rangle =0$ and $t_1(t,x,\xi)=-\ds\sum_{\alpha\geq 1}\frac{i}{2\alpha!}\partial_\xi^\alpha D_x^\alpha Im \tilde{a}_2(t,x,\xi)$. Observe that, using  \eqref{a2rho1}, 
\begin{align}\label{aces_high} \nonumber
|t_1(t,x,\xi)| &\leq C_{a_2} \langle \xi \rangle \langle x \rangle^{-1} + \max\{1, k(t)\}|\rho_1|C_{\lambda_2} \langle \xi \rangle^{\frac{1}{\theta}} \langle x \rangle^{-\sigma - 1}  \\
&\leq \{ C_{a_2} + h^{-(1-\frac{1}{\theta})}\max\{1, k(0)\}|\rho_1|C_{\lambda_2} \} \langle \xi \rangle_{h} \langle x \rangle^{-\frac{\sigma}{2}}.
\end{align}
In this way we may write
\beqs\label{finalP}
e^{\rho_1 \langle D \rangle^{\frac{1}{\theta}}}  \hskip-0.5cm && e^{k(t)\langle x \rangle^{1-\sigma}_{h}} e^{\tilde{\Lambda}}(x,D) (iP) \{ e^{\rho_1 \langle D \rangle^{\frac{1}{\theta}} } e^{k(t)\langle x \rangle^{1-\sigma}_{h}} e^{\tilde{\Lambda}}(x,D) \}^{-1} 
\\\nonumber
&=& \partial_{t} + ia_3(t,D) + (Re\, \tilde{a}_2 +  t_2 + t_1 + \tilde{a}_{1} + \tilde{a}_0)(t,x,D) + \tilde{r}_0(t,x,D),
\eeqs
where $\tilde{r}_0$ has symbol of order $(0,0)$.

 Now we are ready to choose $M_2, M_1$ and $k(t)$. The function $k(t)$ will be of the form $k(t)=K(T-t)$, $t \in [0,T]$, for a positive constant $K$ to be chosen. In  the following computations we shall consider $|\xi| > hR_{a_3}$. Observe that $2|\xi|^{2} \geq \langle \xi \rangle^{2}_{h}$ whenever $|\xi| \geq h > 1$. For $Re\, \tilde{a}_2$ we have:
\begin{align*}
	Re\, \tilde{a}_2 &= M_2 |\partial_{\xi} a_3| \langle x \rangle^{-\sigma} - Im\, a_2 
	- k(t)\partial_{\xi}a_3 \partial_{x} \langle x \rangle^{1-\sigma}_{h} + Re\, a_{2,\rho_1} 
	\\
	&\geq M_2 C_{a_3} |\xi|^{2} \langle x \rangle^{-\sigma} - C_{a_2} \langle \xi \rangle^{2}_h \langle x \rangle^{-\sigma}
	\\ 
	&- \tilde{C}_{a_3}k(0)(1-\sigma)\langle \xi \rangle^{2}_{h} \langle x \rangle^{-\sigma}_{h} 
	- \max\{1,k(0)\} C_{\lambda_2, \rho_1} \langle \xi \rangle^{1+\frac{1}{\theta}}_{h} \langle x \rangle^{-\sigma} 
	\\
	&\geq (M_2 \frac{C_{a_3}}{2} - C_{a_2} 
	-\tilde{C}_{a_3} k(0)(1-\sigma)  - \max\{1,k(0)\}C_{\lambda_2, \rho_1} \langle \xi \rangle^{-(1-\frac{1}{\theta})}_{h} ) \langle \xi \rangle^{2}_{h} \langle x \rangle^{-\sigma} 
	\\
	&\geq (M_2 \frac{C_{a_3}}{2} - C_{a_2} 
	- \tilde{C}_{a_3}k(0)(1-\sigma)  - \max\{1,k(0)\} C_{\lambda_2, \rho_1} h^{-(1-\frac{1}{\theta})} ) \langle \xi \rangle^{2}_{h} \langle x \rangle^{-\sigma}
	\\
	&=(M_2 \frac{C_{a_3}}{2} - C_{a_2} - \tilde{C}_{a_3}KT(1-\sigma)  - \max\{1,KT\} C_{\lambda_2, \rho_1} h^{-(1-\frac{1}{\theta})} ) 
	\langle \xi \rangle^{2}_{h} \langle x \rangle^{-\sigma}
\end{align*}

For $Re\,\tilde{a}_1$, we have:
\begin{align*}
	Re\, \tilde{a}_1 &= M_1 |\partial_{\xi}a_3| \langle \xi \rangle^{-1}_{h} \langle x \rangle^{-\frac{\sigma}{2}} 
	\psi\left( \frac{\langle x \rangle^{\sigma}}{\langle \xi \rangle^{2}_{h}} \right) 
	- Im\,a_1 - \partial_{\xi}Re\,a_2\partial_{x}\lambda_2 + \partial_{\xi}\lambda_2\partial_{x}Re\,a_2 \\
	&
	- k(t)\partial_{\xi}Re\,a_2 \partial_{x} \langle x \rangle^{1-\sigma}_{h} 
	+ Re\,a_{1,\rho_1} 
	\\
	&\geq M_1 C_{a_3} |\xi|^{2}  \langle \xi \rangle^{-1}_{h} \langle x \rangle^{-\frac{\sigma}{2}} 
	\psi\left( \frac{\langle x \rangle^{\sigma}}{\langle \xi \rangle^{2}_{h}} \right) - C_{a_1}\langle \xi \rangle_{h} \langle x \rangle^{-\frac{\sigma}{2}} - \tilde{C}_{a_2, \lambda_2} \langle \xi \rangle_{h} \langle x \rangle^{-\sigma}
	\\
	&-Ck(0) (1-\sigma) \langle \xi \rangle_{h} \langle x \rangle^{-\sigma}_{h} - \max\{1,k(0)\}C_{\tilde{\Lambda}, \rho_1} \langle \xi \rangle^{\frac{1}{\theta}}_{h} 
	\langle x \rangle^{-\sigma}_{h} \\
	&\geq M_1 \frac{C_{a_3}}{2} \langle \xi \rangle_{h} \langle x \rangle^{-\frac{\sigma}{2}} 
	\psi\left( \frac{\langle x \rangle^{\sigma}}{\langle \xi \rangle^{2}_{h}} \right) - C_{a_1}\langle \xi \rangle_{h} \langle x \rangle^{-\frac{\sigma}{2}} - \tilde{C}_{a_2, \lambda_2}\langle \xi \rangle_{h} \langle x \rangle^{-\frac{\sigma}{2}}
	\\
	&-Ck(0) (1-\sigma) \langle \xi \rangle_{h} \langle x \rangle^{-\frac{\sigma}{2}}_{h} \langle x \rangle^{-\frac{\sigma}{2}} - \max\{1,k(0)\}C_{\tilde{\Lambda}, \rho_1} \langle \xi \rangle^{- (1-\frac{1}{\theta})}_{h} \langle \xi \rangle_{h} \langle x \rangle^{-\frac{\sigma}{2}} \\
	&= (M_1 \frac{C_{a_3}}{2} - C_{a_1} - \tilde{C}_{a_2, \lambda_2} 
	- C KT(1-\sigma)\langle x \rangle^{-\frac{\sigma}{2} }_{h}) \langle \xi \rangle_{h} \langle x \rangle^{-\frac{\sigma}{2}} \\  
	&-
	\max\{1,KT\}C_{\tilde{\Lambda}, \rho_1} \langle \xi \rangle^{-(1-\frac{1}{\theta})}_{h} \langle \xi \rangle_{h} \langle x \rangle^{-\frac{\sigma}{2}} - M_1 \frac{C_{a_3}}{2} \langle \xi \rangle_{h} \langle x \rangle^{-\frac{\sigma}{2}}
	\left( 1 - \psi\left( \frac{\langle x \rangle^{\sigma}}{\langle \xi \rangle^{2}_{h}} \right)  \right).
\end{align*}
Since $\langle \xi \rangle_{h} \langle x \rangle^{-\frac{\sigma}{2}} \leq \sqrt{2}$ on the support of 
$1 - \psi\left( \frac{\langle x \rangle^{\sigma}}{\langle \xi \rangle^{2}_{h}} \right)$, we may conclude
\begin{align*}
	Re\, \tilde{a}_1 &\geq (M_1 \frac{C_{a_3}}{2} - C_{a_1} - \tilde{C}_{a_2, \lambda_2}
	- CKT(1-\sigma)h^{-\frac{\sigma}{2}})\langle \xi \rangle_{h} \langle x \rangle^{-\frac{\sigma}{2}} \\ 
	&-
	\max\{1,KT\}C_{\tilde{\Lambda}, \rho_1} h^{- (1-\frac{1}{\theta})} \langle \xi \rangle_{h} \langle x \rangle^{-\frac{\sigma}{2}} 
	- M_1 \frac{C_{a_3}}{2} \sqrt{2}.
\end{align*}
Taking \eqref{aces_high} into account we obtain
\begin{align*}
	Re\, (\tilde{a}_1 + t_1) &\geq (M_1 \frac{C_{a_3}}{2} - C_{a_2} - C_{a_1} - \tilde{C}_{a_2, \lambda_2}
	- CKT(1-\sigma)h^{-\frac{\sigma}{2}})\langle \xi \rangle_{h} \langle x \rangle^{-\frac{\sigma}{2}} \\ 
	&-
	\max\{1,KT\}(C_{\tilde{\Lambda}, \rho_1} +|\rho_1|C_{\lambda_2} )h^{- (1-\frac{1}{\theta})} \langle \xi \rangle_{h} \langle x \rangle^{-\frac{\sigma}{2}} 
	- M_1 \frac{C_{a_3}}{2} \sqrt{2}.
\end{align*}

For $Re\, \tilde{a}_0$, we have:
\begin{align*}
	Re\, \tilde{a}_{0} &= - Im a_0- k'(t)\langle x \rangle^{1-\sigma}_{h} + Re\,{a}_{1,0}  + Re\,{r}_{\sigma,1} 
	\\
	&=  - Im a_0+ ( -k'(t) - \max\{1,k(0)\}C_T \langle x \rangle^{-\sigma}_{h} - C_{T, \tilde{\Lambda}} \langle x \rangle^{-\sigma}_{h}
	)\langle x \rangle^{1-\sigma}_{h} \\
	&\geq  ( -C_{a_0}+K - \max\{1,KT\}C_Th^{-\sigma} - C_{T, \tilde{\Lambda}}h^{-\sigma} 
	)\langle x \rangle^{1-\sigma}_{h}. 
\end{align*}

Finally, let us prooced with the choices of $M_1, M_2$ and $K$. First we choose $K$ larger than $\max\{C_{a_0}, 1/T\}$, then we set
$M_2$ large in order to obtain 
$M_2 \frac{C_{a_3}}{2} - C_{a_2} - \tilde{C}_{a_3}KT(1-\sigma) > 0$ 
and after that we take $M_1$ such that
$M_1 \frac{C_{a_3}}{2} - C_{a_2} - C_{a_1} - \tilde{C}_{a_2, \lambda_2} > 0$ (choosing $M_2$, $M_1$ we determine $\tilde{\Lambda}$). 
Enlarging the parameter $h$ we may assume
$$
	KTC_{\lambda_2, \rho_1} h^{-(1-\frac{1}{\theta})} < \frac{1}{4}(M_2 \frac{C_{a_3}}{2} - C_{a_2} - \tilde{C}_{a_3}KT(1-\sigma)),
$$
$$
	CKT(1-\sigma)h^{-\frac{\sigma}{2}} + KT(C_{\tilde{\Lambda}, \rho_1} + |\rho_1|C_{\lambda_2} )h^{- (1-\frac{1}{\theta})} < \frac{1}{4}(M_1 \frac{C_{a_3}}{2} - C_{a_2} - C_{a_1} - \tilde{C}_{a_2, \lambda_2} 
	),
$$
$$
	KTC_Th^{-\sigma} + C_{T, \tilde{\Lambda}}h^{-\sigma} <\frac{K-C_{a_0}}{4}.
$$
With these choices we obtain that $Re\, \tilde{a}_2 \geq 0$, $Re\, (\tilde{a}_1 + t_1) + M_1 \frac{C_{a_3}}{2} \sqrt{2} \geq 0$ and $Re\, \tilde{a}_0 \geq 0$.
Let us also remark that the choices of $M_2, M_1$ and $k(t)$ do not depend of $\rho_1$ and $\theta$.

\section{Proof of Theorem \ref{theorem_main_theorem}}\label{section_proof_of_main_theorem}

Let us denote
\beqsn
\tilde {P}_{\Lambda}&:=&e^{\rho_1 \langle D \rangle^{\frac{1}{\theta}} } e^{\Lambda }(t,x,D) \,iP\, \{ e^{\rho_1 \langle D \rangle^{\frac{1}{\theta}} } e^{\Lambda}(t,x,D) \}^{-1}.
\eeqsn
By \eqref{finalP}, with the  choices of $M_2,M_1,k(t)$ in the previous section, we get
\beqsn
i\tilde P_{\Lambda}=\partial_{t} + ia_3 (t,D) + (Re\, \tilde{a}_2  + t_2)(t,x,D) +( \tilde{a}_{1} + t_1)(t,x,D)+ \tilde{a}_{0}(t,x,D) + \tilde{r}_{0}(t,x,D),
\eeqsn

with 
\begin{equation}
\label{lowerboundestimates}
Re\, \tilde{a}_2 \geq 0,\; Re\, (\tilde{a}_1 +t_1) + M_1 \frac{C_{a_3}}{2} \sqrt{2} \geq 0,\; Re\, \tilde{a}_0 \geq 0.
\end{equation}

Fefferman-Phong inequality applied to $Re\, \tilde{a}_2$ and sharp G{\aa}rding inequality applied to $\tilde{a}_1 + t_1+M_1 \frac{C_{a_3}}{2} \sqrt{2}$ and  $\tilde{a}_0$ give 
\beqsn
\Re\langle Re\, \tilde{a}_2(t,x,D) v,v\rangle&\geq& -c\|v\|_{L^2}^2,
\\
\Re\langle (\tilde{a}_1+t_1) (t,x,D)v,v\rangle&\geq& -\left(c+ M_1 \frac{C_{a_3}}{2} \sqrt{2}\right)\|v\|_{L^2}^2,
\\
\Re\langle \tilde{a}_0(t,x,D)v,v\rangle&\geq& -c\|v\|_{L^2}^2
\eeqsn
for a positive constant $c$. Now applying Gronwall inequality we come to the following energy estimate:
$$
\| v(t) \|^{2}_{L^2} \leq C 
\left( \| v(0) \|^{2}_{L^2} + \int_{0}^{t} \| (i\tilde P_{\Lambda} v(\tau) \|^{2}_{L^2} d\tau \right), t\in[0,T],
$$
for every $v(t,x) \in C^{1}([0,T]; \mathscr{S}(\R))$. By usual computations, this estimate provides well-posedness of the Cauchy problem associated with $\tilde P_{\Lambda}$ in $H^m(\R)$ for every $m=(m_1,m_2)\in\R^2$: for every $\tilde f\in C([0,T]; H^m(\R))$ and $\tilde g\in H^m(\R)$, there exists a unique $v \in C([0,T]; H^m(\R))$ such that $\tilde P_\Lambda v = \tilde f$, $v(0) = \tilde g$ and 
\beqs\label{eem}
\| v(t) \|^{2}_{H^m} \leq C\left( \|  \tilde g \|^{2}_{H^m} + \int_{0}^{t} \|  \tilde f(\tau) \|^{2}_{H^m} d\tau \right),\quad t\in [0,T].
\eeqs

Let us now turn back to our original Cauchy problem \eqref{equation_main_class_of_3_evolution_of_pseudo_diff_operators}, \eqref{Cauchy_problem_in_introduction}. Fixing initial data $f \in C([0,T], H^m_{\rho;s,\theta}(\R))$ and $g \in H^m_{\rho;s,\theta}(\R)$ for some $m, \rho \in \R^2$ with $\rho_2 >0$ and positive $s,\theta$ such that $\theta > s_0$, we can define $\Lambda$ as at the beginning of Section 6 with $\mu >1$ such that $s_0 > 2\mu-1$ and $M_1,M_2, k(0)$ such that \eqref{lowerboundestimates} holds. Then by Theorem \ref{theorem_continuity_infinite_order_in_gelfand_shilov_sobolev_spaces} we get
$$f_{\rho_1,\Lambda}:= e^{\rho_1 \langle D \rangle^{\frac{1}{\theta}} } e^{\Lambda }(t,x,D)f \in C([0,T], H^m_{(0,\rho_2-\bar \delta ); s, \theta}(\R))$$
and 
$$g_{\rho_1,\Lambda}:= e^{\rho_1 \langle D \rangle^{\frac{1}{\theta}} } e^{\Lambda }(0,x,D)g \in  H^m_{(0,\rho_2-\bar \delta ); s, \theta}(\R)$$
for every $\bar \delta >0$, because $1/(1-\sigma)>s$.
Since $\bar \delta$ can be taken arbitrarily small, we have that $f_{\rho_1,\Lambda} \in C([0,T], H^m)$ and $g_{\rho_1,\Lambda} \in  H^m$.
Hence the Cauchy problem
$$
\begin{cases}
 \tilde{P}_{\Lambda} v = f_{\rho_1,\Lambda} \\
v(0) = g_{\rho_1,\Lambda}
\end{cases}
$$
admits a unique solution $v \in C([0,T], H^m)\cap C^{1}([0,T], H^{m_1-3, m_2-1+1/\sigma)})$ satisfying the energy estimate \eqref{eem}.
Taking now $u= (e^{\Lambda(t,x,D)})^{-1} e^{-\rho_1\langle D \rangle^{1/\theta}}v$, 
we easily see that $u$ solves the Cauchy problem \eqref{Cauchy_problem_in_introduction}, it satisfies $$e^{\rho_1\langle D\rangle ^{1/\theta}}e^{K(T-t)\langle x\rangle_h^{1-\sigma}}e^{\tilde\Lambda(x,D)}u\in H^m(\R)$$ and it is the unique solution with this property. Namely, $
u \in C([0,T], H^m_{(\rho_1,-\tilde{\delta}); s, \theta}(\R)) \cap C^{1}([0,T], H^{(m_1-3,m_2)}_{(\rho_1 ,-\tilde{\delta}); s, \theta}(\R)) $ for every $\tilde{\delta}>0$.
Moreover, from \eqref{eem} we get
\begin{align*}\| u\|_{H_{(\rho_1,-\tilde{\delta}); s, \theta}^m} & \leq C \|v \|_{H^m} \leq C\left( \|   g_{\rho_1, \Lambda} \|^{2}_{H^m} + \int_{0}^{t} \| f_{\rho_1, \Lambda} (\tau) \|^{2}_{H^m} d\tau \right) \\
& \leq C\left( \|   g \|^{2}_{H^m_{\rho;s,\theta}} + \int_{0}^{t} \| f(\tau) \|^{2}_{H^{m}_{\rho;s,\theta}} d\tau \right).
\end{align*}
which gives \eqref{energy_estimate}
This concludes the proof. 
\qed

\begin{remark}
Notice that the argument of the proof of Theorem \ref{theorem_main_theorem}	and in particular the energy estimate  \eqref{energy_estimate} implies that the solution of the problem \eqref{Cauchy_problem_in_introduction} is unique 
in the space of all functions $u$ such that 
$$e^{\rho_1\langle D\rangle ^{1/\theta}}e^{K(T-t)\langle x\rangle_h^{1-\sigma}}e^{\tilde\Lambda(x,D)}u\in H^m (\R).$$
In general we cannot conclude that it is unique in $C([0,T], H^m_{(\rho_1, -\tilde{\delta}); s, \theta}(\R))$.
\end{remark}

\begin{remark}\label{xnelleadingterm}
In our main result we assume that the symbol of the leading term $a_3(t,D)$ is independent of $x$. This assumption is crucial in the argument of the proof. As a matter of fact, we observe that if $a_3$ depends on $x$, even allowing it to decay like $\langle x \rangle^{-m}$ for $m>>0$ , the conjugation of this term with the operator $e^{\rho_1 \langle D \rangle^{1/\theta}}$ would give
$$e^{\rho_1 \langle D \rangle^{1/\theta}} (ia_3(t,x,D)) e^{-\rho_1 \langle D \rangle^{1/\theta}} = ia_3(t,x,D_x) + \textrm{op }\left(\rho_1 \partial_\xi \langle \xi \rangle^{\frac{1}{\theta}}\cdot \partial_x a_3 \right) + \textrm{l.o.t}.$$
We observe that $$\rho_1 \partial_\xi \langle \xi \rangle^{\frac{1}{\theta}}\cdot \partial_x a_3 (t,x,\xi) \sim \langle \xi \rangle^{2+\frac1{\theta}} \langle x \rangle^{m-1}.$$
Hence since $2+\frac{1}{\theta} >2$, this remainder term could not be controlled by other lower order terms whose order in $\xi$ does not exceed $2$. Notice that this represents a difference in comparison with the $H^\infty$ frame where a dependence on $x$ in the leading term can be allowed by assuming a suitable decay assumption with respect to $x$, cf. \cite[Section 4]{ABtame}.
\end{remark}

\begin{remark}
With a major technical effort one can consider $3$-evolution equations in higher space dimension, that is for $x \in \R^n, n >1.$
At this moment, results of this type exist only for the case $p=2$, cf. \cite{ascanelli_cappiello_schrodinger_equations_Gelfand_shillov, CRJEECT, kajitani_baba_cauchy_problem_schrodinger_equations}. When passing to higher dimension, the main difficulty is the choice of the functions $\lambda_1, \lambda_2$ defining the change of variable, which must be chosen in order to satisfy certain partial differential inequalities, see also \cite[Section 4]{ABtame}. These may be non trivial for $p >2$. In this paper we prefer to restrict to the one space dimensional case both since the content is already quite technical and since the main physical models to which our results could be applied are included in this case.
\end{remark}

\noindent
\textbf{Acknowledgements.} The authors are grateful to the referees for their valuable and stimulating suggestions which helped them to improve the presentation of their results.


\end{document}